\documentclass[11pt,a4paper]{article}

\usepackage[hmargin={25mm,25mm},vmargin={25mm,25mm}]{geometry}

\usepackage{verbatim, ifpdf}
\usepackage{graphicx, color}
\usepackage{multirow}
\usepackage{amsthm}
\usepackage{amssymb, amsmath}
\usepackage{caption}
\usepackage{subcaption}
\usepackage{afterpage}
\usepackage{enumerate}
\usepackage{enumitem}
\usepackage{empheq}
\usepackage{scalerel}
\usepackage{multicol}
\usepackage{yhmath} 
\usepackage{stmaryrd}
\usepackage{anyfontsize}
\usepackage{wrapfig}
\usepackage{verbatim}
\usepackage{url}

\usepackage{bbm}
\usepackage{tikz}
\usetikzlibrary{calc}
\usetikzlibrary {decorations}
\usepackage{pgfplots}
\pgfplotsset{compat=1.14}
\usepackage{underscore}
\usepackage[hypertexnames=false]{hyperref}
\usepackage[capitalize]{cleveref}

\newcommand{\vc}[1]{\boldsymbol{#1}}

\newcommand{\ul}[1]{\underline{#1}}

\newcommand{\RR}{\mathbb{R}}

\newcommand{\PP}{\mathbb{P}}
\newcommand{\EE}{\mathbb{E}}

\newcommand{\eps}{\varepsilon}
\newcommand{\cat}[1]{\mathcal{#1}}

\newcommand{\mono}{\hookrightarrow}
\newcommand{\norm}[1]{\left\lVert#1\right\rVert}
\newcommand{\inner}[1]{\left\langle#1\right\rangle}
\newcommand{\abs}[1]{\left|#1\right|}

\newcommand{\numberthis}{\addtocounter{equation}{1}\tag{\theequation}}

\theoremstyle{definition}
\newtheorem*{theorem*}{Theorem}
\newtheorem{theorem}{Theorem}[section]
\newtheorem{lemma}[theorem]{Lemma}
\newtheorem{corollary}[theorem]{Corollary}
\newtheorem{conjecture}[theorem]{Conjecture}
\newtheorem{question}[theorem]{Question}
\newtheorem{definition}[theorem]{Definition}

\newtheorem{example}[theorem]{Example}
\newtheorem{notation}[theorem]{Notation}

\newtheorem*{remark}{Remark}

\theoremstyle{plain}
\newtheorem*{claim*}{Claim}

  \setlength{\parskip}{1.5ex plus 0.5ex minus 0.2ex}
  \setlength{\parindent}{0pt}
  \addtolength{\skip\footins}{1.5 mm}


\begin{document}
 \title{On the maximum degree of induced subgraphs of the Kneser graph}
 \author{Hou Tin Chau\footnote{School of Mathematics, University of Bristol, UK.\ Supported by an EPSRC\ Doctoral Studentship.},\, David Ellis\footnote{School of Mathematics, University of Bristol, UK},\, Ehud Friedgut\footnote{Department of Mathematics, Weizmann Institute of Science, Israel.}\,\, and\ Noam Lifshitz\footnote{Einstein Institute of Mathematics, Hebrew University of Jerusalem, Israel. Supported in part by ISF\ grant 1980/22.}}
\date{November 2024}
\maketitle

\begin{abstract}
For integers $n \geq k \geq 1$, the {\em Kneser graph} $K(n, k)$ is the graph with vertex-set consisting of all the $k$-element subsets of $\{1,2,\ldots,n\}$, where two $k$-element sets are adjacent in $K(n,k)$ if they are disjoint. We show that if $(n,k,s) \in \mathbb{N}^3$ with $n > 10000 k s^5$ and $\cat F$ is set of vertices of $K(n,k)$ of size larger than $\{A \subset \{1,2,\ldots,n\}:\ |A|=k,\ A \cap \{1,2,\ldots,s\} \neq \varnothing\}$, then the subgraph of $K(n,k)$ induced by $\cat F$ has maximum degree at least \[ \left(1 - O\left(\sqrt{s^3 k/n}\right)\right)\frac{s}{s+1} \cdot {n-k \choose k} \cdot \frac{\abs{\cat F}}{\binom{n}{k}}.\]
This is sharp up to the behaviour of the error term  $O(\sqrt{s^3 k/n})$. In particular, if the triple of integers $(n, k, s)$ satisfies the condition above, then the minimum maximum degree does not increase `continuously' with $\abs{\cat F}$. Instead, it has $s$ jumps, one at each time when $\abs{\cat F}$ becomes just larger than the union of $i$ stars, for $i = 1, 2, \ldots, s$. An appealing special case of the above result is that if $\mathcal{F}$ is a family of $k$-element subsets of $\{1,2,\ldots,n\}$ with $|\mathcal{F}| = {n-1 \choose k-1}+1$, then there exists $A \in \mathcal{F}$ such that $\mathcal{F}$ is disjoint from at least
$$\left(1/2-O\left(\sqrt{k/n}\right)\right){n-k-1 \choose k-1}$$
of the other sets in $\mathcal{F}$; we give both a random and 
a deterministic construction showing that this is asymptotically sharp if $k=o(n)$. In addition, it solves (up to a constant multiplicative factor) a problem of Gerbner, Lemons, Palmer, Patk\'os and Sz\'ecsi \cite{almostintersecting}.

Frankl and Kupavskii \cite{maxdegFK}, using different methods, have recently proven similar results under the hypothesis that $n$ is at least a quadratic in $k$.
\end{abstract} 
\pagebreak

\section{Introduction}

For $n \in \mathbb{N}$, we write $[n]: = \{1,2,\ldots,n\}$ for the standard $n$-element set, and for a set $X$, we write ${X \choose k}: = \{A \subset X:\ |A|=k\}$. The {\em Kneser graph} $K(n,k)$ is the graph $(V,E)$ with $V = {[n] \choose k}$ and $E = \{\{A,B\}:\ A \cap B = \varnothing\}$, i.e.\ two $k$-sets are joined by an edge of the Kneser graph iff they are disjoint. The Kneser graph $K(n,k)$ has independent sets of size ${n-1 \choose k-1}$, viz., the sets of the form $\left\{A \in {[n] \choose k}:\ i \in A\right\}$ for $i \in [n]$, and the well-known Erd\H{o}s-Ko-Rado theorem states that no independent set can have larger size. (Recall that a set $I$ of vertices in a graph $G$ is said to be {\em independent} if there is no edge of $G$ between any two of the vertices in $I$.)

Our work in this paper starts with the following question: if $\mathcal{F}$ is a set of vertices of $K(n,k)$ with size ${n-1 \choose k-1}+1$, how large must the maximum degree of the induced subgraph $K(n,k)[\mathcal{F}]$ be? (The Erd\H{o}s-Ko-Rado theorem implies that this maximum degree must be at least one.) Put another way, if $\mathcal{F}$ is a family of ${n-1 \choose k-1}+1$ $k$-element subsets of $[n]$, must there exist a set in $\mathcal{F}$ which is disjoint from many of the other sets in $\mathcal{F}$?

This question in turn was motivated by the beautiful (and very short) resolution of the sensitivity conjecture, by Huang \cite{sensitivity}. Huang proved that for any $n \in \mathbb{N}$, the subgraph of the hypercube $Q_n$ induced by a set of $2^{n-1}+1$ vertices must have maximum degree at least $\sqrt{n}$. (This is best-possible whenever $n$ is a perfect square, and it implies the sensitivity conjecture, as had previously been observed by Gotsman and Linial.) This result is striking, because $Q_n$ has independent sets of size $2^{n-1}$, but going just one above this size, forces the maximum degree of the induced subgraph to jump from $0$ to $\sqrt{n} = \sqrt{\Delta(Q_n)}$. Huang's proof is by the construction of a signed adjacency matrix for $Q_n$ which has appropriate eigenvalues, together with an application of Cauchy's interlacing theorem.

The following questions suggest themselves: is there a similar `sharp jump' for the Kneser graph, and if so, is there a proof of this along the lines of Huang's proof?

In this paper, we prove that there is indeed such a sharp jump for the Kneser graph (for $k \leq cn$ where $c>0$ is an absolute constant); though we were unable to find a proof along the same lines as Huang's (and our proof is a good deal longer). We prove the following.

\begin{theorem}
\label{thm:main}
Let $(n,k,s) \in \mathbb{N}^3$ with $n \geq 10000ks^5$. Let $\mathcal{F} \subset {[n] \choose k}$ such that $|\mathcal{F}| \geq {n \choose k}-{n-s \choose k}$, and suppose that $\mathcal{F}\neq \left\{A \in {[n] \choose k}:\ A \cap S \neq \varnothing\right\}$, for all $S \in {[n] \choose s}$. Then the subgraph of $K(n,k)$ induced by $\mathcal{F}$ has maximum degree at least
\[ \left(1 - O\left(\sqrt{s^3 k/n}\right)\right)\frac{s}{s+1} \cdot {n-k \choose k} \cdot \frac{\abs{\cat F}}{\binom{n}{k}},\]
i.e.\ there exists $A \in \mathcal{F}$ such that $A$ is disjoint from at least
\[ \left(1 - O\left(\sqrt{s^3 k/n}\right)\right)\frac{s}{s+1} \cdot {n-k \choose k} \cdot \frac{\abs{\cat F}}{\binom{n}{k}}\]
of the sets in $\mathcal{F}$.
\end{theorem}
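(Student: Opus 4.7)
I would argue by contradiction, in a two-step \emph{stability + witness} format. Set $\alpha := |\mathcal{F}|/\binom{n}{k}$ and suppose toward contradiction that
\[ \deg_{\mathcal{F}}(A) < (1-\eta)\,\tfrac{s}{s+1}\binom{n-k}{k}\alpha \qquad \forall A \in \mathcal{F}, \]
with $\eta$ a large constant times $\sqrt{s^3 k/n}$. Summing gives an upper bound on the number of disjoint pairs in $\mathcal{F}$, which I would feed into a Hilton--Milner--type stability theorem to conclude that $\mathcal{F}$ is Hamming-close to a single $s$-star $\mathcal{F}_T := \{A \in \binom{[n]}{k} : A \cap T \neq \varnothing\}$ for some $T \in \binom{[n]}{s}$, with quantitative control of $|\mathcal{F}_T \setminus \mathcal{F}|$. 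Then I would use the hypothesis $\mathcal{F} \neq \mathcal{F}_T$ to produce a single witness $A_0 \in \mathcal{F}\setminus\mathcal{F}_T$ whose degree already exceeds the hypothesised bound.

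The witness step is clean. Since $|\mathcal{F}| \geq |\mathcal{F}_T|$ and $\mathcal{F} \neq \mathcal{F}_T$, there exists $A_0 \in \mathcal{F}\setminus\mathcal{F}_T$, so $A_0\cap T = \varnothing$ and $T\subseteq[n]\setminus A_0$. The number of $k$-subsets of $[n]\setminus A_0$ that meet $T$ equals exactly $\binom{n-k}{k}-\binom{n-k-s}{k}$, and subtracting off the $k$-sets that lie in $\mathcal{F}_T$ but are missing from $\mathcal{F}$ gives
\[ \deg_{\mathcal{F}}(A_0) \;\geq\; \binom{n-k}{k} - \binom{n-k-s}{k} - |\mathcal{F}_T\setminus\mathcal{F}|. \]
At the threshold $\alpha\approx sk/n$, and $\binom{n-k}{k}-\binom{n-k-s}{k} \approx s\binom{n-k-1}{k-1}$, so the hypothesised upper bound on $\deg_{\mathcal{F}}(A_0)$ is $\approx (1-\eta)\tfrac{s^2}{s+1}\binom{n-k-1}{k-1}$. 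These differ by $\approx \tfrac{s}{s+1}\binom{n-k-1}{k-1}$; hence the contradiction is forced as soon as the stability step delivers $|\mathcal{F}_T\setminus\mathcal{F}| \lesssim \tfrac{s}{s+1}\binom{n-k-1}{k-1}$, with the small surplus absorbed into the $\eta$-error. It is reassuring that the \emph{excluded} case $\mathcal{F}=\mathcal{F}_T$ itself has maximum degree only $\approx (s-1)\binom{n-k-1}{k-1} < \tfrac{s^2}{s+1}\binom{n-k-1}{k-1}$, consistent with the theorem's hypothesis.

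The main obstacle is the stability step. Naive Hilton--Milner stability is far too loose in the regime $n\geq 10000ks^5$; instead I would invoke a quantitative junta or spread approximation in the spirit of Keller--Lifshitz and the Lifshitz school, likely as a two-stage refinement: first approximate $\mathcal{F}$ by a small junta using the few-disjoint-pairs assumption, then pin down the specific $s$-element core $T$ giving the correct constant $\tfrac{s}{s+1}$. Two additional subtleties would need careful handling. First, the approximating junta might identify a $t$-star with $t\neq s$: the case $t>s$ is impossible since $|\mathcal{F}|$ would be too large, but $t<s$ needs to be handled separately (though it should be easier, since $\mathcal{F}$ then contains many candidate witnesses outside the $t$-star, each yielding large degree). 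Second, the stability must be \emph{sharp in the constant} and not merely qualitative, since the arithmetic above closes only with the exact factor $\tfrac{s}{s+1}$; this tightness, rather than the witness step, is where I expect the bulk of the technical effort to lie.
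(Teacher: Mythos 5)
Your approach is genuinely different from the paper's. The paper decomposes $\mathbbm 1_{\cat F}$ into eigenvectors of the Kneser adjacency matrix, shows via a fourth-moment computation (\cref{gammamax}) that a large level-one component forces $\cat F$ to be dense in a single star, pushes that density up to approximately $\lambda/(s+1)$ using the Expander Mixing Lemma (\cref{extlem3}), iterates to locate $s+1$ dense stars (\cref{manylem3}), and finally computes the maximum degree by averaging an edge count over one of those stars (\cref{lowerbound}). The structural conclusion is that $\cat F$ looks like an $(s+1)$-junta with density $\lambda/(s+1)$ in each of $s+1$ unique-intersection sectors, and \emph{not} that $\cat F$ is Hamming-close to a union of $s$ stars; indeed the near-extremal family of \cref{explicitconstruction} is at distance $\Theta(|\cat F|/s)$ from every union of $s$ stars.

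The concrete quantitative gap in your sketch is the estimate $|\cat F_T\setminus\cat F|\lesssim\frac{s}{s+1}\binom{n-k-1}{k-1}$. For the near-extremal family with size parameter $\lambda$ just above $s$, the true value is $|\cat F_T\setminus\cat F|\approx\frac{s}{s+1}\binom{n-s-1}{k-1}$, and no stability theorem can do better than what the near-tight construction exhibits. Since
\[
\binom{n-s-1}{k-1}\Big/\binom{n-k-1}{k-1}=\prod_{i=0}^{k-s-1}\frac{n-s-1-i}{n-s-k-i}\approx e^{k(k-s)/n},
\]
the witness bound $\binom{n-k}{k}-\binom{n-k-s}{k}-|\cat F_T\setminus\cat F|$ loses a relative error of order $(e^{k^2/n}-1)/s \approx k^2/(sn)$, which exceeds the claimed $O(\sqrt{s^3k/n})$ once $k^3\gtrsim s^5 n$; under $n\geq 10000ks^5$ this is the regime $k\gtrsim s^5$, so already for $s=1$ and $k\geq 100$ the single-witness count is too lossy to give the stated error term. (At $k=\Theta(n)$ the loss is a constant factor, not a vanishing one.) To close the gap you would also need to count the neighbours of $A_0$ inside $\cat F\setminus\cat F_T$ and average over witnesses rather than take a single one, which effectively requires the $(s+1)$-junta description and pushes the argument towards the paper's multi-star averaging. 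Your acknowledgement that the stability step carries the technical weight is well placed, but as written the target of that stability and the accompanying witness arithmetic are not quite the right ones.
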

Our proof combines spectral techniques with more combinatorial `stability' arguments. Observe that the $s=1$ case of the above theorem has the following appealing corollary, which is a rough analogue of Huang's result on $Q_n$.

\begin{corollary}
\label{cor:main}
For each $\epsilon >0$, there exists $\delta >0$ such that the following holds. Suppose $(n,k) \in \mathbb{N}^2$ with $k \leq \delta n$. Let $\mathcal{F} \subset {[n] \choose k}$ with $|\mathcal{F}| = {n-1 \choose k-1}+1$. Then there exists $A \in \mathcal{F}$ such that $A$ is disjoint from at least $(1/2-\epsilon){n-k-1 \choose k-1}$ of the sets in $\mathcal{F}$.
\end{corollary}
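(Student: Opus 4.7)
The plan is to deduce this corollary directly from Theorem \ref{thm:main} by specialising to $s=1$ and then simplifying the resulting expression to the cleaner form $\tfrac12\binom{n-k-1}{k-1}$.

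First I would verify that the hypotheses of Theorem \ref{thm:main} hold with $s = 1$. Choosing $\delta \leq 1/10000$ gives $n \geq 10000k = 10000ks^5$. The size condition becomes $|\mathcal{F}| \geq \binom{n}{k}-\binom{n-1}{k} = \binom{n-1}{k-1}$, which is satisfied since $|\mathcal{F}| = \binom{n-1}{k-1}+1$. The non-structure condition is that $\mathcal{F}$ is not a star $\{A : i \in A\}$ for some $i \in [n]$; this holds trivially, since each star has size exactly $\binom{n-1}{k-1}$, which is strictly less than $|\mathcal{F}|$.

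Having applied Theorem \ref{thm:main}, I obtain $A \in \mathcal{F}$ disjoint from at least
\[ \bigl(1 - O(\sqrt{k/n})\bigr) \cdot \tfrac{1}{2} \cdot \binom{n-k}{k} \cdot \frac{|\mathcal{F}|}{\binom{n}{k}} \]
other sets of $\mathcal{F}$. Since $|\mathcal{F}| \geq \binom{n-1}{k-1}+1 > \binom{n-1}{k-1}$, and $\binom{n-1}{k-1}/\binom{n}{k} = k/n$, the above is at least
\[ \bigl(1 - O(\sqrt{k/n})\bigr) \cdot \tfrac{1}{2} \cdot \binom{n-k}{k} \cdot \frac{k}{n}. \]
A short calculation shows $\binom{n-k}{k} \cdot \tfrac{k}{n} = \tfrac{n-k}{n}\binom{n-k-1}{k-1} = (1 - k/n)\binom{n-k-1}{k-1}$, and so the lower bound becomes
\[ \bigl(1 - O(\sqrt{k/n})\bigr)(1 - k/n) \cdot \tfrac{1}{2}\binom{n-k-1}{k-1} = \bigl(1 - O(\sqrt{k/n})\bigr)\tfrac{1}{2}\binom{n-k-1}{k-1}. \]

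Finally, given $\epsilon > 0$, the factor $1 - O(\sqrt{k/n})$ exceeds $1 - 2\epsilon$ provided $k/n$ is smaller than some absolute constant times $\epsilon^2$. Choosing $\delta$ to be the minimum of $1/10000$ and this constant multiple of $\epsilon^2$, we conclude that the maximum degree is at least $(1/2 - \epsilon)\binom{n-k-1}{k-1}$, as required. There is no real obstacle here beyond being careful about the elementary manipulation of binomial coefficients and the choice of $\delta$; the entire content is packaged into Theorem \ref{thm:main}.
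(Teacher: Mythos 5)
Your deduction is exactly the one the paper intends (it simply says Corollary \ref{cor:main} is ``the $s=1$ case'' of Theorem \ref{thm:main} without writing out the arithmetic), and your verification of the hypotheses, the simplification $\binom{n-k}{k}\cdot\tfrac{k}{n}=(1-k/n)\binom{n-k-1}{k-1}$, and the choice of $\delta$ are all correct. The only thing worth noting is that the absorption of the extra $(1-k/n)$ factor into $1-O(\sqrt{k/n})$ relies on $k/n\le\sqrt{k/n}$, which of course holds since $k\le n$; otherwise the proof is clean and complete.
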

We note that, for $k = \Theta(n)$, this corollary says that when $|\mathcal{F}|$ increases from ${n-1 \choose k-1}$ to ${n-1 \choose k-1}+1$, the minimal maximum degree of the subgraph $K(n,k)[\mathcal{F}]$ jumps from zero to within a constant factor of $\Delta(K(n,k))$, which is (in a sense) an even more extreme jump than in Huang's theorem. (In the latter, the minimal maximum degree jumps from zero to $\sqrt{\Delta(Q_n)}$.)

The following construction shows that, up to the dependence of $\delta$ upon $\epsilon$, the above corollary is sharp. Take
$$\mathcal{F}=\left\{A \in {[n] \choose k}:\ \{1,2\} \subset A\right\} \cup \{A \cup \{1\}: A \in \mathcal{A}_1\} \cup \{A \cup \{2\}:\ A \in \mathcal{A}_2\},$$
where $\mathcal{A}_1$ and $\mathcal{A}_2$ are initial segments of the colex ordering on ${[n] \setminus \{1,2\} \choose k-1}$ with sizes as equal as possible. In the next section we give a random construction that achieves (asymptotically) the same bound.

We note that Frankl and Kupavskii \cite{maxdegFK}, using different methods, recently proved analogues of the above results under the (stronger) hypothesis that $k \leq c\sqrt{n}$ (for an absolute constant $c>0$).

A closely related problem (which has been more studied to date than our `maximum degree' problem), is to minimize the number of edges in subgraph of the Kneser graph induced by a family $\mathcal{F}\subset {[n] \choose k}$, over all families $\mathcal{F}$ of fixed size.

\begin{question}
        For each $n \geq k \geq 1$ and each $1 < m < {n \choose k}$, what is the minimum number of edges $e(K(n, k)[\cat F])$ over all $\mathcal{F} \subset {[n] \choose k}$ with $|\mathcal{F}|=m$? That is, if $|\mathcal{F}|=m$, how many (unordered) pairs $\{A, B\}$ can we guarantee to find such that $A,B \in \cat F$ and $A \cap B = \varnothing$?
\end{question}

Das, Gan and Sudakov \cite{edgenum} showed that, for sufficiently large $n$ and provided $m$ is not too large, to minimize the above quantity over all families of size $m$, it is best to take the $k$-subsets one by one according to the lexicographical order. (Recall that the {\em lexicographical order} or {\em lex order} $\prec$ on $\binom{[n]}{k}$ is defined by $A \prec B$ if $\min(A \Delta B) \in A$.)

\begin{theorem}[\cite{edgenum} Theorem 1.6]\label{edgenumthm}
    Let $A_1 \prec A_2 \prec \cdots \prec A_{\binom{n}{k}}$ be the lex order on $\binom{[n]}{k}$.
    If $n > 108 k^2 s (k+s)$ and $m \le \binom{n}{k} - \binom{n-s}{k} $, 
    then the initial segment $\{A_1, A_2, \ldots, A_m\}$ is a family that minimizes  $e(K(n, k)[\cat F])$ subject to $\abs{\cat F} = m$.
\end{theorem}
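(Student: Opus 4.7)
My approach would be to combine left-compression (shifting) with an exchange argument comparing $\cat{F}$ to the lex-initial segment $\cat{L}_m := \{A_1, \ldots, A_m\}$.

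\emph{Step 1 (Shifting).} First I would verify the standard fact that for any $i < j$, applying the shift $S_{ij}$ to $\cat{F}$ does not increase $e(K(n,k)[\cat{F}])$. Partitioning $\cat{F}$ according to membership of $\{i,j\}$, a short case analysis on how each pair $(A, B) \in \cat{F}^2$ transforms shows that every disjoint pair destroyed by the shift is charged off against an (already existing) disjoint pair. Iterating all shifts to stability, we may assume $\cat{F}$ is left-compressed.

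\emph{Step 2 (Exchange).} Now suppose $\cat{F}$ is left-compressed, $|\cat{F}| = m$, and $\cat{F} \ne \cat{L}_m$. Pick $A \in \cat{F} \setminus \cat{L}_m$ lex-maximal and $B \in \cat{L}_m \setminus \cat{F}$ lex-minimal; then $B \prec A$. Form $\cat{F}' := (\cat{F} \setminus \{A\}) \cup \{B\}$; the change in edge count is
\[ e(K(n,k)[\cat{F}']) - e(K(n,k)[\cat{F}]) \;=\; d_B - d_A, \]
where $d_X := |\{C \in \cat{F} \setminus \{A\} : C \cap X = \varnothing\}|$. The goal is to show $d_B \le d_A$; iterating the swap then pushes $\cat{F}$ to $\cat{L}_m$ without increasing the edge count. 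Note that after a swap the resulting family may not be left-compressed, so one either re-shifts and re-iterates, or chooses the swap carefully so as to preserve left-compressedness.

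\emph{Step 3 (Core inequality; main obstacle).} The heart of the proof is the inequality $d_B \le d_A$. Heuristically: since $B \prec A$, $B$ is concentrated on the small elements of $[n]$, so sets in $\cat{F}$ disjoint from $B$ must use predominantly large elements; left-compressedness of $\cat{F}$ together with the size bound $|\cat{F}| \le \binom{n}{k} - \binom{n-s}{k}$ forces such "all-large" sets to be rare in $\cat{F}$, since each one compresses down to many smaller sets that must also lie in $\cat{F}$. Conversely, $A$ has at least one large element, so sets in $\cat{F}$ disjoint from $A$ may live largely in the small part of $[n]$, where $\cat{F}$ is densely concentrated. I would make this rigorous by constructing an explicit injection
\[ \Phi : \{C \in \cat{F} : C \cap B = \varnothing\} \hookrightarrow \{C \in \cat{F} : C \cap A = \varnothing\} \]
that sends $C$ to the set obtained by shifting the elements of $C \cap A$ down to $B \setminus A$ via a canonical bubble-sort-style rule. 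Left-compressedness of $\cat{F}$ is what ensures $\Phi(C) \in \cat{F}$; injectivity is by construction. The main obstacle is handling "collision" elements of $C$ that already lie in $B \setminus A$: these must be relocated inside $[n] \setminus (A \cup B)$ without re-entering the image. This is exactly where the quantitative hypothesis $n > 108\,k^2 s(k+s)$ enters, providing enough slack in $[n] \setminus (A \cup B)$ (which has size $\ge n - 2k$) to absorb the collisions with room to spare, so that the main-term inequality $d_B \le d_A$ survives the accounting of error terms.
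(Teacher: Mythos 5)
This statement is not proved in the paper under review; it is quoted from Das, Gan and Sudakov (\cite{edgenum}, Theorem 1.6) and used as a black box, so there is no ``paper's own proof'' to compare against. Evaluating your plan on its own terms: Step~1 (shifts do not increase the number of disjoint pairs) is a correct and standard lemma, and the overall shift-then-exchange architecture is a sensible starting point and is in fact broadly the architecture of the Das--Gan--Sudakov argument. The genuine gap is in Step~3.

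The injection $\Phi$ is exactly where the work lies, and as sketched it does not go through. Left-compressedness of $\cat F$ only guarantees closure under \emph{downward} replacements: if $F \in \cat F$, $j \in F$, $i \notin F$, $i<j$, then $(F\setminus\{j\})\cup\{i\} \in \cat F$. Your map $\Phi$ shifts the elements of $C\cap A$ down to $B\setminus A$, which is fine when there are no collisions, but in the collision case (some $b\in B\setminus A$ already lies in $C$) your proposed fix is to relocate those elements into $[n]\setminus(A\cup B)$. That relocation generically moves an element \emph{upward}, and left-compressedness gives no guarantee whatsoever that the resulting set remains in $\cat F$. There is also no argument for injectivity once the collision rule is in play: distinct $C$'s with different collision patterns can land on the same target unless the rule encodes the preimage explicitly, which has not been done. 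So the core inequality $d_B\le d_A$ is asserted but not established, and this is the entire content of the theorem.

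Two further issues. First, after a single swap the family is typically no longer left-compressed; re-shifting and re-swapping can in principle cycle, and you do not give a potential function (e.g.\ a lex-rank sum or a two-dimensional ordering) that decreases strictly under the combined operation, which is needed to terminate. Second, the hypothesis $m\le\binom{n}{k}-\binom{n-s}{k}$ must be used structurally, not merely to ``absorb collisions'': for $m$ larger than this threshold the lex initial segment is \emph{not} the minimizer (clique-type families $\binom{[t]}{k}$ take over, as Das--Gan--Sudakov discuss), so any proof that does not lean hard on the size bound would prove something false. Your sketch gestures at the bound (``such all-large sets are rare'') but makes no quantitative use of it, and likewise the condition $n>108k^2s(k+s)$ plays a sharper role than merely providing slack in $[n]\setminus(A\cup B)$ --- it is what rules out the competing clique extremizer. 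As written, the proposal is a plausible outline but the key inequality remains unproved.
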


The lower bound in Theorem \ref{thm:main}, combined with a general construction we will shortly describe (see (\ref{upperbound}) in \cref{explicitconstruction}), shows that for $n \ge 10000 ks^5$, a plot of $\min \Delta (\cat F)$ against $\abs{\cat F}$ looks roughly as in \cref{roughplot}.

We note that the $n = \Theta (k)$ regime is particularly interesting because the Kneser graph has edge-density exponentially small in $n$, in this regime, making it `more surprising' in a sense that the maximum degree of an induced subgraph of size greater than a star, is as large as we demonstrate. Indeed, the edge-density of the Kneser graph is roughly $\left.\binom{n-k}{k} \middle/\binom{n}{k}\right. \approx \left(1-\frac{k}{n}\right)^k$, so in the $n = \Omega(k^2)$ regime, the density is bounded away from zero by an absolute positive constant, whereas for $n = \Theta(k)$ the density is exponentially small in $n$, i.e., two uniformly random subsets of $[n]$ of size $k$ will intersect with `very high probability'. 

Our results also solve (up to a constant multiplicative factor) a problem of Gerbner, Lemons, Palmer, Patk\'os, and Sz\'ecsi, who in \cite{almostintersecting} asked the following.
\begin{question}[\cite{almostintersecting} Section 4, `seems to be an interesting problem']
\label{almostinersectingquestion}
Let $k,l \in \mathbb{N}$. We say $\cat F \subseteq \binom{[n]}{k}$ is $(\le l)$-\emph{almost intersecting} if $\Delta(K(n, k)[\cat F]) \le l$. What is the smallest $n_0 = n_0(k,l) \in \mathbb{N}$ such that for all $n \ge n_0$, the largest $(\le l)$-almost intersecting families are just the stars $\cat D_i$ of size $\binom{n-1}{k-1}$?
\end{question}

Gerbner, Lemons, Palmer, Patk\'os, and Sz\'ecsi \cite{almostintersecting} showed that $n_0 = \min\{ O(k^3 + kl),  O(k^2 l)\}$
 is enough. Corollary \ref{cor:main} implies that for $n \leq ck $ (for $c>0$ a sufficiently small absolute constant), we have 
 \[\abs{\cat F} \ge \binom{n-1}{k-1} + 1 \implies \Delta(K(n, k)[\cat F]) \ge 0.49 \binom{n-k-1}{k-1}.\] 
 In particular the right-hand side is greater than $\left(\frac{n}{2k}\right)^{k-1}$, so $n = \Omega\left(k l^{1/(k-1)}\right) $ is enough to guarantee that the maximum degree is larger than $l$, so any family of size $\binom{n-1}{k-1}+1$ cannot be $(\le l)$-almost intersecting. 
 The same method works for families of size $\binom{n-1}{k-1}$ that are not stars, as we shall see in \cref{equalitycase}.
 This shows that the minimum $n_0$ in  \cref{almostinersectingquestion} satisfies $n_0 = \Theta \left(k l^{1/(k-1)}\right)$, since if $n = \lfloor k l^{1/(k-1)} / 100 \rfloor$, any family $\cat F \subset \binom{n}{k}$ with $\abs{\cat F} = \binom{n-1}{k-1} + 1$ trivially has 
 \[\Delta(\cat F) \le \binom{n-1}{k-1} \le \frac{n^{k-1}}{(k-1)!} \le \frac{n^{k-1}}{(k/10)^{k-1}} < l. \]
 \newline

Before discussing our strategy for proving Theorem \ref{thm:main}, it is convenient to introduce the following notation. A {\em star} $\cat D_i$
 is a subset of ${[n] \choose k}$ of the form $\left\{A \in {[n] \choose k}:\ i \in A\right\}$, for some $i \in [n]$. The families $\left\{A \in {[n] \choose k}:\ A \cap S \neq \varnothing\right\}$, for $S \in {[n] \choose s}$, appearing in the statement of Theorem \ref{thm:main}, are precisely unions of $s$ distinct stars.

 Note that a union of $s$ stars has size $\binom{n}{k} - \binom{n-s}{k}$. We interpolate linearly between integer values of $s$, using a `size parameter' $\lambda$ to describe the size of $\cat{F}$, so that if $\mathcal{F}$ has size parameter $\lambda$, then its size is the same as that of a union of $\lambda$ stars, when $\lambda \in \mathbb{N}$. 
 
 \begin{notation}[Size parameter]
For $s \le \lambda \le s+1$, we say $\cat F$ {\em has size parameter $\lambda$} if 
\begin{align*}
    \abs{\cat F} &= \binom{n}{k} - \binom{n - s }{k} + (\lambda - s ) \binom{n - s - 1}{k-1} \\ 
    &=  (\lambda -s) \binom{n - s - 1}{k-1} + \sum_{i=1}^{s} \binom{s}{i}\binom{n - s}{k-i}. \numberthis\label{Fsize}
\end{align*}
Equivalently,
\begin{align} \abs{\cat F} 
&= \binom{n}{k} - \binom{n - s - 1}{k} + (\lambda - s - 1) \binom{n - s - 1}{k-1} \nonumber \\
&= \lambda \binom{n - s - 1}{k-1} + \sum_{i=2}^{s+1}\binom{s+1}{i} \binom{n - s- 1}{k-i}.  \nonumber
\end{align}
where $s \le \lambda \le s+1$.
\end{notation}
We note, for later, that if $\cat F$ has size parameter $\lambda$, then 
\begin{equation}
\label{crudesize}
\lambda \binom{n - 1}{k - 1} - \binom{s+1}{2} \binom{n - 2}{k - 2} \le \abs{\cat F} \le \lambda \binom{n - 1}{k - 1}. 
\end{equation}
(This can be verified just by checking the cases $\lambda = s$ and $\lambda = s+1$, since all the quantities above are affine linear functions of $\lambda$.)

\begin{figure}[h]\centering
\begin{tikzpicture}[scale = 2, decoration={zigzag, segment length=2mm}]
\draw[blue, line width = 1mm] (0, 0) --(1, 0);
\filldraw[blue] (1, 0) circle (1pt);

\draw[help lines, color=gray!30, dashed] (-0.1, -0.1) grid (5.2,4.2);
\draw[->, thick] (0,0)--(5.2,0) node[right]{$\abs{\cat F}/\binom{n-1}{k-1}$};
\draw[->, thick] (0,0)--(0,4.2) node[above]{$\min \Delta(\cat F) / \binom{n-k-1}{k-1}$};

  \foreach \a in {1,2,...,4} {
  \pgfmathparse{\a * \a / (\a + 1)}
    \coordinate (begin\a) at  (\a ,  \pgfmathresult) { };
    \coordinate (end\a) at (\a + 1, \a) { };
    \draw[blue, ultra thick, decorate] (begin\a) -- (end\a) ;
    \filldraw[blue] (end\a) circle (1 pt);
    \draw[blue] (begin\a) circle [radius = 3pt]; 

      \draw[smooth, domain={\a:\a + 1}, red, variable=\x] plot ({\x}, {(\a * (\a - 1) / 2 + (\x - \a) * \a) * 2 /\x});

  }

\node[label={[label distance=0.3cm]below:$1$}] at (1, 0) {};
 \foreach \a in {2,3,4,5} {
    \node[label={[label distance=0.3cm]below:$\approx \a$}] at (\a, 0) {};
  }

  \node[label={[label distance=0.3cm]left:$1$}] at (0, 1) {};
 \foreach \a in {2,3,4} {
    \node[label={[label distance=0.3cm]left:$\approx \a$}] at (0, \a) {};
  }
\end{tikzpicture}
\caption{Rough sketch of $\min \Delta (\cat F)$ against $\abs{\cat F}$ when $n$ is sufficiently large. Zigzag and hollow circles are schematic for a gap (not up to scale and depends on the ratio $p = k/n$) between the lower bound \cref{lowerbound} and the upper bound (\ref{upperbound}). The thin red curve is the average degree of the initial segment of lex ordering, corresponding to the lower bound one gets from applying Theorem \ref{edgenumthm}.}
\label{roughplot}
\end{figure}
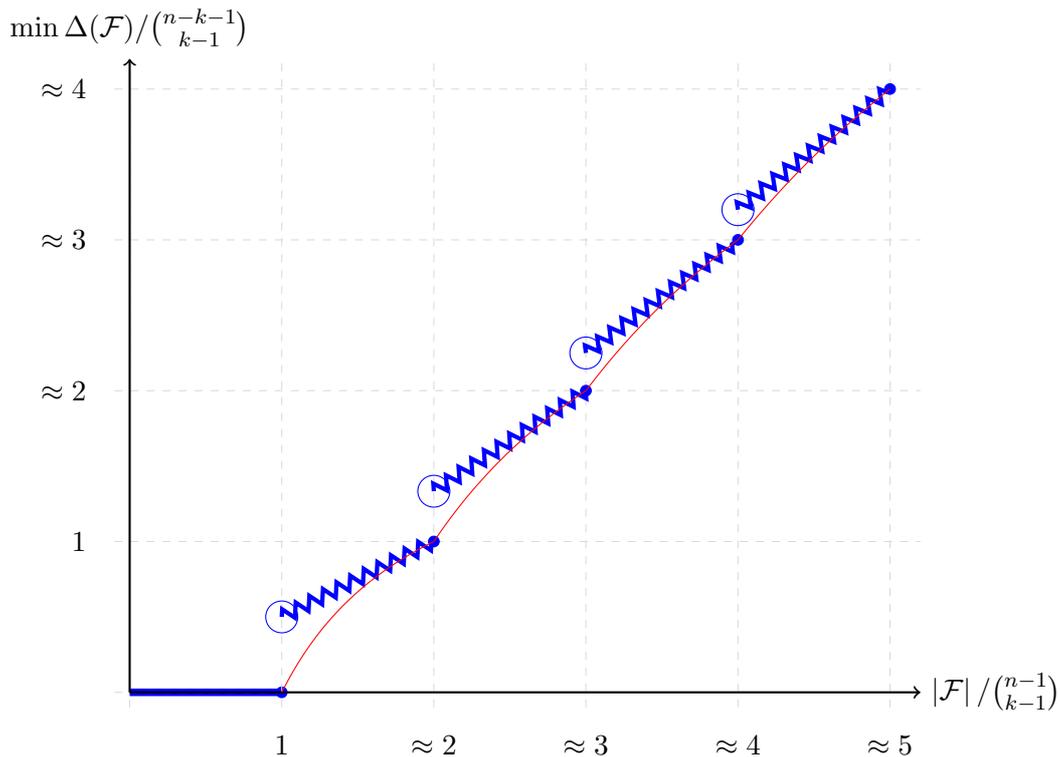

\subsection*{Proof strategy}
The key idea in the proof of Theorem \ref{thm:main} is to show that, if $\mathcal{F}$ has size parameter $\lambda$ with $s \leq \lambda \leq s+1$ and the maximum degree of $\cat F$ is `small', then in fact we can find $s+1$ stars that cover most of $\cat F$ (\cref{manylem3}). 

Suppose $\mathcal{F}$ has size parameter $\lambda$ with $s \leq \lambda \leq s+1$. Since the size of $\cat F$ is very roughly $\lambda$ times that of a star, 
this will require finding stars in which $\cat F$ has density roughly $\lambda/(s+1)$. There are $n$ stars, and each $k$-set is contained in exactly $k$ of them, so the obvious averaging argument only gives a star where $\cat F$ has density $\lambda k/n$, which is not enough. 

Instead, we use the following strategy. Consider the indicator function $f = \mathbbm 1_{\cat F}:\binom{[n]}{k} \to \{0, 1\}$. This can also be viewed as a vector in $\RR^{\binom{[n]}{k}}$. We can decompose it into a sum of eigenvectors of the (adjacency matrix of the) Kneser graph (these eigenvectors are given in \cref{singvalkneser}), say as 
\[f = \alpha \mathbbm 1_{\binom{[n]}{k}} + f_1 + \cdots + f_k\]
where $\alpha = \EE(f)$ and $f_i$ is an eigenvector of the Kneser graph with corresponding eigenvalue equal to $(-1)^i \binom{n-k-i}{k-i}$.

We show that, if $f_1$ has small $L^2$-norm, then the number of edges of $K(n,k)$ within $\mathcal{F}$ is close to that in a random subset (this is the content of \cref{thirdorder}), so the average degree is large. Otherwise, $f_1$ has large $L^2$-norm (larger than constant times $\norm{f}_2$), and this must be due to $\mathcal{F}$ having constant density inside some star (see \cref{lincompduetostar}). And whenever $\mathcal{F}$ has constant density inside some star, the Expander Mixing Lemma can be used to show that there are many edges of $K(n,k)[\mathcal{F}]$ between this star and its complement. This in turn enables us to show that the density of $\mathcal{F}$ inside the relevant star is close to $1/2$ (in the setting of Corollary \ref{cor:main}), or close to $\lambda/(s+1)$ (in the general case); this is the content of \cref{extlem3}. And this is enough to conclude the proof.


\section{Upper bounds, and (more) general conjectures}
In this section, we describe two constructions of induced subgraphs of $K(n, k)$ with small maximum degree, for any given size parameter $\lambda \in [s, s+1]$ such that the quantity in (\ref{Fsize}) is an integer. The first is a random construction, and the second is explicit. The upper bound given by the second construction will be used later; the first construction will not be used later, but we include it as it provides a larger class of families for which Theorem \ref{thm:main} is asymptotically sharp. It is also very interesting that a random construction gives almost the same (almost sharp) bound as a deterministic one! 

\label{constructions}
\begin{example}[Random construction]
\label{exam:rand}
    Fix $k,s \in \mathbb{N}$. For $n \geq C_0ks^3$ (for an appopriate absolute constant $C_0>0$), we take $\mathcal{F} \subset {[n] \choose k}$ to be the (random) family that is the disjoint union of:
    \begin{enumerate}
        \item the ``core part'', which consists of all sets $X \in {[n] \choose k}$ with $\abs{X \cap [s + 1]}\ge 2$; and 
        \item the ``random part'' where for each set $X \in {[n] \choose k}$ with $\abs{X \cap [s + 1]} = 1$, we include $X$ with probability $\lambda/(s+1)$ (independently, for each such $X$).
    \end{enumerate}
The size of core part is always 
    \[\sum_{i=2}^{s+1}\binom{s+1}{i} \binom{n - s- 1}{k-i},\] and the size of the random part follows a binomial distribution, whose expected value is \[\lambda \binom{n - s - 1}{k-1}.\] 
    Hence, $\mathbb{E}[|\mathcal{F}|]$ is equal to 
    \begin{equation}\label{eq:desired-size}\lambda \binom{n - s - 1}{k-1} + \sum_{i=2}^{s+1}\binom{s+1}{i} \binom{n - s- 1}{k-i},\end{equation}
    the expression that appears in (\ref{Fsize}). When the mean of a binomial distribution is an integer, the mean coincides with the median (an elementary proof of this can be found in \cite{median}), so with probability at least $1/2$, the family $\cat F$ will have size at least (\ref{eq:desired-size}). 

    Now we bound the maximum degree of the induced subgraph $K(n,k))[\mathcal{F}]$. First consider vertices in the random part. Let $X$ be a vertex in the random part. We may assume $X \cap [s + 1] = \{1\}$. In the Kneser graph, this vertex $X$ has $\binom{n-k-s}{k-1}$ neighbours $Y \in \cat D_i$ ($i = 2, 3, \ldots, s+1$) such that $\abs{Y \cap [s+1]} = 1$, so we expect $\frac{\lambda}{s+1} \binom{n-k-s}{k-1}$ such neighbours in $\cat F \cap \cat D_i$, so $\frac{s\lambda}{s+1} \binom{n-k-s}{k-1}$ such neighbours in the random part of $\cat F$. We also need to take into account the neighbours of $X$ in the core part,
    so the degree of $X$ in $\cat F$ has expectation 
    \begin{equation} \frac{s\lambda}{s+1}\binom{n - k - s}{k-1} + \sum_{i=2}^{s} \binom{s}{i}\binom{n-k-s}{k-i}. \label{ex:meandeg}\end{equation} 

    It turns out that any core vertex $X$, say with $\abs{X \cap [s + 1]} = x \ge 2$, will have smaller degree than (\ref{ex:meandeg}). Indeed, if $s=1$ then such a vertex $X$ has degree zero, whereas for $s>1$ its degree satisfies
    \begin{align*} d_X & \leq \sum_{j=1}^{s+1-x}\abs{\left\{Y \in {[n] \choose k}:\ Y \cap X = \varnothing,\ \abs{Y \cap [s+1]}=j\right\}}\\
    & = \sum_{j=1}^{s+1-x} {s+1-x \choose j} {n-k-s-1+x \choose k-j}.\end{align*}
    Provided $n \geq C_0ks^3$ for some appropriate absolute constant $C_0$, the ratio between successive terms in the sum on the right-hand side is at most $1/(100 s^2)$, and comparing the leading term with that of (\ref{ex:meandeg}) we see that
    \begin{align*}
    \frac{(s+1-x){n-k-s-1+x \choose k-1}}{(s\lambda/(s+1)){n-k-s \choose k-1}} & \leq
    \frac{s^2-1}{s^2} \frac{{n-k \choose k-1}}{{n-k-s \choose k-1}}\\
    & \leq \frac{s^2-1}{s^2} \left(1+\frac{k-1}{n-2k-s+2}\right)^s\\
    &\leq \frac{s^2-1}{s^2}\left(1+\frac{4k}{n}\right)^s\\
    & \leq \frac{s^2-1}{s^2}e^{4ks/n}\\
    & \leq \left(1-1/s^2\right)(1+8ks/n)\\
    & \leq \left(1-1/s^2\right)\left(1+1/\left(2s^2\right)\right)\\
    & \leq 1-1/\left(2s^2\right),
    \end{align*}
 so $d_X$ is smaller than 
    \[\frac{s\lambda}{s+1} \binom{n - k - s}{k-1}\]
    provided $n \geq C_0ks^3$.

    Finally we need to ensure that with probability greater than $1/2$, the maximum degree is not far from the expected degree (\ref{ex:meandeg}).

    If $\abs{X \cap [s + 1]} = 1$, then the number of neighbours of $X$ included in the random part of $\cat F$ is a binomially distributed variable $B \sim \operatorname{Bin}\left(s\binom{n-k-s}{k-1}, \frac{\lambda}{s+1}\right)$. By the Chernoff bound given in Theorem A.1.4 of \cite{chernoff}, 
    \begin{align*} \PP \left(B > s\binom{n-k-s}{k-1} \frac{\lambda}{s+1} (1+\delta) \right)& < \exp\left(-2 s\binom{n-k-s}{k-1} \left(\frac{\lambda}{s+1}\right)^2 \delta^2 \right) \\
    &
    \le \exp\left(-\frac{s}{2} \binom{n-k-s}{k-1} \delta^2 \right).  \end{align*}
    Taking the union bound over all such vertices $X$ (there are $(s+1) \binom{n-s-1}{k-1}$ of them), we have 

    \begin{align*}&\phantom{ \le 1} \PP\left( \exists X\in \binom{[n]}{k} \text{ (not necessarily in } \cat F) \right. \text{ such that } \abs{X \cap [s+1]} = 1 \\ 
    &\left.\hspace{2.5em}\text{ and } X \text{ has more than }  \frac{s \lambda}{s+1} \binom{n-k-s}{k-1} (1+\delta) \text{ neighbours in the random part of } \cat F\right) \\
    &\le (s+1)\binom{n-s-1}{k-1} \exp \left(-\frac{s}{2} \binom{n-k-s}{k-1} \delta^2\right).
    \end{align*}
 So if we pick $\delta = \left(s \binom{n-k-s}{k-1}\right)^{-1/2+\eps} $ for some constant $\eps > 0$, then for $n \geq C_0(k+s)$ for an appropriate absolute constant $C_0>0$, the probability of having some $X$ with such large degree in the random part of $\cat F$ is less than $1/2$. In other words, with probability more than $1/2$ the maximum degree of $\cat F$ is not far from the expected degree (\ref{ex:meandeg}) and satisfies 
    \begin{align*} \Delta(\cat F) &\le \frac{s\lambda}{s+1}\left(1+ \delta \right)\binom{n - k - s}{k-1}  + \sum_{i=2}^{s} \binom{s}{i}\binom{n-k-s}{k-i} \\
&\le \left(\frac{s \lambda}{s+1} + s \cdot \delta\right)\binom{n - k - 1}{k-1}. 
\end{align*}
(Note that for the last inequality, the identity
\begin{equation}\binom{n-k-1}{k-1} = \sum_{i=0}^{s-1}\binom{s-1}{i} \binom{n-k-s}{k-i-1}
\numberthis\label{eq:venn}\end{equation}
    is used.)
    
So with positive probability, $\abs{\cat F}$ is large enough but $\Delta (\cat F)$ is not too large, and we can remove any extra vertices from $\cat F$ to obtain a family of size parameter $\lambda$ and maximum degree at most \[\left(\frac{s \lambda}{s+1} + s \cdot \delta\right)\binom{n - k - 1}{k-1}.\]

\end{example}

\begin{example}[Explicit construction]
\label{explicitconstruction}
Let $(n,k,s) \in \mathbb{N}^3$ with $n \ge 12ks$. Construct $\mathcal{F} \subset {[n] \choose k}$ as follows. For each $X \in {[n] \choose k}$ with $\abs{X \cap [s + 1]} = 1$, include $X$ in $\cat F$ if and only if 
$X \subseteq [t]$, where $t\le n$ is the least integer that satisfies 
\[\binom{t - s -1}{k - 1} \ge \frac{\lambda}{s+1}  \binom{n - s -1}{k - 1}. \]
We also include in $\mathcal{F}$ all those sets $X \in {[n] \choose k}$ with $\abs{X \cap [s + 1]} \ge 2$ (the ``core part''). One can easily check that $\cat F$ has size-parameter at least $\lambda$.

Note that $\lambda/(s+1) \ge 1/2$, so 
\begin{equation} t-s-1 \ge \frac{1}{2}  (n-s-1)> \frac{1}{3} n.
\label{bislarge}
\end{equation}
Define $p:=k/n$, and note that $p \leq 1/(12s)$. By choice of $t$, we have
\begin{align*}\binom{t - s - 1}{k - 1} \ge \frac{\lambda}{s+1}  \binom{n - s -1}{k - 1} &> \binom{t - s - 2}{k - 1} \\
&=\binom{t - s - 1}{k - 1} \cdot \frac{t-s-k}{t- s - 1} \\
&=\binom{t - s - 1}{k-1} \cdot \left(1 - \frac{k-1}{t-s-1}\right) \\
&> \binom{t - s - 1}{k-1} (1-3p) & \text{ by (\ref{bislarge}).}
\end{align*}
Fix $X\in \cat F$ with $X\cap [s+1] = \{1\}$. The number of sets $Y\in \cat F$ such that $Y\cap [s+1] = \{2\}$ and $Y \cap X = \emptyset$ is equal to
\[ \binom{t - s - k }{k - 1}. \]
We have
\begin{align*} \left.\binom{t - s - k }{k - 1} \middle/ \binom{n - s - k}{k-1}\right. & <
\left.\binom{t - s - 1 }{k - 1} \middle/ \binom{n - s - 1}{k-1}\right.\\
& < \frac{\lambda}{s+1}\cdot \frac{1}{1-3p}\\
&\le (1+4p) \frac{\lambda}{s+1},\end{align*}
using the fact that $p \le 1/12$. Therefore, the degree $d_X$ of $X$ (also taking into account its neighbours in the core part) satisfies
\begin{align*} 
d_X 
&= s \binom{t - s - k }{k - 1} + \sum_{i=2}^{s} \binom{s}{i}\binom{n-k-s}{k-i}\\
&< \frac{s\lambda}{s+1} (1+4p) \binom{n - k - s}{k-1}  + \sum_{i=2}^{s} \binom{s}{i}\binom{n-k-s}{k-i} \\
&\le \left(\frac{s\lambda}{s+1} + 4sp\right) \binom{n-k-1}{k-1}. \numberthis \label{upperbound}
\end{align*}
(Note that the last inequality uses \cref{eq:venn} again.)
It is clear that $d_X$ is the maximum degree of $\cat F$ as any core vertex $Z\in \cat F$ with $\abs{Z \cap [s+1]}\ge 2$ has smaller degree, similarly to in the previous example. We can remove elements of $\cat F$ if there are too many (clearly this does not increase $\Delta(\cat F)$), so as to obtain exactly the right size-parameter.
\end{example}

This construction is motivated by the fact that picking sets in each star according to the colex order makes sets in different stars intersect more often and is therefore helpful for reducing the maximum degree of the induced subgraph. We conjecture that to minimize the maximum degree among vertex sets of the same size, we can take families similar to those in \cref{explicitconstruction}. To state our conjecture, we need one more piece of notation.

\begin{notation}
\label{techset}
    For a family $\cat F\subseteq \binom{[n]}{k}$ and $I \subseteq J \subseteq [n]$, we write
    \[\cat F^I_J = \{F \setminus J: F\in \cat F, F \cap J = I\} \subseteq \binom{[n]\setminus J}{k - \abs{I}}.\] 
    Its elements correspond to elements of $\cat F$ that intersect with $J$ in the prescribed way.
\end{notation}

\begin{conjecture}[Sparse minimizer]
\label{conj:sparse}
Suppose $n$ is sufficiently large compared to $k$ and $s$, and $s \le \lambda \le s+1$. then there exists a minimizer $\cat F \subseteq \binom{[n]}{k}$ with size parameter $\lambda$ (whose maximum degree $\Delta(\cat F)$ is minimum among all such families of the same size) with the following properties:
\begin{enumerate}[label=(\roman*)]
    \item \label{contained in stars}$\cat F \subseteq \cat D_1 \cup \cat D_2 \cup \cdots \cup \cat D_{s+1}$;
    \item \label{contains intersections}$\cat D_i \cap \cat D_j \subseteq \cat F$ for all $i, j \in [s+1]$; and
    \item \label{almost equal}$\abs{\cat F_{[s+1]}^{\{i\}} \mathbin{\triangle}\cat F_{[s+1]}^{\{j\}}} \le 1 $ for all $i, j \in [s+1]$.
\end{enumerate}
\end{conjecture}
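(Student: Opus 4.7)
The plan is to combine Theorem \ref{thm:main} with a sequence of size-preserving local modifications that do not increase $\Delta$, starting from an arbitrary minimizer and converging to one satisfying \ref{contained in stars}--\ref{almost equal}. By Theorem \ref{thm:main} and the matching upper bound \eqref{upperbound}, any minimizer $\cat F$ of size parameter $\lambda \in [s,s+1]$ must satisfy $\Delta(\cat F) = \bigl(\tfrac{s\lambda}{s+1} + O(\sqrt{s^3 k/n})\bigr)\binom{n-k-1}{k-1}$; moreover the proof of Theorem \ref{thm:main} (via \cref{manylem3}) shows that $\cat F$ is almost contained in the union of $s+1$ stars, which after relabelling we call $\cat D_1, \ldots, \cat D_{s+1}$, and that $\cat F$ meets each of these stars in density close to $\lambda/(s+1)$.

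To establish \ref{contained in stars}: once the density of $\cat F$ is close to $\lambda/(s+1)$ in each of the chosen stars, any candidate $X \in \cat F$ with $X \cap [s+1] = \varnothing$ receives contributions to its $\cat F$-degree from all $s+1$ stars and thus has degree approximately $\lambda\binom{n-k-1}{k-1}$, strictly exceeding $\Delta(\cat F) \approx \tfrac{s\lambda}{s+1}\binom{n-k-1}{k-1}$ by a factor of $(s+1)/s$. Hence a minimizer can contain at most $O(\sqrt{s^3k/n})\binom{n-1}{k-1}$ such $X$; these remaining stragglers are then removed by Frankl-type shifts $S_{ji}$ with $i \in [s+1]$ and $j \in [n] \setminus [s+1]$, which preserve $\abs{\cat F}$ and, since shifts preserve disjointness pointwise, weakly decrease $\Delta$ on the Kneser graph induced subgraph.

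For \ref{contains intersections}, suppose $Y \in \cat D_i \cap \cat D_j \setminus \cat F$ for some $i,j \in [s+1]$; pick some $X \in \cat F_{[s+1]}^{\{i\}}$ chosen extremally (e.g., with $X \setminus \{i\}$ lying latest in colex order on $\binom{[n] \setminus [s+1]}{k-1}$). Every neighbour of $Y$ in $\cat F$ must avoid both $i$ and $j$, so it is also a neighbour of $X$; hence $\deg_{\cat F}(Y) \le \deg_{\cat F}(X) \le \Delta(\cat F)$. Performing the swap $X \mapsto Y$, the degree of a third vertex $W \in \cat F$ changes only if $W$ is disjoint from exactly one of $X, Y$, and by the extremal choice of $X$, any such $W$ should be a low-degree vertex that stays below $\Delta$. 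Iterate. For \ref{almost equal}, once \ref{contained in stars} and \ref{contains intersections} hold, $\cat F$ is determined by its $s+1$ link families $\cat L_i := \{X \setminus \{i\} : X \in \cat F_{[s+1]}^{\{i\}}\} \subset \binom{[n] \setminus [s+1]}{k-1}$, and the permutation symmetry of $[s+1]$ together with a rebalancing swap (if $A \in \cat L_i \setminus \cat L_j$, either move $A \cup \{i\}$ to $A \cup \{j\}$, or compensate with some $B \in \cat L_j \setminus \cat L_i$) drives all pairwise symmetric differences down to at most one element.

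The main obstacle will be verifying that each proposed swap genuinely preserves (or strictly decreases) $\Delta$: replacing $X$ by $Y$ can raise the degree of some $W \in \cat F$ that is disjoint from $Y$ but not from $X$, and controlling this requires careful choice of $X$ and $Y$ together with a potential-function argument. A natural candidate is to minimise the lexicographic tuple $\bigl(\Delta(\cat F),\ \#\{W \in \cat F : \deg_{\cat F}(W) = \Delta(\cat F)\}\bigr)$ over all minimizers, and then show that any lex-minimum must satisfy all three properties; the delicate part is ruling out pathological configurations at the boundary of the $O(\sqrt{s^3 k/n})$ error bar, which is precisely why the conjecture is stated in the ``exists a minimizer'' form rather than ``every minimizer''.
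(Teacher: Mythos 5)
This statement is a \emph{conjecture} in the paper, not a theorem; the authors give no proof of it (they only observe in \cref{equalitycase} that it holds when $\lambda=s$ is an integer). There is therefore nothing to compare your attempt against, and the appropriate question is whether your proposal would actually close the gap. It would not, as it stands.

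The most concrete error is the claim that Frankl-type shifts ``preserve disjointness pointwise'' and hence ``weakly decrease $\Delta$''. Shifts are not permutations of $\binom{[n]}{k}$, and the standard compression lemma (e.g.\ Lemma 2.1 of \cite{edgenum}) shows only that the \emph{total} number of disjoint pairs does not increase under $S_{ij}$. Individual degrees can go up: if $B \in \cat F$ contains $j$ but not $i$, and $B$ is not shifted because $B':=(B\setminus\{j\})\cup\{i\}$ is already in $\cat F$, then every $A\in \cat F$ with $A\cap B=\{j\}$, $i\notin A$, $(A\setminus\{j\})\cup\{i\}\notin\cat F$ becomes a new disjoint partner of $B$ after the shift. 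The degree of $B$ rises by exactly this count while $B'$ loses the same amount, so the operation redistributes degree rather than uniformly decreasing it, and showing that the \emph{maximum} does not increase requires an argument that is not present. The swap step for \ref{contains intersections} has the same problem, which you acknowledge in your closing paragraph: asserting that ``by the extremal choice of $X$, any such $W$ should be a low-degree vertex that stays below $\Delta$'' is precisely the gap that needs to be closed, and the lex-minimization potential function is only a candidate, not an argument. Finally, the reduction of \ref{almost equal} to ``rebalancing'' the link families $\cat L_i$ silently assumes that near-equal links minimise the maximum degree of the resulting family, which is a restatement of the dense-case problem (\cref{conj:dense}) rather than a deduction. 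In short: the structural evidence you cite from \cref{manylem3} does correctly suggest that any minimizer is close to the conjectured form, but turning ``close'' into ``there exists a minimizer exactly of this form'' requires controlling how local modifications affect individual degrees, and that control is not supplied.
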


\cref{equalitycase} shows that \cref{conj:sparse} is true when $\lambda = s$ is an integer, because in this case, any minimizer must be a union of $s$ stars. 

If we assume that $\cat F$ satisfies \ref{contained in stars} and \ref{contains intersections}, then the problem reduces to choosing the $\cat F_{[s+1]}^{\{i\}}$'s optimally, which are $(s+1)$ subfamilies of $\binom{[n] \setminus [s+1]}{k-1}$ with sizes summing to $\lambda \binom{n - s - 1}{k-1}$. If we further assume \ref{almost equal}, then every $\cat F_{[s+1]}^{\{i\}}$ is almost equal to the same subfamily  $\cat F' \subseteq \binom{[n] \setminus [s+1]}{k-1}$ with density about $\lambda/(s+1)$ and the problem reduces to minimizing the maximum degree of $\cat F'$, so we have come back to the original problem but in the dense case (since $\lambda/(s+1)\ge 1/2$), as noted on p. 659 of \cite{edgenum}. The way we chose $\cat F'$ in \cref{explicitconstruction} suggests the following conjecture.

\begin{conjecture}[Dense minimizer]
\label{conj:dense}
Suppose $\binom{n}{k} / 2 \le m \le \binom{n}{k}$ (so the size parameter $\lambda$ can be as large as $\Theta(n)$ for fixed $k$), and let $t$ be the smallest natural number such that $\binom{t}{k}\ge m$, then there is some
 \[ \cat F \subseteq \binom{[t]}{k}\] of size $\abs{\cat F} = m$ such that $\Delta(\cat F)$ is minimum over all subfamilies of $\binom{[n]}{k}$ of size $m$.
\end{conjecture}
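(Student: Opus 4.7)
The natural plan is to try to prove the stronger statement that the colex-initial segment of $\binom{[n]}{k}$ of size $m$ is a minimizer of $\Delta$; since this segment lies inside $\binom{[t]}{k}$ whenever $m \leq \binom{t}{k}$, the conjecture would follow immediately. The analogue for the edge count (using the lex order) was proved by Das, Gan and Sudakov in \cref{edgenumthm}; in the dense range $m \geq \binom{n}{k}/2$, colex is more natural than lex, since its initial segments sit in a minimal-size ground set.

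The standard tool is compression via $(i,j)$-shifts $S_{ij}$ for $i<j$, which preserve $|\cat F|$ and weakly decrease the number of disjoint pairs. Iterating all such shifts produces a left-compressed family, but left-compression alone is not enough: for instance, $\cat D_1 \cup \cat D_2 \cup \cat D_3 \cup \{\{4,5\}\}$ in $\binom{[10]}{2}$ is left-compressed of size $25 \leq \binom{8}{2}$ but has maximum degree $18$, whereas one can reach maximum degree around $13$--$14$ inside $\binom{[8]}{2}$. One therefore needs a secondary compression, iterated from the largest index downward, which for each $j$ replaces $\{A \in \cat F : \max A = j\}$ by the colex-minimal collection of $k$-sets with maximum element $j$ and the same size, pushing the family all the way to the colex-initial segment.

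The central difficulty is to show that $\Delta$ does not increase under either compression step. For $S_{ij}$, the only way $\deg_{\cat F}(C)$ can rise is via some $A \in \cat F$ with $C \cap A = \{j\}$ whose image $A' = (A \setminus \{j\}) \cup \{i\}$ is disjoint from $C$, which forces $j \in C$ and $i \notin C$, so that $C$ itself qualifies for the same shift. The direct computation $A' \cap S_{ij}(C) = \{i\} \neq \emptyset$ shows that after pairing the two moves, $S_{ij}(C)$ picks up no new disjoint neighbour from $A$, and one hopes a careful bookkeeping of such cancellations can be promoted to a global monotonicity statement. The analogous bookkeeping for the secondary colex step is subtler and is the main obstacle. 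As a backup, one can combine the explicit construction of \cref{explicitconstruction} with a matching lower bound obtained by pushing the spectral and stability arguments behind Theorem~\ref{thm:main} into the dense regime $\lambda = \Theta(n/k)$; together with a stability-type step this would force any minimizer to be contained in $\binom{[t]}{k}$, even without establishing the full colex-minimizer claim.
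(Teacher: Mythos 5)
This is a \emph{conjecture} in the paper (\cref{conj:dense}), not a theorem: the paper offers no proof of it, so there is nothing of the paper's to compare your argument against. Your proposal is a plan of attack, and unfortunately its primary line of attack is ruled out by the paper itself, in the paragraph immediately following the conjecture.

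You propose to prove the stronger claim that the colex-initial segment of size $m$ minimizes $\Delta$. The paper explicitly disproves this: taking $m = \binom{n-1}{k}+1$, the colex-initial segment is $\cat F_1 = \binom{[n-1]}{k} \cup \{\{1,2,\ldots,k-1,n\}\}$, and the newly added set $\{1,\ldots,k-1,n\}$ meets $[n-1]$ only in $[k-1]$, so it is disjoint from every $k$-subset of $[n]\setminus[k-1]$ that avoids $n$; its degree in $\cat F_1$ is the full $\binom{n-k}{k}$. A random family of the same size (as in \cref{exam:rand}) achieves $\Delta \approx (1-k/n)\binom{n-k}{k}$, which is strictly smaller. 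So $\Delta$ is \emph{not} monotone under the compressions you describe, and no amount of bookkeeping in the $S_{ij}$ or secondary colex steps can rescue that claim; the conclusion is false, not just hard. (Note that the conjecture itself is not contradicted by this example, since $t=n$ there; the conjecture only asserts that \emph{some} family inside $\binom{[t]}{k}$ is optimal, not that the colex segment is.) Your backup idea—pair the explicit construction (\cref{explicitconstruction}) with a matching spectral/stability lower bound pushed into the $\lambda = \Theta(n/k)$ regime—is in the right spirit, but the paper's own lower bound machinery (\cref{lowerbound}) carries error terms of order $\sqrt{s^3 k/n}$ and requires $n \gtrsim s^5 k$, which degenerates completely once $s = \Theta(n/k)$; a genuinely new argument would be needed there, and you do not supply one. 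As it stands this is not a proof, and the conjecture remains open.
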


When $m$ happens to be of the form $\binom{t}{k}$, the conjectured optimal family is $\cat F = \binom{[t]}{k}$, which is an initial segment of colex. In this case the conjecture  would follow from the corresponding statement that initial segments of colex minimizes the number of edges (such as Corollary 2.4 in \cite{edgenum} for some range of parameters), because minimizing the number of edges in addition to being regular is enough to guarantee minimizing the maximum degree.

However, when $m$ is not of the special form, the initial segment of colex is not necessarily optimal, just as the initial segment of lex is not optimal for $m = \binom{n-1}{k-1} + 1$.

\begin{example}
    Let $m = \binom{n-1}{k} + 1$. The initial segment of colex is 
    \[\cat F_1 = \binom{[n-1]}{k} \cup \{\{1, 2, \ldots, k-1, n\}\},\]
    so the vertex $\{1, 2, \ldots, k-1, n\}$ has degree $\binom{n-k}{k} = \Delta(K(n, k))$ in $K(n,k)[\mathcal{F}_1]$, i.e., the maximum possible value! We can easily improve on this: if we pick each vertex of $K(n, k)$ independently with probability $\left.m\middle/\binom{n}{k} \right. \approx 1-k/n $, then similarly to in \cref{exam:rand}, we have positive probability of getting a family $\cat F_2$ satisfying both $|\mathcal{F}_2| \geq |\mathcal{F}_1|$ and
    $$\Delta(\cat F_2) = \left(1 - k/n + O\left(\binom{n-k}{k}^{-1/2 + \eps} \right)\right) \binom{n-k}{k} < \Delta(\mathcal{F}_1).$$
\end{example}

This example also shows that for the maximum degree problem there is no counterpart of Lemma 2.3 in \cite{edgenum} which says if $\cat F$ minimizes the edge number among families of equal size, then the same holds for $\binom{[n]}{k} \setminus \cat F$. Indeed the complement of $\cat F_1$ is a subset of the star $\cat D_n$, so it has maximum degree 0 and therefore (trivially) minimizes the maximum degree over all sets of the same size, whereas, by the above argument, $\mathcal{F}_1$ does not minimize the maximum degree over all sets of the same size. 

\section{Background on singular values and the Expander Mixing Lemma}

Our proof of Theorem \ref{thm:main} relies on the Expander Mixing Lemma, which allows us to estimate the number of edges between two large, disjoint subsets of the vertex-set of a regular graph whose nontrivial eigenvalues are small in absolute value.

In this section, we outline some standard results we need from the spectral theory of biregular bipartite graphs, and more specifically from the spectral theory of (bipartite) Kneser graphs. 

\subsection{Singular values of biregular bipartite graphs}
\begin{definition}[Singular values of a linear map / bipartite graph]
Let $V_1$ and $V_2$ be finite-dimensional inner product spaces, and let $A:V_2 \to V_1$ be a linear map (also viewed as a matrix). Then a \emph{singular value decomposition} of $A$ is an expression of $A$ as a linear combination of rank-one linear maps
\[A = \sum_{i = 1}^{r} \sigma_i \vc{u}_i \vc{v}_i^T,\] where $\{\vc u_i\}_{i = 1}^{\dim V_1}$ is an orthonormal basis of $V_1$, $\{\vc v_i\}_{i = 1}^{\dim V_2}$ is an orthonormal basis of $V_2$,  $r = \min \{\dim V_1, \dim V_2\}$, and $\sigma_1 \ge \sigma_2 \ge \cdots \ge \sigma_r \geq 0$ are non-negative real numbers. The $\sigma_i$'s are called the \emph{singular values} of $A$ and are independent of the choice of orthonormal bases; moreover, the transpose $A^T$ (or dual linear map) has the same singular values as $A$.

Let $G$ be a bipartite graph with bipartition $U \sqcup V$. The \emph{singular values} of $G$ are the singular values of its bipartite adjacency matrix $A$, viewed as an linear operator $A: \RR^V \to \RR^U$ defined by $(A(\vc x))_u = \sum_{v \sim u}  x_v \quad \forall u \in U$. (Note that here, the spaces $\RR^U$ and $\RR^V$ are equipped with the standard inner product induced by the counting measure, making the bases $\{ \delta_u: u \in U\}$ and $\{ \delta_v: v \in V\}$ orthonormal. For example, the all-ones vector $\mathbbm 1_U = \sum_{u \in U} \delta_u$ has norm $\sqrt{\abs {U}}.$)
\end{definition}

The following is well-known; we provide a proof for completeness.

\begin{lemma}[Highest singular value of a biregular bipartite graph]
\label{lem:bireg}
Let $G$ be a bipartite graph with at least one edge and with bipartition $U \sqcup V$, such that every vertex in $U$ has degree $d_U$ and every vertex in $V$ has degree $d_V$. Let $A:\RR^V \to \RR^U$ denote the bipartite adjacency matrix of $G$. Then $A$ has largest singular value $\sigma_1 = \sqrt{d_V d_U}$ with $\mathbf{v}_1 = 1_V/\sqrt{|V|}$ and $\mathbf{u}_1 = 1_U/\sqrt{|U|}$ being corresponding singular vectors.
\end{lemma}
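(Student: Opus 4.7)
The plan is to verify directly that the specified vectors are singular vectors with singular value $\sqrt{d_U d_V}$, and then show via an operator-norm bound that no singular value can exceed this.

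For the first step, I would compute $A \mathbbm{1}_V$ and $A^T \mathbbm{1}_U$ entrywise. For any $u \in U$, $(A \mathbbm{1}_V)_u = |\{v \in V : v \sim u\}| = d_U$, so $A \mathbbm{1}_V = d_U \mathbbm{1}_U$; symmetrically $A^T \mathbbm{1}_U = d_V \mathbbm{1}_V$. The key bookkeeping step is the handshake identity $d_U |U| = |E(G)| = d_V |V|$, which I would invoke to check that after normalizing $\mathbf{v}_1 = \mathbbm{1}_V/\sqrt{|V|}$ and $\mathbf{u}_1 = \mathbbm{1}_U/\sqrt{|U|}$, the relations $A \mathbf{v}_1 = \sqrt{d_U d_V}\, \mathbf{u}_1$ and $A^T \mathbf{u}_1 = \sqrt{d_U d_V}\, \mathbf{v}_1$ both hold. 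This already shows $\sqrt{d_U d_V}$ appears as a singular value with these singular vectors (and the hypothesis that $G$ has at least one edge ensures $d_U, d_V > 0$ so these vectors are well-defined and the singular value is positive).

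For the second step, I would use the variational characterization $\sigma_1 = \sup_{\norm{x}_2 = 1} \norm{Ax}_2$ (which follows from the definition of a singular value decomposition). For an arbitrary $x \in \RR^V$, Cauchy--Schwarz applied separately for each $u \in U$ gives
\[ \norm{Ax}_2^2 = \sum_{u \in U}\Big(\sum_{v \sim u} x_v\Big)^2 \le \sum_{u \in U} d_U \sum_{v \sim u} x_v^2 = d_U \sum_{v \in V} d_V x_v^2 = d_U d_V \norm{x}_2^2, \]
where the middle equality swaps the order of summation, using that each $v \in V$ is counted exactly $d_V$ times. Hence $\sigma_1 \le \sqrt{d_U d_V}$, and combined with the unit vector $\mathbf{v}_1$ attaining equality, we get $\sigma_1 = \sqrt{d_U d_V}$.

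I do not anticipate any real obstacle here; the result is essentially the Perron--Frobenius phenomenon for biregular bipartite graphs, and the entire proof reduces to the handshake identity plus a single application of Cauchy--Schwarz. The only minor point requiring care is to make sure the normalizations in the two sides of the bipartition are consistent, which is exactly what the handshake identity guarantees.
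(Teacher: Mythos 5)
Your proposal is correct, but it takes a genuinely different route from the paper. The paper computes the row sums of the Gram matrix $A^T A$: the $(v_1,v_2)$ entry counts length-two walks from $v_1$ to $v_2$, so biregularity forces every row and column sum of $A^T A$ to equal $d_U d_V$; hence $A^T A$ is $d_U d_V$ times a doubly stochastic matrix, whose largest eigenvalue is $1$, giving $\sigma_1 = \sqrt{d_U d_V}$ at once. You instead establish the upper bound $\norm{Ax}_2^2 \le d_U d_V \norm{x}_2^2$ by a per-row Cauchy--Schwarz followed by a change of summation order, and then exhibit $\mathbf{v}_1, \mathbf{u}_1$ as a singular pair to show equality is attained. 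Your argument is more elementary and self-contained, as it avoids invoking the fact that a doubly stochastic matrix has spectral radius $1$; the paper's version is slightly slicker and packages the same double counting (a walk from $v_1$ fans out to $d_V$ neighbours $u$, each contributing $d_U$ walks) into a single observation about $A^T A$. One small point in your favour: your verification of the singular-vector claim, via the handshake identity $d_U|U| = d_V|V|$, is actually carried out more carefully than in the paper, where the stated normalization $A(\mathbbm 1_V/\sqrt{|V|}) = \sqrt{|U|}\mathbbm 1_U$ contains a typo (the correct right-hand side is $\sqrt{d_U d_V}\,\mathbbm 1_U/\sqrt{|U|}$).
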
 

\begin{proof}
    The singular values of $A$ are precisely the (non-negative) square roots of eigenvalues of $A^TA$ (which is symmetric and positive semidefinite). Observe that $(A^TA)_{v_1 v_2}$ counts the number of length-2 paths from $v_1$ to $v_2$ in $G$, so in the matrix $A^T A$, all $\abs{V}$ column sums and all $\abs{V}$ row sums are equal to $d_V d_U$, so $A^T A$ is $d_V d_U$ times a doubly stochastic matrix, and so its largest eigenvalue is $d_V d_U$. Hence $\sigma_1 = \sqrt{d_V d_U}$. It is easy to check that $A\left(1_V/\sqrt{|V|}\right) = \sqrt{|U|} 1_U$, verifying the last claim.

\end{proof}

\begin{notation}\label{innerprod}
For vectors in $\vc x, \vc y \in \RR^U$ (or, interchangeably, functions $f, g: U \to \RR$), we write their {\em inner product} as
\[\inner{\vc x, \vc y} := \sum_{u \in U}  x_u y_u \quad \left(\text{or } \inner{f, g} = \sum_{u \in U} f(u) g(u)\right),\]
and the induced Euclidean norm is defined by $\|\vc x\|_2^2 := \langle \vc x, \vc x\rangle$, $\|f\|_2^2 = \langle f,f\rangle$. Note that this normalisation differs from that in some texts, but it will be convenient for us, as we will work with $U$'s of different sizes.

\noindent For functions $f:U \to \mathbb{R}$, we also write 
\[\EE(f) = \frac{1}{\abs{U}}\sum_{u \in U} f(u),\]
so for example $\EE(f^2) = \norm{f}_2^2/\abs{U}.$
\end{notation}

\begin{theorem}[Expander Mixing Lemma, biregular case; see e.g.\cite{bipartexpand} Lemma 8, or \cite{bipartexpandhist} Theorem 3.1.1.]
Let $G$ be a bipartite graph with at least one edge and with bipartition $U \sqcup V$, such that every vertex in $U$ has degree $d_U$ and every vertex in $V$ has degree $d_V$. Let $A:\RR[V] \to \RR[U]$ denote the adjacency matrix of $G$. Let $\sigma_1$ be its largest singular value and $\sigma_2$ its second-largest (counting with multiplicity). Then for any subsets $X \subseteq U$ and $Y \subseteq V$,
with densities $\alpha = \abs{X} / \abs{U}$ and $\beta = \abs{Y} / \abs{V}$, we have 
\[\abs{\frac{e(X, Y)}{e(U, V)} - \alpha \beta} \le \frac{\sigma_2}{\sigma_1} \sqrt{\alpha (1-\alpha) \beta (1-\beta)} \le  \frac{\sigma_2}{\sigma_1} \sqrt{\alpha\beta}.\]

\end{theorem}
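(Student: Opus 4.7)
The plan is to reformulate the edge-count $e(X,Y)$ as the inner product $\langle \mathbbm{1}_X, A \mathbbm{1}_Y\rangle$ and then to exploit the singular decomposition of $A$. Since $G$ is $(d_U,d_V)$-biregular we have $A\mathbbm{1}_V = d_U \mathbbm{1}_U$ and $A^T \mathbbm{1}_U = d_V \mathbbm{1}_V$, so by \cref{lem:bireg} the top singular pair is $(\mathbbm{1}_U/\sqrt{|U|},\mathbbm{1}_V/\sqrt{|V|})$ with singular value $\sigma_1=\sqrt{d_U d_V}$. Write the orthogonal decompositions
\[\mathbbm{1}_X = \alpha \mathbbm{1}_U + g, \qquad \mathbbm{1}_Y = \beta \mathbbm{1}_V + h,\]
where, by construction, $g \perp \mathbbm{1}_U$ in $\RR^U$ and $h \perp \mathbbm{1}_V$ in $\RR^V$.

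Next, I would expand $\langle \mathbbm{1}_X, A\mathbbm{1}_Y\rangle$ using bilinearity. The two cross-terms vanish: $\langle \mathbbm{1}_U, A h\rangle = \langle A^T\mathbbm{1}_U, h\rangle = d_V\langle \mathbbm{1}_V, h\rangle = 0$, and analogously $\langle g, A\mathbbm{1}_V\rangle = 0$. The diagonal principal term gives $\alpha\beta \langle \mathbbm{1}_U, A\mathbbm{1}_V\rangle = \alpha\beta\, d_U |U| = \alpha\beta\, e(U,V)$, so
\[e(X,Y) - \alpha\beta\, e(U,V) = \langle g, A h\rangle.\]

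To bound the error term, observe that $g$ lies in the orthogonal complement of the top left singular vector and $h$ lies in the orthogonal complement of the top right singular vector, so expanding each in the singular basis and applying Cauchy--Schwarz yields $|\langle g,Ah\rangle| \le \sigma_2\, \|g\|_2\|h\|_2$. A short computation (Pythagoras) gives $\|g\|_2^2 = |X| - \alpha^2|U| = \alpha(1-\alpha)|U|$ and similarly $\|h\|_2^2=\beta(1-\beta)|V|$. Since $e(U,V)^2 = d_U d_V |U||V| = \sigma_1^2 |U||V|$, dividing through by $e(U,V)$ produces
\[\left|\frac{e(X,Y)}{e(U,V)} - \alpha\beta\right| \le \frac{\sigma_2}{\sigma_1}\sqrt{\alpha(1-\alpha)\beta(1-\beta)},\]
and the second inequality follows trivially from $(1-\alpha)(1-\beta)\le 1$.

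The argument is essentially algebraic manipulation once the right orthogonal splitting is identified, so I do not anticipate any serious obstacle; the one point that deserves care is verifying that the cross-terms really vanish — this is where the biregularity hypothesis (as opposed to mere regularity on one side) is used, since it ensures that both $\mathbbm{1}_U$ and $\mathbbm{1}_V$ are genuine singular vectors of $A$ with the same singular value $\sigma_1$.
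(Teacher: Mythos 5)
Your proof is correct and follows essentially the same route as the paper: write $e(X,Y)=\langle\mathbbm 1_X,A\mathbbm 1_Y\rangle$, peel off the top singular component via $\mathbbm 1_X=\alpha\mathbbm 1_U+g$, $\mathbbm 1_Y=\beta\mathbbm 1_V+h$, and bound the remainder by $\sigma_2\|g\|_2\|h\|_2$ using the singular decomposition and Cauchy--Schwarz. The only presentational difference is that you package the tail of the singular expansion into the vectors $g,h$ and argue the cross-terms vanish by orthogonality, whereas the paper writes out the coordinates $x_i,y_i$ explicitly; the content is identical.
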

\begin{proof}
Let $\mathbbm 1_X:U \to \RR$ and $\mathbbm 1_Y:V \to \RR$ be the indicator functions of $X$ and $Y$ respectively; then $e(X, Y) = \inner{\mathbbm 1_X, A \mathbbm 1_Y}$. Let \[A = \sum_{i = 1}^{r} \sigma_i \vc{u}_i \vc{v}_i^T\] be a singular value decomposition where
$\{\vc u_i\}_{i = 1}^{\abs{U}}$ is an orthonormal basis for $\RR^ U$,
$\{\vc v_i\}_{i = 1}^{\abs{V}}$ an orthonormal basis for $\RR^V$,
$r = \min (\abs{V}, \abs{U})$, 
and singular values $\sigma_1 \ge \sigma_2 \ge \cdots \ge \sigma_r \ge 0$,
and $\vc u_1 = \mathbbm 1_U / \sqrt{\abs{U}}$, $\vc v_1 = \mathbbm 1_V / \sqrt{\abs{V}}$  are singular vectors corresponding to $\sigma_1$. In these bases,  $\mathbbm 1_X$ and $\mathbbm 1_Y$ decompose as:
\[\mathbbm 1_X = \alpha \mathbbm 1_U + x_2 \vc u_2 + \cdots + x_{\abs{U}} \vc u_{\abs{U}},\]
\[\mathbbm 1_Y = \beta \mathbbm 1_V + y_2 \vc v_2 + \cdots + y_{\abs{V}} \vc v_{\abs{V}},\]
where $x_i = \inner{\mathbbm 1_X, \vc u_i}$, and $y_i = \inner{\mathbbm 1_Y, \vc v_i}$. Now,
\[A \mathbbm 1_Y = \beta d_U \mathbbm 1_U + \sigma_2 y_2 \vc u_2 + \cdots + \sigma_r y_r \vc u_r,\]
so 
\[\inner{\mathbbm 1_X, A \mathbbm 1_Y} = \alpha \beta d_U \abs{U} + \sum_{i = 2}^r \sigma_i x_i y_i, \]
so 
\begin{align*}
    \abs{e(X, Y) - \alpha \beta e(U, V)} 
    &= \abs{\sum_{i = 2}^r \sigma_i x_i y_i} \\
    &\le \sum_{i=2}^r \sigma_i \abs{x_i}\abs{y_i} \\
    &\le \sigma_2 \left(\sum_{i = 2}^r x_i^2\right)^{1/2} \left(\sum_{i = 2}^r y_i^2\right)^{1/2} \\
    &\le \sigma_2 \left(\norm{\mathbbm 1_X}_2^2 - \norm{\alpha \mathbbm 1_U}_2^2 \right)^{1/2} \left(\norm{\mathbbm 1_Y}_2^2 - \norm{\beta \mathbbm 1_V}_2^2 \right)^{1/2} \\
    &= \sigma_2  \left(\alpha \abs{U} - \alpha^2 \abs{U} \right)^{1/2} \left(\beta \abs{V} - \beta^2 \abs{V} \right)^{1/2} \\
    &= \sigma_2 \sqrt{\alpha (1-\alpha) \beta (1-\beta) \abs{U} \abs{V}}.\numberthis \label{ineq:expander}
\end{align*}
Recall that $e(U, V) = \abs{U} d_U = \abs{V} d_V$, so $e(U, V) = \sqrt{\abs{U} \abs{V} d_U d_V} =\sigma_1 \sqrt{\abs{U}\abs{V}}$. So dividing (\ref{ineq:expander}) by $e(U, V)$, we obtain  

\[\abs{\frac{e(X, Y)}{e(U, V)} - \alpha \beta} \le \frac{\sigma_2}{\sigma_1} \sqrt{\alpha (1-\alpha) \beta (1-\beta)},\]
as required.
\end{proof}

Multiplying both sides of the inequality in the above theorem by $e(U, V)/\abs{X}$, which is equal to $d_U \abs{U} /\abs{X} = d_U / \alpha$, we obtain the following.

\begin{corollary}
\label{sqrtlb}
Let $G$ be a non-empty bipartite graph with vertex partition $U \sqcup V$, biregular of degrees $d_U$ and $d_V$, and let $X \subseteq U$ and $Y \subseteq V$ be subsets of densities $\alpha = \abs{X} / \abs{U}$ and $\beta = \abs{Y} / \abs{V}$, and $\alpha >0$, then
\[\abs{\frac{e(X, Y)}{\abs{X}} - d_U \beta} \le \frac{\sigma_2}{\sigma_1} d_U \sqrt{\frac{\beta (1-\beta)}{\alpha}}\le \frac{\sigma_2}{\sigma_1} d_U \sqrt{\frac{\beta}{\alpha}},\]
where $\sigma_1 \ge \sigma_2$ are the two largest singular values of $G$. In particular, the maximum degree $\Delta (G)$ is at least the average degree in $X$, so 
    \[\Delta (G) \ge \frac{e(X, Y)}{\abs{X}} \ge d_U \left(\beta - \frac{\sigma_2}{\sigma_1}\sqrt\frac{\beta (1-\beta)}{\alpha}\right) \ge d_U \left(\beta - \frac{\sigma_2}{\sigma_1}\sqrt\frac{\beta}{\alpha}\right). \]
\end{corollary}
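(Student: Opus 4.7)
The plan is to derive the corollary directly from the Expander Mixing Lemma by rescaling, and then deduce the maximum-degree bound by a one-line averaging argument.

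First, I would start from the Expander Mixing Lemma inequality
\[\abs{\frac{e(X, Y)}{e(U, V)} - \alpha \beta} \le \frac{\sigma_2}{\sigma_1} \sqrt{\alpha (1-\alpha) \beta (1-\beta)},\]
which was just established. The key observation is that, because $G$ is $d_U$-regular on $U$, we have $e(U,V) = d_U\abs{U}$, and consequently
\[\frac{e(U,V)}{\abs{X}} = \frac{d_U \abs{U}}{\abs{X}} = \frac{d_U}{\alpha}.\]
Multiplying both sides of the Expander Mixing inequality by $d_U/\alpha$ transforms the left-hand side into $\abs{e(X,Y)/\abs{X} - d_U\beta}$ and the right-hand side into $(\sigma_2/\sigma_1)\, d_U \sqrt{\beta(1-\beta)/\alpha}$, giving the first claimed bound. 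The weaker bound on the right is obtained simply by using $1-\beta \le 1$.

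For the maximum-degree statement, I would argue by averaging: for each $x \in X$, let $d_Y(x)$ denote the number of neighbours of $x$ in $Y$. Then $\sum_{x \in X} d_Y(x) = e(X,Y)$, so the average value of $d_Y(x)$ over $x \in X$ is exactly $e(X,Y)/\abs{X}$. Since $d_Y(x)$ is bounded above by the full degree of $x$ in $G$, which is at most $\Delta(G)$, we obtain
\[\Delta(G) \;\ge\; \frac{1}{\abs{X}} \sum_{x \in X} d_Y(x) \;=\; \frac{e(X,Y)}{\abs{X}}.\]
Combining this with the lower bound on $e(X,Y)/\abs{X}$ from the previous step (and using the triangle inequality to drop the absolute value) yields the final chain of inequalities in the statement.

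There is essentially no obstacle here: the corollary is a straightforward reformulation of the Expander Mixing Lemma from the ratio $e(X,Y)/e(U,V)$ to the more directly useful ratio $e(X,Y)/\abs{X}$, together with the trivial observation that the maximum degree dominates any average degree over a subset of vertices. The only small care needed is to verify that $\alpha > 0$ (so division by $\alpha$ is legal), which is part of the hypothesis.
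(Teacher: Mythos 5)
Your proof is correct and follows essentially the same route as the paper: multiply the Expander Mixing Lemma through by $e(U,V)/\abs{X} = d_U/\alpha$ to rescale, drop the $(1-\beta)$ factor for the weaker bound, and note that the maximum degree dominates the average degree over any nonempty vertex subset. Nothing is missing.
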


\subsection{Spectra of Kneser graphs and bipartite Kneser graphs}

In \cite{knesereigen} Theorem 13, Lov\'{a}sz gave the eigenspace decomposition of $K(n, k)$, which also allows us to prove the bound on the singular values for the bipartite Kneser graph between $\binom{[n]}{k}$ and $\binom{[n]}{l}$ for $k \neq l$.  We describe the decomposition (and prove some simple properties thereof), in the next lemma.

\begin{lemma}[Decomposition implicit in \cite{knesereigen}, Theorem 13]
\label{lem:decomp}
    For each $0 \leq i \le j \leq n/2$, define $\iota_{ij}:\RR^{\binom{[n]}{i}} \to \RR^{\binom{[n]}{j}} $ by
    \[\left(\iota_{ij}(\vc x)\right)_J = \EE( x_I | I \subseteq J),\]
     i.e.\ the average of the $ x_I$'s as $I$ ranges over the $\binom{j}{i}$ size-$i$ subsets of $J$. For each $i \geq 1$, define the subspace $P_i \le \RR^{\binom{[n]}{i}}$ to be the image of the map $\iota_{(i-1)i}$, i.e.,
    \[P_i := \iota_{(i-1)i}\left(\RR^{\binom{[n]}{i-1}}\right),\]
    and define $P_0 := \{0\}$. Let $Q_i := P_i^\perp$
    be the orthogonal complement of $P_i$ in $\RR^{\binom{[n]}{i}}$, and for each $i \leq k \leq n/2$ write
    \[Q_i^{(k)} := \iota_{ik} (Q_i).\]
Then for each $i \leq k \leq n/2$, we have
\begin{equation}\label{eq:dim-equality}\dim Q_i^{(k)} = \dim Q_i = \binom{n}{i} - \binom{n}{i-1},\end{equation}
and the vector space $\RR^{\binom{[n]}{k}}$ decomposes into a direct sum: 
    \begin{equation}\label{eq:orthog-decomp} \RR^{\binom{[n]}{k}} = \bigoplus_{i = 0}^k Q_i^{(k)}.\end{equation}
    Furthermore, $Q_0^{(k)}$ consists of the constant vectors in $\mathbb{R}^{{[n] \choose k}}$, and 
    \begin{equation}\label{eq:useful}Q_1^{(k)} = \operatorname{Span}\{\mathbbm 1_{j \in I}-k/n:\ j \in [n]\}.\end{equation}
\end{lemma}
\begin{proof}
    Firstly, as noted by Lov\'asz \cite{knesereigen} and first proven by Gottlieb \cite{gottlieb}, for each $0 \leq i \le j \leq n/2$ the map $\iota_{ij}:\RR^{\binom{[n]}{i}} \mono \RR^{\binom{[n]}{j}}$ is an injection, and moreover these injections form a directed system, meaning that $\iota_{ik} = \iota_{jk} \circ \iota_{ij}$ for all $0 \leq i \le j \le k \leq n/2$) (this follows from the tower rule for conditional expectation).

Since $\iota_{(i-1)i}$ is an injection, we have $\dim(P_i) = {n \choose i-1}$ for all $i$, using the convention that ${n \choose -1}=0$. Therefore, we have $\dim(Q_i) = {n \choose i} - {n \choose i-1}$ for all $i$. Since $\iota_{ik}$ is an injection, we have $\dim(Q_i^{(k)})=\dim(Q_i) = {n \choose i}-{n \choose i-1}$ for all $i$, proving (\ref{eq:dim-equality}).

To prove (\ref{eq:orthog-decomp}), we use induction on $k$. The $k=0$ case is obvious. Let $k \geq 1$, and assume that
\[\RR^{\binom{[n]}{k-1}} = \bigoplus_{i = 0}^{k-1} Q_i^{(k-1)}.\]
Then
\begin{align*}
    \RR^{\binom{[n]}{k}} &= P_k \oplus Q_k & \text{since }Q_k \text{ is the orthogonal complement of }P_k \\
    &= \iota_{(k-1)k} \left(\RR^{\binom{[n]}{k-1}}\right) \oplus Q_k \\
    &= \iota_{(k-1)k} \left(\bigoplus_{i = 0}^{k-1} Q_i^{(k-1)} \right) \oplus Q_k &\text{by the induction hypothesis} \\
    &= \bigoplus_{i = 0}^{k-1} \iota_{(k-1)k} \left( Q_i^{(k-1)} \right) \oplus Q_k &\text{since }\iota_{(k-1)k} \text{ is injective} \\
    &= \bigoplus_{i = 0}^{k} Q_i^{(k)}. 
\end{align*}
This proves (\ref{eq:orthog-decomp}).

Since $P_0 = \{0\}$, we have $Q_0 = \mathbb{R}^{{[n] \choose 0}}$, and therefore $Q_0^{(k)}$ consists of the constant vectors in $\mathbb{R}^{[n] \choose k}$, as claimed. Moreover, $P_1$ consists of the constant vectors in $\mathbb{R}^{{[n] \choose 1}}$, so $Q_1 = \{\mathbf{x} \in \mathbb{R}^{{[n] \choose 1}}:\ \sum_{i=1}^{n}x_{\{i\}}=0\}$, so the vectors $\{\mathbf{x}(j):\ j \in [n]\}$ defined by $(\mathbf{x}(j))_{\{i\}} = k(\mathbbm 1_{\{i=j\}}-1/n)$ span $Q_1$, and $\iota_{1k}(\mathbf{x}(j)) = \mathbf{y}(j)$, where $(\mathbf{y}(j))_I = \mathbbm 1_{j \in I}-k/n$ for all $I \in {[n] \choose k}$,
    so (\ref{eq:useful}) holds.
    
\end{proof}

We will see shortly that (\ref{eq:orthog-decomp}) is an orthogonal decomposition. But next, we observe that the bipartite Kneser adjacency matrix acts nicely on the decomposition (\ref{eq:orthog-decomp}).

\begin{lemma}[Bipartite Kneser adjacency matrix under the decomposition (\ref{eq:orthog-decomp}), modification of \cite{knesereigen} proof of Theorem 13]
\label{lem:nice}
    Let $k \le l \le n/2$, and $U = \binom{[n]}{k}$ and $ V = \binom{[n]}{l} $. Let $A: \RR^{\binom{[n]}{k}} \to \RR^{\binom{[n]}{l}}$ be the bipartite adjacency matrix for the bipartite Kneser graph between $U$ and $V$. Then $A(Q_i^{(k)}) \subset Q_i^{(l)}$ for all $i \leq k$, and in fact for any $\vc y \in Q_i^{(k)}$, we have
    \begin{equation}A \vc y =  (-1)^i \binom{n-l-i}{k-i} \iota_{kl} (\vc y) \in Q_i^{(l)}.
    \end{equation}
\end{lemma}
\begin{proof}
    
    Let $\vc x $ be an arbitrary element of $Q_i$ and consider the action of the bipartite Kneser adjacency matrix $A$ sending $\vc y: = \iota_{ik}(\vc x)\in Q_i^{(k)} \subseteq \RR^{\binom{[n]}{k}}$ to $\RR^{\binom{[n]}{l}}$. Fix $L \in \binom{[n]}{l}$. Then 
    \begin{align*}
        (A \iota_{ik}(\vc x) )_L
        &= \sum_{\substack{K:\ \abs{K} = k, \\ K \cap L = \varnothing}} (\iota_{ik}(\vc x))_K \\
        &= \sum_{\substack{K:\ \abs{K} = k, \\ K \cap L = \varnothing}} \EE_{\abs{I} = i} (x_I |I \subseteq K)\\
        &= \sum_{\substack{I:\ \abs{I} = i, \\ I \cap L = \varnothing}} \sum_{\substack{K:\ \abs{K} = k, \\ I \subseteq K \subseteq [n] \setminus L}} \frac{1}{\binom{k}{i}}  x_I \\
        &= \frac{\binom{n-l-i}{k-i}}{\binom{k}{i}} \sum_{\substack{I:\ \abs{I} = i, \\ \abs{I \cap L} = 0}}  x_I. \numberthis \label{eq:S0}
    \end{align*}
For each $j \leq i$, let \[S_j: = \sum_{\substack{I:\ \abs{I} = i, \\ \abs{I \cap L} = j}}  x_I;\]
we need to find $S_0$. For $i \ge 1$,  the subspace $P_i \le \RR^{\binom{[n]}{i}}$ is spanned by the indicators for the event $I \supset T$ for each $T$ of size $\abs{T} = i-1$. Denote these indicator vectors by $\vc v_T$ (so $(\vc v_T)_I$ is 1 if $T \subset I$, and is 0 otherwise). Since $\vc x \in Q_i$ is orthogonal to $ P_i = \operatorname{Span}\left\{\vc v_T: T \in \binom{[n]}{i-1}\right\}$, we know that for every $T \in \binom{[n]}{i-1}$, we have
    \[\sum_{\substack{I:\ \abs{I} = i,\\ I \supseteq T}} x_I = \inner{\vc x, \vc v_T} = 0.\]
If we sum this equality over all $T$'s with $\abs{T} = i-1$ and $\abs{T \cap L} = j$, then only those $I$'s with $\abs{I \cap L} = j$ or $j+1$ can appear, and moreover each of those $I$'s with $\abs{I \cap L} = j$ appears $i-j$ times (when $T$ equals $I$ minus one element in $I \setminus L$), and each of those $I$'s with $\abs{I \cap L} = j+1$ appears $j+1$ times (when $T$ equals $I$ minus one element in $I \cap L$),
    so
    \[(i-j) S_j + (j+1) S_{j+1} = 0.\]
Now we can solve iteratively, obtaining
    \[S_0 = \left(-\frac{1}{i}\right)S_1 = \left(-\frac{1}{i}\right)\left(-\frac{2}{i-1}\right) S_2 = \cdots = \left(-\frac{1}{i}\right)\left(-\frac{2}{i-1}\right) \cdots \left(-\frac{i}{1}\right) S_i = (-1)^i \sum_{\substack{I:\ \abs{I} = i,\\ I \subseteq L}}  x_I.\]
    Substituting this back into (\ref{eq:S0}), we obtain 
    \[(A \iota_{ik}(\vc x) )_L = (-1)^i \binom{n-l-i}{k-i} \EE_{\abs{I} = i} (x_I | I \subseteq L) = (-1)^i \binom{n-l-i}{k-i} (\iota_{il}(\vc x))_L.\]
This holds for all $L \in \binom{[n]}{l}$, so we may conclude that
    \begin{align}A\vc y &= A \iota_{ik}(\vc x)\nonumber \\
    &= (-1)^i \binom{n-l-i}{k-i} \iota_{il}(\vc x) \nonumber\\
    &=   (-1)^i \binom{n-l-i}{k-i} \iota_{kl}(\iota_{ik}  (\vc x))\nonumber\\
    &=  (-1)^i \binom{n-l-i}{k-i} \iota_{kl}(\vc y) \in \iota_{kl} \left(Q_i^{(k)}\right) = Q_i^{(l)}, \label{eq:eigenval}
    \end{align}
    as required.
\end{proof}

\begin{lemma}[Eigenvalues of the Kneser graph, \cite{knesereigen} Theorem 13]
\label{eigenvalkneser}
Let $k \le n/2$. Let $A$ be the Kneser graph adjacency matrix on $\RR^{\binom{[n]}{k}}$. Then (\ref{eq:orthog-decomp}) is the eigenspace decomposition for $A$, with $Q_i^{(k)}$ corresponding to the eigenvalue 
\[(-1)^i \binom{n-k-i}{k-i}\]
In particular, (\ref{eq:orthog-decomp}) is an orthogonal decomposition.
\end{lemma}
\begin{proof}
 In the special case $k=l$, the map $\iota_{kl}$ is the identity, and the previous lemma shows that the $Q_i^{(k)}$'s are indeed eigenspaces of $A: \RR^{\binom{[n]}{k}} \to \RR^{\binom{[n]}{k}}$ corresponding to distinct eigenvalues, with $Q_i^{(k)}$ corresponding to the eigenvalue $(-1)^i \binom{n-k-i}{k-i}$. Therefore, the $Q_i^{(k)}$'s are pairwise orthogonal, being eigenspaces of a self-adjoint linear operator corresponding to distinct eigenvalues.
\end{proof}

We can now obtain our required bound on the singular values of the bipartite Kneser graph. 

\begin{lemma}[Singular values of the bipartite Kneser graph]
\label{singvalkneser}
Let $k < l \le n/2$, and $U = \binom{[n]}{k}$ and $ V = \binom{[n]}{l} $. Let $A: \RR^{\binom{[n]}{k}} \to \RR^{\binom{[n]}{l}}$ be the bipartite adjacency matrix for the bipartite Kneser graph between $U$ and $V$. Then (\ref{eq:orthog-decomp}) yields a singular value decomposition for $A : \RR^{\binom{[n]}{k}} \to \RR^{\binom{[n]}{l}}$, and if we let $\sigma_1\ge \sigma_2$ be the two largest singular values, then 
\[\frac{\sigma_2}{\sigma_1} \le \frac{k}{n-l} \le \frac{l}{n-k}.\]
\end{lemma}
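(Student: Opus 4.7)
The plan is to diagonalise the bipartite adjacency operator $A\colon\mathbb{R}^V\to\mathbb{R}^U$ using the canonical $S_n$-isotypic decomposition. Both $\mathbb{R}^V$ and $\mathbb{R}^U$ decompose as multiplicity-free direct sums of irreducible $S_n$-modules indexed by two-row partitions: $\mathbb{R}^V=\bigoplus_{i=0}^{l}V_i^{(l)}$ and $\mathbb{R}^U=\bigoplus_{i=0}^{k}V_i^{(k)}$, where $V_i$ is the irreducible corresponding to $(n-i,i)$. Since $A$ is $S_n$-equivariant, Schur's lemma implies that $A$ vanishes on $V_i^{(l)}$ for $i>k$ and acts as multiplication by a scalar $\tau_i$ between $V_i^{(l)}$ and $V_i^{(k)}$ for $0\le i\le k$. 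Consequently the singular values of $A$ are $\{|\tau_i|\}_{i=0}^{k}$, each with multiplicity $\dim V_i$.

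First I would compute $|\tau_0|$ and $|\tau_1|$ by hand. For $i=0$, $A\mathbf{1}_V=d_U\mathbf{1}_U$ yields $\sigma_1=|\tau_0|=\sqrt{d_Ud_V}$, recovering \cref{lem:bireg}. For $i=1$ I would use the test function $g(Y)=\mathbbm{1}_{1\in Y}-l/n\in V_1^{(l)}$; a short direct expansion gives $Ag(X)=-\binom{n-k-1}{l-1}\bigl(\mathbbm{1}_{1\in X}-k/n\bigr)$, and the symmetric computation for $A^T$ yields $|\tau_1|^2=\binom{n-k-1}{l-1}\binom{n-l-1}{k-1}$.

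The main technical step is to establish the general formula $|\tau_i|^2=\binom{n-k-i}{l-i}\binom{n-l-i}{k-i}$ for all $0\le i\le k$, which immediately yields monotonicity of $|\tau_i|$ in $i$ (so that $\sigma_2=|\tau_1|$). The cleanest route uses the $\mathfrak{sl}_2$-commutation relations among the shadow operators $U_j,D_j$ between adjacent levels $\binom{[n]}{j}$, namely $D_jU_j-U_{j-1}D_{j-1}=(n-2j)\,\mathrm{id}$: up to the isometric complementation $Y\leftrightarrow Y^c$, $A$ factors as a composition of down-shadows from level $n-l$ down to level $k$, and the commutation relations let one evaluate this composition inductively on a canonical ladder of generators of $V_i$ (starting from a highest-weight vector in $V_i^{(i)}$). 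Monotonicity is then immediate from the formula, since the ratios $(l-i+1)/(n-k-i+1)$ and $(k-i+1)/(n-l-i+1)$ appearing in the resulting recursion are at most $1$ (using $k+l\le n$). I expect this monotonicity/formula step to be the only nontrivial part of the argument; the $i=0,1$ calculations are just unwindings of definitions.

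Finally, combining the formulas gives
\[\Bigl(\frac{\sigma_2}{\sigma_1}\Bigr)^{\!2}\;=\;\frac{\binom{n-k-1}{l-1}\binom{n-l-1}{k-1}}{\binom{n-k}{l}\binom{n-l}{k}}\;=\;\frac{l}{n-k}\cdot\frac{k}{n-l},\]
and the two inequalities in the statement then follow from elementary manipulations: $l/(n-k)\le 1$ (since $l\le n/2\le n-k$) gives the first, and $k(n-k)\le l(n-l)$ — equivalent to $(l-k)(n-l-k)\ge 0$, which holds since $k\le l\le n/2$, i.e., $x\mapsto x(n-x)$ is increasing on $[0,n/2]$ — gives the second.
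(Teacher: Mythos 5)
Your formula for the second singular value is in fact correct: one can verify in small cases, e.g.\ $n=5,k=1,l=2$, that $\sigma_1=\sqrt{d_U d_V}=\sqrt{18}$ and $\sigma_2=\sqrt{3}=\sqrt{\binom{n-k-1}{l-1}\binom{n-l-1}{k-1}}$. But the final deduction contains an error, and in fact that error masks a genuine defect in the lemma's statement. From
\[\left(\frac{\sigma_2}{\sigma_1}\right)^2=\frac{l}{n-k}\cdot\frac{k}{n-l}\quad\text{and}\quad\frac{l}{n-k}\le 1,\]
you only get $\sigma_2/\sigma_1\le\sqrt{k/(n-l)}$, \emph{not} $\sigma_2/\sigma_1\le k/(n-l)$: since $k/(n-l)\le 1$, taking the square root makes the quantity \emph{larger}. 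Worse, your own exact formula shows the first inequality in the lemma is actually false whenever $k<l$: the quantity $\sigma_2/\sigma_1$ is precisely the geometric mean of $k/(n-l)$ and $l/(n-k)$, and since $k/(n-l)<l/(n-k)$ (your ``second'' inequality), the geometric mean lies strictly above $k/(n-l)$. Concretely, for $n=5,k=1,l=2$, one has $\sigma_2/\sigma_1=1/\sqrt{6}\approx 0.41>1/3=k/(n-l)$. What \emph{is} true, and follows immediately from the exact formula and $k/(n-l)\le l/(n-k)$, is the weaker bound $\sigma_2/\sigma_1\le l/(n-k)$, which is all that the remainder of the paper uses (in \cref{kneserexpander}, with parameters $(n-1,k-1,k)$, one invokes $\sigma_2/\sigma_1\le k/(n-k)=l'/(n'-k')$, i.e.\ the second expression, not the first).

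As for the comparison with the paper's proof: the two arguments use the same underlying decomposition — the subspaces $Q_i^{(k)}$ in the paper's proof are exactly the isotypic copies of the $S_n$-irreducibles indexed by $(n-i,i)$, so the paper's averaging-map chain is an elementary stand-in for Schur's lemma. Where you part company is in the precision of the scalars: the paper stops at an inequality, while you propose the exact value. Notably, the paper's own derivation of the first inequality also contains an error — in the step
\[(A\iota_{ik}(\vc x))_L=(-1)^i\binom{n-l-i}{k-i}\,\EE_{\abs I=i}(x_I\mid I\subseteq L),\]
a factor $\binom{l}{i}/\binom{k}{i}$ is dropped (equation (3.1) has a $1/\binom{k}{i}$, while passing to the conditional expectation over $I\subseteq L$ introduces a $1/\binom{l}{i}$, and these do not cancel unless $k=l$). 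With the corrected coefficient $(-1)^i\binom{n-l-i}{k-i}\binom{l}{i}/\binom{k}{i}$, the argument yields the (true) bound $\sigma_2/\sigma_1\le l/(n-k)$ rather than the (false) $k/(n-l)$. So: your formula is right, your final inference is not, and the lemma as stated should be weakened to assert only $\sigma_2/\sigma_1\le l/(n-k)$.
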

\begin{proof}

    By Lemma \ref{lem:nice}, we may factorize $A: \RR^{\binom{[n]}{k}} \to \RR^{\binom{[n]}{l}}$ as $A = \iota_{kl} \circ D$, where the `scaling part' $D:\mathbb{R}^{{[n] \choose k}}\to \mathbb{R}^{{n \choose k}}$ acts on $Q_i^{(k)}$ as multiplication by $(-1)^i \binom{n-l-i}{k-i}$ (for each $i$), and $\iota_{kl}$ is $1/\binom{l}{k}$ times the bipartite adjacency matrix of another biregular graph between $\binom{[n]}{k}$ and $\binom{[n]}{l}$. Among the eigenvalues of $D$, $\binom{n-l}{k}$ has largest absolute value, and its eigenspace is 
    \[Q_0^{(k)} = \iota_{0k}\left(\RR^{\binom{[n]}{0}}\right) = \operatorname{Span}\{\mathbbm 1\},\]
    which is one-dimensional.
    By \cref{lem:bireg}, the all-ones vector $\mathbbm 1$ also attains the largest singular value of $\iota_{kl}$, i.e.,
    $$\frac{\norm{\iota_{kl}(\vc w)}_2}{\norm{\vc w}_2} \leq \frac{\norm{\iota_{kl}(\mathbbm 1)}_2}{\norm{\mathbbm 1}_2}\quad \forall \vc w \in \mathbb{R}^{{[n] \choose k}} \setminus \{0\},$$
    so $\mathbbm 1$ is also the highest singular vector for $A = \iota_{kl} \circ D$. All vectors $\vc v \in \mathbb{R}^{[n] \choose k} \setminus \{0\}$ that are orthogonal to $\mathbbm 1$ satisfy \[\norm{D \vc v}_2 \le \binom{n-l-1}{k-1} \norm{\vc v}_2\le \frac{k}{n-l}\frac{\norm{D \mathbbm 1}_2}{\norm{\mathbbm 1}_2} \norm{\vc v}_2,\]
    and therefore satisfy
    \begin{align*}
\frac{\norm{A \vc v}_2}{\norm{\vc v}_2} & = \frac{\norm{\iota_{kl}(Dv)}_2}{\norm{\vc v}_2}\\
& = \frac{\norm{\iota_{kl}(Dv)}_2}{\norm{D\vc v}_2}\cdot \frac{\norm{D\vc v}_2}{\norm{\vc v}_2}\\
& \leq \frac{\norm{\iota_{kl}(\mathbbm 1)}_2}{\norm{\mathbbm 1}_2} \cdot {n-l-1 \choose k-1},
\end{align*}
    and comparing this to
    \begin{align*}\frac{\norm{A \mathbbm 1}_2}{\norm{\mathbbm 1}_2} & = \frac{\norm{\iota_{kl}(D\mathbbm 1)}_2}{\norm{\mathbbm 1}_2}\\
& = \frac{\norm{\iota_{kl}(\mathbbm 1)}_2}{\norm{\mathbbm 1}_2} \cdot {n-l \choose k}
    \end{align*}
    yields
$$\frac{\sigma_2}{\sigma_1} \leq \frac{{n-l-1 \choose k-1}}{{n-l \choose k}} = \frac{k}{n-l} \leq \frac{l}{n-k},$$
as required.
\end{proof}

Recall the notation $\cat F_J^I$ (\cref{techset}), which will now be very useful. In the Kneser graph, we will need a lower bound on the number of edges between $\cat C = \cat F \cap \cat D_n$ and $\cat B = \cat F \setminus \cat D_n$. For any $c \in \cat C$ and $b \in \cat B$, we have $n \in c$ but $ n \notin b$, so $c \cap b = \varnothing \iff (c \setminus \{n\}) \cap b = \varnothing$, so we can remove $n$ from each $c \in \cat C$ to obtain $\cat C' = \cat F_{\{n\}}^{\{n\}}\subseteq \binom{[n-1]}{k-1}$, and count the edges between $\cat C'\subseteq \binom{[n-1]}{k-1}$ and $\cat B = \cat F_{\{n\}}^\varnothing\subseteq \binom{[n-1]}{k}$ in the bipartite Kneser graph. In this setting, we can combine \cref{sqrtlb} with \cref{singvalkneser} to obtain the following.
\begin{corollary}[Expander Mixing Lemma applied to a star and its complement]
    \label{kneserexpander}
    Let $n \ge 2k+1$ and $\cat F \subseteq \binom{[n]}{k}$. Suppose $\cat F_{\{n\}}^{\{n\}} \subseteq \binom{[n-1]}{k-1}$ has size $\gamma\binom{n-1}{k-1} $, and $\cat F_{\{n\}}^\varnothing\subseteq \binom{[n-1]}{k} $  has size $\beta\binom{n-1}{k}$. Then 

    \[\frac{e(\cat F_{\{n\}}^{\{n\}}, \cat F_{\{n\}}^\varnothing)}{\abs{\cat F_{\{n\}}^{\{n\}}}}
    \ge \binom{n-k}{k} \left(\beta - \frac{k}{n-k} \sqrt{\frac{\beta}{\gamma}}\right) \quad\text{ if } \gamma > 0,
    \]
    and
    \[\frac{e(\cat F_{\{n\}}^{\{n\}}, \cat F_{\{n\}}^\varnothing)}{\abs{\cat F_{\{n\}}^\varnothing}}
    \ge \binom{n-k-1}{k-1} \left(\gamma - \frac{k}{n-k} \sqrt{\frac{\gamma (1-\gamma)}{\beta}}\right) \quad\text{ if } \beta > 0.
    \]
\end{corollary}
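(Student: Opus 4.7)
The plan is that this is essentially an immediate consequence of the tools assembled just before the statement, so the proof will consist of identifying the correct bipartite graph, computing its parameters, and applying \cref{sqrtlb} and \cref{singvalkneser} in tandem. As noted in the paragraph preceding the corollary, edges of $K(n,k)$ between $\cat F \cap \cat D_n$ and $\cat F \setminus \cat D_n$ are in bijection with edges of the bipartite Kneser graph $G$ between $U := \binom{[n-1]}{k-1}$ and $V := \binom{[n-1]}{k}$ joining $X := \cat F_{\{n\}}^{\{n\}}$ to $Y := \cat F_{\{n\}}^{\varnothing}$, because removing $n$ from the $k$-sets in $\cat D_n$ does not affect disjointness with sets that do not contain $n$. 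So it suffices to bound $e(X,Y)$ in $G$.

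Next I would compute the two degree parameters of $G$: every $I \in U$ has $d_U = \binom{(n-1)-(k-1)}{k} = \binom{n-k}{k}$ neighbours in $V$, and every $J \in V$ has $d_V = \binom{(n-1)-k}{k-1} = \binom{n-k-1}{k-1}$ neighbours in $U$. Since $n \ge 2k+1$ gives $k \le (n-1)/2$, \cref{singvalkneser} applied with parameter triple $(n-1, k-1, k)$ in place of $(n,k,l)$ yields
\[\frac{\sigma_2}{\sigma_1} \le \frac{k}{(n-1)-(k-1)} = \frac{k}{n-k},\]
where $\sigma_1 \ge \sigma_2$ are the two largest singular values of (the bipartite adjacency matrix of) $G$.

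Now both inequalities follow directly from \cref{sqrtlb}. For the first, apply that corollary with the roles $(U, V, X, Y, \alpha, \beta, d_U)$ as above, with $\alpha = \gamma$ (and assuming $\gamma > 0$): substituting the bound on $\sigma_2/\sigma_1$ gives
\[\frac{e(X, Y)}{\abs{X}} \ge d_U \left(\beta - \frac{\sigma_2}{\sigma_1}\sqrt{\frac{\beta}{\gamma}}\right) \ge \binom{n-k}{k}\left(\beta - \frac{k}{n-k}\sqrt{\frac{\beta}{\gamma}}\right),\]
which is the first claimed inequality. For the second, swap the roles of the two sides of the bipartition: since the transpose of the bipartite adjacency matrix has the same singular values, \cref{sqrtlb} is symmetric under swapping $(U, X, \alpha, d_U) \leftrightarrow (V, Y, \beta, d_V)$. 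Applying it in the swapped form (assuming $\beta > 0$) and retaining the factor of $(1-\gamma)$ inside the square root gives the second inequality.

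There is no serious obstacle here; the only care needed is in matching parameters when invoking \cref{singvalkneser} (in particular that $k-1 \le k \le (n-1)/2$ so the lemma applies) and in observing that \cref{sqrtlb}, stated asymmetrically, can be applied in either direction because the singular values of $G$ and $G^T$ coincide.
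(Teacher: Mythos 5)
Your proof is correct and takes exactly the approach the paper intends: the paper explicitly introduces the bijection between edges of $K(n,k)[\cat F]$ crossing $\cat D_n$ and edges of the bipartite Kneser graph between $\binom{[n-1]}{k-1}$ and $\binom{[n-1]}{k}$, and then states the corollary as an immediate consequence of combining \cref{sqrtlb} with \cref{singvalkneser}. Your parameter-matching ($d_U = \binom{n-k}{k}$, $d_V = \binom{n-k-1}{k-1}$, $\sigma_2/\sigma_1 \le k/(n-k)$ via the second bound in \cref{singvalkneser} applied to $(n-1,k-1,k)$, and the symmetry under swapping sides of the bipartition for the second inequality) is all accurate and fills in precisely the routine details the paper leaves implicit.
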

\noindent (Note that we will only consider $\cat {F}$ of size at most the union of $s+1$ stars, so $\beta$ is always small and we do not need the extra factor of $1-\beta$ in the first inequality of \cref{sqrtlb}.)

\section{Proof of Theorem \ref{thm:main}}
\label{sec:lowerbound}
In this section, we prove Theorem \ref{thm:main}. Throughout, we set
\[p := \frac{k}{n} \le \frac{1}{10000s^5}.\]

\subsection{$\cat F$ is dense in some star}

 The following notation is used only in the proof of \cref{gammamax}.
\begin{notation}[Symmetric sums]
If $\vc a = (a_1, \ldots, a_n)$ and $\vc r = (r_1, \ldots, r_l)$, then we write
\[\vc a^{\vc r} := \sum_{i_1} \sum_{i_2 \neq i_1} \sum_{i_3 \notin \{i_1, i_2\}} \cdots \sum_{i_l \notin \{i_1, \ldots, i_{l-1}\}} a_{i_1}^{r_1} \cdots a_{i_l}^{r_l}.\]
For example,
\[\left(\vc a^{(1)}\right)^2 = \left(\sum_{i=1}^n a_i\right)^2 
= \sum_{i} a_i^2 + \sum_{i}\sum_{j\neq i} a_i a_j = \vc a^{(2)} + \vc a^{(1, 1)}.\]
Note that reordering the entries in $\vc r$ has no effect on the sum $\vc a^{\vc r}$.
\end{notation}
\begin{notation}[Falling factorials]
For integers $1 \leq k \leq n$ we shall write $n^{\ul{k}}$ for the $k$th \emph{falling factorial} $n(n-1)\cdots (n-k+1)$ and 
moreover for $p = \frac{k}{n}$ we write 
\[p^{\ul{i}} := \frac{k^{\ul{i}}}{n^{\ul{i}}} = \frac{k(k-1)\cdots (k-i+1)}{n(n-1)\cdots (n-i+1)}.\]
Note that $p^{\ul{i}}$ depends on $k$ and $n$ before reducing the fraction to lowest terms, and when $k \le n$, 
\[p^{\ul{i}} \le p^i.\]
\end{notation}
\begin{lemma} \label{extendsumby1}
If $\sum_{i = 1}^n a_i = 0$, then 
\[\vc a^{(r_1, r_2, \ldots, r_{l-1}, 1)} = 
-\vc a^{(r_1 + 1, r_2, \ldots, r_{l-1})}
-\vc a^{(r_1, r_2+1, \ldots, r_{l-1})}
-\cdots
-\vc a^{(r_1, r_2, \ldots, r_{l-1}+1)}.\]
\end{lemma}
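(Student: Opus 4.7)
The plan is to fix the first $l-1$ indices and use the assumption $\sum_{i=1}^n a_i = 0$ to rewrite the innermost sum. By definition,
\[
\vc a^{(r_1, \ldots, r_{l-1}, 1)} = \sum_{i_1, \ldots, i_{l-1} \text{ distinct}} a_{i_1}^{r_1} \cdots a_{i_{l-1}}^{r_{l-1}} \left(\sum_{i_l \notin \{i_1, \ldots, i_{l-1}\}} a_{i_l}\right).
\]
The inner sum equals $\sum_{i=1}^n a_i - \sum_{j=1}^{l-1} a_{i_j}$, which by hypothesis reduces to $-\sum_{j=1}^{l-1} a_{i_j}$.

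Substituting this in and swapping the order of summation, one obtains
\[
\vc a^{(r_1, \ldots, r_{l-1}, 1)} = -\sum_{j=1}^{l-1} \sum_{i_1, \ldots, i_{l-1} \text{ distinct}} a_{i_1}^{r_1} \cdots a_{i_j}^{r_j + 1} \cdots a_{i_{l-1}}^{r_{l-1}},
\]
and each of the $l-1$ inner sums is, by definition, precisely $\vc a^{(r_1, \ldots, r_j + 1, \ldots, r_{l-1})}$. This matches the right-hand side term by term.

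There is no real obstacle here; the only thing to be careful about is the bookkeeping of the indices (that restricting $i_l \notin \{i_1, \ldots, i_{l-1}\}$ and then subtracting off the $a_{i_j}$ terms correctly accounts for the distinctness constraint when one of the existing exponents is incremented). Since each $i_j$ was already distinct from the others, increasing its exponent $r_j$ by one does not introduce any new collisions, so the symmetric-sum notation on the right-hand side is exactly what appears.
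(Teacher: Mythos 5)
Your proof is correct and is essentially the same as the paper's: both substitute $\sum_{i_l \notin \{i_1,\ldots,i_{l-1}\}} a_{i_l} = -a_{i_1}-\cdots-a_{i_{l-1}}$ into the innermost summation and then distribute. The extra remark about distinctness bookkeeping is a nice clarification but does not change the argument.
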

\begin{proof}
Simply substitute 
\[\sum_{i_l \notin \{i_1, \ldots, i_{l-1}\}} a_{i_l} = -a_{i_1} - a_{i_2} - \cdots -a_{i_{l-1}}\]
into the innermost summand in the definition of $\vc a^{\vc r}$.
\end{proof}

In the following lemma, and its proof, we identify $\binom{[n]}{k}$ with the set
$$\left\{\vc x\in \{0, 1\}^n: \sum_{i=1}^n x_i = k\right\},$$ and we write $\EE(f)$ for the expectation of $f = f(\vc x)$ when $\vc x$ is a uniform random element of this set. So, for example, $\EE(x_1) = k/n = p$ and $\EE(x_1 x_2) = (k/n) \cdot (k-1)/(n-1) = p^{\ul{2}}$.

\begin{lemma}[Large $\norm{f_1}_2$ implies dense in some star] 
\label{lincompduetostar}
For any $n \ge 2k+1$ and any $\cat F \subseteq \binom{[n]}{k}$ with $\cat F \neq \varnothing$, let $f = \mathbbm 1_{\cat F}$ be the indicator function of $\cat F$, and $\alpha = \EE(f) = \left. \norm{f}_2^2 \middle/ \binom{[n]}{k} \right.$ be the density of $\cat F$. Let $f_1$ denote the orthgonal projection of $f$ onto $Q_1^{(k)} = \operatorname{Span}\{\vc x \mapsto x_i-k/n:\ i \in [n]\}$ (see (\ref{eq:useful})), and let $\eta = \norm{f_1}_2^2 / \norm{f}_2^2$. For each $i \in [n]$, let $\gamma_i$ denote the density of $\cat F$ in the star $\cat D_i$, i.e.
\[\gamma_i := \frac{\abs{\cat F \cap \mathcal{D}_i}}{\binom{n-1}{k-1}},\]
and let $\gamma_\text{max}$ be the largest of the $\gamma_i$'s. Then
\[\eta^3 \le \left(\frac{n-1}{n-k}\right)^3 \gamma_\text{max}^2 + 3 \left(\frac{n-1}{n-k}\right)^2 \eta \alpha.\]
\label{gammamax}
\end{lemma}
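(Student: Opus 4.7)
The plan is to combine an explicit orthogonal-projection computation with H\"older's inequality (the three-factor Cauchy--Schwarz) to extract a cubic bound on $\eta$. Since $f_1 \in Q_1^{(k)}$, I would first write $f_1(\vc x) = \sum_{i=1}^n a_i(x_i - p)$ with $\sum_i a_i = 0$. Solving the orthogonality conditions $\inner{f - f_1, x_j - p} = 0$ for all $j$, and using that the Gram matrix of $\{x_i - p\}_i$ on $\mathbbm{1}^\perp$ is a positive scalar multiple of the identity, yields $a_i = \tfrac{n-1}{n-k}(\gamma_i - \alpha)$. In particular $|a_i| \le c^{-1}\gamma_{\max}$ where $c := (n-k)/(n-1)$, and $\EE(f_1^2) = \eta\alpha = \tfrac{k(n-k)}{n(n-1)}\sum_i a_i^2$ gives the key identity $\sum_i(\gamma_i - \alpha)^2 = c\eta\alpha/p$.

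Next I would apply H\"older's inequality with equal exponents $(3,3,3)$ to the identity $\inner{f, f_1} = \norm{f_1}_2^2 = \eta\alpha\binom{n}{k}$, obtaining
\[\inner{f, f_1}^3 \le \inner{f, \mathbbm 1}^2 \cdot \inner{f, f_1^3},\]
i.e.\ $\eta^3\alpha \le \EE(f \cdot f_1^3)$ (after noting the argument is symmetric under $f_1 \mapsto -f_1$, since $\sum a_i = 0$ is preserved, so we may fix the sign so that the right-hand side is nonnegative). I would then expand $\EE(f\cdot f_1^3) = \sum_{i,j,k}a_ia_ja_k \EE(f x_ix_jx_k)$ using $\EE(fx_i) = p\gamma_i$, $\EE(fx_ix_j) = p^{\ul{2}}\gamma_{ij}$ for $i \neq j$, and $\EE(fx_ix_jx_k) = p^{\ul{3}}\gamma_{ijk}$ for distinct indices. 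Substituting $\gamma_i = \alpha + ca_i$ and invoking \cref{extendsumby1} (via $\sum_i a_i = 0$) would then rewrite the expansion as a combination of the symmetric sums $\vc a^{(2)}$ and $\vc a^{(3)}$ plus $Q_2^{(k)}$- and $Q_3^{(k)}$-corrections coming from the ``new'' densities $\gamma_{ij}-\alpha$ and $\gamma_{ijk}-\alpha$.

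The main obstacle will be bounding these correction terms. The $\vc a^{(3)} = c^{-3}\sum_i(\gamma_i - \alpha)^3$ contribution---which arises with a coefficient $3$ from the three ``two-equal, one-distinct'' index patterns in the expansion of $\EE(fx_ix_jx_k)$---is controlled using the elementary bound $|\sum_i(\gamma_i - \alpha)^3| \le \gamma_{\max}\sum_i(\gamma_i - \alpha)^2 = c\gamma_{\max}\eta\alpha/p$, which produces exactly the linear-in-$\eta$ term $3c^{-2}\eta\alpha^2$ on the right. The $Q_2^{(k)}$- and $Q_3^{(k)}$-corrections I would handle using the positive semidefiniteness of the Gram matrix $M_{ij} := \EE(fx_ix_j)$ (which equals the Gram matrix of $\{\mathbbm 1_F\}_{F\in\cat F}$) together with the uniform bound $|a_i|\le c^{-1}\gamma_{\max}$, yielding a contribution of at most $c^{-3}\alpha\gamma_{\max}^2$. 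Collecting all terms produces $\eta^3\alpha \le c^{-3}\alpha\gamma_{\max}^2 + 3c^{-2}\eta\alpha^2$, and dividing through by $\alpha$ gives the claimed cubic inequality.
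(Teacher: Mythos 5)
Your setup matches the paper's: you identify $a_i = \tfrac{n-1}{n-k}(\gamma_i - \alpha)$ by the orthogonality conditions, derive $\EE(f_1^2) = (p - p^{\ul 2})\vc a^{(2)}$, and then aim to lift this to a cubic inequality in $\eta$ via H\"older applied to $\inner{f, f_1}$. But the H\"older step as stated has a genuine sign problem. H\"older's inequality gives
\[
\EE(f f_1)^3 = \EE\bigl(f^{1/3}\cdot f^{1/3}\cdot f^{1/3} f_1\bigr)^3 \le \EE(f)^2\, \EE\bigl(f\,\abs{f_1}^3\bigr),
\]
i.e.\ the third factor carries an absolute value. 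This is \emph{not} the same as $\EE(f)^2\,\EE(f f_1^3)$: since $\abs{f_1}^3 \ge f_1^3$ pointwise, your intended bound $\EE(ff_1)^3 \le \EE(f)^2\EE(ff_1^3)$ does not follow from H\"older and can fail. The proposed fix of replacing $f_1$ by $-f_1$ is not available: $f_1$ is \emph{the} orthogonal projection of $f$ onto $Q_1^{(k)}$ (so the $a_i$'s are determined, not chosen), and $\inner{f, f_1} = \norm{f_1}_2^2 > 0$ already pins down the sign. Moreover, having an absolute value inside $\EE(f\abs{f_1}^3)$ would wreck the polynomial expansion in the $a_i$'s that your subsequent analysis needs.

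The paper sidesteps this entirely by going one power higher: it applies H\"older in the form $\EE(ff_1)^4 \le \EE(f^{4/3})^3\,\EE(f_1^4)$, so the relevant moment is $\EE(f_1^4)$, which is automatically nonnegative and, crucially, depends only on the $a_i$'s (no intersection densities $\gamma_{ij}$, $\gamma_{ijk}$ appear). The expansion of $\EE(f_1^4)$ via the symmetric-sum calculus and \cref{extendsumby1} reduces cleanly to $\vc a^{(4)}$ and $(\vc a^{(2)})^2$, giving $\EE(f_1^4) \le p(\max_i a_i^2)\vc a^{(2)} + 3p^2(\vc a^{(2)})^2$; the H\"older step then produces exactly the cubic inequality in $\eta$. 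Your route would additionally require controlling $Q_2^{(k)}$- and $Q_3^{(k)}$-corrections coming from $\gamma_{ij}, \gamma_{ijk}$, which the sketch leaves unjustified and which the fourth-moment approach avoids altogether. To repair your proof you should switch to the fourth moment of $f_1$ (or otherwise deal honestly with $\EE(f\abs{f_1}^3)$), at which point you essentially recover the paper's argument.
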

\begin{proof}

First we evaluate the coefficients of $ f_1 = \sum_{i = 1}^n a_i x_i$. 
Note that since $x_1 + x_2 + \cdots + x_n = k$ for all $\vc x$, the constant part $f_0 = \alpha$ can also be written as $ \sum_{i = 1}^n \frac{\alpha}{k} x_i$, so 
it suffices to determine the coefficients of $f_0 + f_1 = \sum_{i = 1}^n b_i x_i = \sum_{i = 1}^n (a_i + \alpha / k) x_i$.

Since $f_0+f_1$ is the orthogonal projection of $f$ onto the linear span of the functions $\vc x \mapsto x_i$, the vector $\vc b$ is the unique stationary point to the (convex) quadratic form $H = \EE\left(( \sum_{i = 1}^n b_i x_i - f)^2\right)$. The partial derivative of $H$ is given by
\begin{align*}
    \frac{\partial H}{\partial b_i} 
    &= \EE\left( 2 x_i \left( \sum_{i = 1}^n b_i x_i - f\right)\right) \\
    &= 2b_i \EE(x_i^2) + \sum_{j \neq i} 2 b_j \EE (x_i x_j) - 2\EE (x_i f) \\
    &= 2b_i p + \sum_{j \neq i} 2b_j p^{\ul{2}} - 2p \gamma_i \\
    &= 2p \left(b_i \left(1 -  \frac{k-1}{n-1}\right) + \frac{k-1}{n-1} \sum_{j = 1}^n b_j -  \gamma_i \right) \\
    &=  2p \left(b_i \frac{n - k}{n-1} + \frac{k-1}{n-1} B -  \gamma_i \right),
\end{align*}
where $B: = \sum_{j=1}^n b_j$. Hence, the stationary point satisfies 
\[b_i = \frac{n-1}{n-k} (\gamma_i + C),\]
where $C$ does not depend upon $i$.

Since $\sum_i \gamma_i = n \alpha$ (the $n$ stars cover each $k$-set equally many times, viz., $k$ times, so the average of the densities of $\cat F$ in all $n$ stars is just its density in the whole of $\binom{[n]}{k}$), and we know the $a_i$'s differ only from the $b_i$'s by an additive constant and satisfy $\sum_i a_i = 0$,
we may conclude that 
\[a_i = \frac{n-1}{n-k} (\gamma_i - \alpha) \quad \forall i \in [n].\]
We therefore have
\begin{align*} 
    \eta \alpha = \EE(f_1^2) &= \EE\left(\left(\sum_i a_i x_i\right)^2 \right)  \\
    &= \vc a^{(2)} \EE(x_1^2) + \vc a^{(1, 1)} \EE(x_1 x_2) \\
    &= \vc a^{(2)} p + \vc a^{(1, 1)} p^{\ul{2}} \\
    &= \vc a^{(2)} (p - p^{\ul{2}}) & \text{ by \cref{extendsumby1}}.
\end{align*}
By \cref{extendsumby1}, we have
\[\vc a^{(3, 1)} = -\vc a^{(4)},\]
\[ \vc a^{(2, 1, 1)} = - \vc a^{(3, 1)} -\vc a^{(2, 2)} = \vc a^{(4)} - \vc a^{(2, 2)},\]
and
\[ \vc a^{(1, 1, 1, 1)} = -3 \vc a^{(2, 1, 1)} = -3 \vc a^{(4)} +  3\vc a^{(2, 2)}. \]
Hence,
\begin{align*}
    \EE(f_1^4) &= \EE\left(\left(\sum_i a_i x_i\right)^4 \right)  \\
    &= \vc a^{(4)} \EE(x_1^4) + 4\vc a^{(3, 1)} \EE(x_1^3 x_2) + 3\vc a^{(2, 2)} \EE(x_1^2 x_2^2)+ 6 \vc a^{(2, 1, 1)} \EE(x_1^2 x_2 x_3) +\\
    & \ \quad \vc a^{(1, 1, 1, 1)} \EE(x_1 x_2 x_3 x_4) \\
    &= \vc a^{(4)} p + 4\vc a^{(3, 1)} p^{\ul{2}} + 3\vc a^{(2, 2)} p^{\ul{2}}+ 6 \vc a^{(2, 1, 1)} p^{\ul{3}} + \vc a^{(1, 1, 1, 1)} p^{\ul{4}} \\ 
    &= \vc a^{(4)} p - 4\vc a^{(4)} p^{\ul{2}} + 3\vc a^{(2, 2)} p^{\ul{2}}+ 6 (\vc a^{(4)} - \vc a^{(2, 2)}) p^{\ul{3}} + (-3 \vc a^{(4)} +  3\vc a^{(2, 2)}) p^{\ul{4}} \\
    &= \left(p - 4p^{\ul{2}} + 6p^{\ul{3}} -3p^{\ul{4}}\right)  \vc a^{(4)} + 
    \left(3p^{\ul{2}} - 6p^{\ul{3}} + 3p^{\ul{4}}\right)  \vc a^{(2, 2)} \\
    &= \left(p - 4p^{\ul{2}} + 6p^{\ul{3}} -3p^{\ul{4}}\right)  \vc a^{(4)} + 
    \left(3p^{\ul{2}} - 6p^{\ul{3}} + 3p^{\ul{4}}\right)  \left(\left(\vc a^{(2)}\right)^2 - \vc a^{(4)}\right) \\
    &= \left(p - 7p^{\ul{2}} + 12p^{\ul{3}} -6p^{\ul{4}}\right)  \vc a^{(4)} + 
    \left(3p^{\ul{2}} - 6p^{\ul{3}} + 3p^{\ul{4}}\right) \left(\vc a^{(2)}\right)^2 \\
    &\le p\vc a^{(4)} + 3p^2 \left(\vc a^{(2)}\right)^2  \hspace{20em} \text{ using } p^{\ul{2}} \ge 2 p^{\ul{3}}\\
    &\le p \left(\max_i a_i^2\right) \cdot \vc a^{(2)} + 3p^2 \left(\vc a^{(2)}\right)^2. \numberthis \label{ineq:f14}
\end{align*}
We have seen that 
\[\vc a^{(2)} = \frac{\EE(f_1^2) }{ p - p^{\ul{2}}} = \frac{1}{1 - \frac{k-1}{n-1}} \cdot \frac{ \eta \alpha }{ p} = \frac{n-1}{n-k} \cdot \frac{ \eta \alpha }{ p}.\] Also, 
\[\alpha = \frac{1}{n} \sum_{i = 1}^n \gamma_i,\]
so $\gamma_\text{max} \ge \alpha > 0$, and we have 
\[\max_i a_i^2 = \left(\frac{n-1}{n-k}\right)^2 \max_i  (\gamma_i - \alpha)^2 \le  \left(\frac{n-1}{n-k}\right)^2 \gamma_\text{max}^2,\]
so (\ref{ineq:f14}) gives 
\[ \EE(f_1^4) \le \left(\frac{n-1}{n-k}\right)^3 \gamma_\text{max}^2 \cdot \eta \alpha + 3 \left(\frac{n-1}{n-k}\right)^2 \eta^2 \alpha^2.\]
Since $f_1$ is an orthogonal projection of $f$, we have 
\begin{align*}
    \eta^4 \alpha^4 = \EE(f_1^2)^4 &= \EE(f f_1)^4 \\
    &\le \EE(f^{4/3})^3 \EE (f_1^4) &\text{ by H\" older's inequality} \\
    &\le \alpha^3 \cdot\left(\left(\frac{n-1}{n-k}\right)^3 \gamma_\text{max}^2 \cdot \eta \alpha + 3 \left(\frac{n-1}{n-k}\right)^2 \eta^2 \alpha^2 \right),
\end{align*}
so 
\[\eta^3 \le \left(\frac{n-1}{n-k}\right)^3 \gamma_\text{max}^2 + 3 \left(\frac{n-1}{n-k}\right)^2 \eta \alpha,\]
completing the proof of the lemma.
\end{proof}

\begin{lemma}[Small $\norm{f_1}_2$ implies large average degree] 
\label{thirdorder}
For any $n\ge 2 k+1$ and $\cat F \subseteq \binom{[n]}{k}$ with $\cat F \neq \varnothing$, let $f = \mathbbm 1_{\cat F}$, let $\alpha = \EE(f) = \left. \norm{f}_2^2 \middle/ \binom{[n]}{k} \right.$ and let $\eta = \norm{f_1}_2^2 / \norm{f}_2^2$. Then

\[\Delta(\cat F) \ge \frac{2e(\cat F)}{\abs{\cat F}} \ge \left(\alpha - \frac{ k}{n-k} \left(\eta + \left(\frac{k}{n-k}\right)^2\right) \right) \binom{n-k}{k}.\]
\end{lemma}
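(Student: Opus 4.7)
The first inequality $\Delta(\cat F) \ge 2e(\cat F)/\abs{\cat F}$ is just the fact that the maximum degree dominates the average degree, so the real task is to lower-bound $2e(\cat F) = \inner{f, Af}$, where $A$ is the adjacency matrix of $K(n,k)$. I would use the eigenspace decomposition established in \cref{singvalkneser}: writing $f = \sum_{i=0}^k f_i$ with $f_i \in Q_i^{(k)}$, the orthogonality of these subspaces together with $A f_i = (-1)^i \binom{n-k-i}{k-i} f_i$ gives
\[
\inner{f, Af} = \sum_{i=0}^k (-1)^i \binom{n-k-i}{k-i}\norm{f_i}_2^2.
\]
The $i = 0$ term is the main contribution: $f_0 = \alpha \mathbbm 1$, so $\norm{f_0}_2^2 = \alpha^2 \binom{n}{k}$ and this term equals $\binom{n-k}{k}\alpha^2\binom{n}{k}$. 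The $i=1$ term is simply $-\binom{n-k-1}{k-1}\eta\alpha\binom{n}{k}$ by the definition of $\eta$.

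For $i \ge 2$ I would exploit the alternation of signs in the eigenvalues. Even-index contributions are nonnegative and can be freely discarded in a lower bound. For odd $i \ge 3$, the eigenvalues $\binom{n-k-i}{k-i}$ are decreasing in $i$ on $[0,k]$ whenever $n \ge 2k+1$ (a one-line check of the ratio of consecutive terms), so each is at most $\binom{n-k-3}{k-3}$; combined with $\sum_{i \ge 3,\, i \text{ odd}} \norm{f_i}_2^2 \le \norm{f}_2^2 = \alpha\binom{n}{k}$, this yields
\[
\sum_{i=2}^k (-1)^i \binom{n-k-i}{k-i}\norm{f_i}_2^2 \;\ge\; -\binom{n-k-3}{k-3}\,\alpha\binom{n}{k}.
\]
Assembling the three contributions, dividing through by $\abs{\cat F} = \alpha\binom{n}{k}$, and invoking $\binom{n-k-1}{k-1}/\binom{n-k}{k} = k/(n-k)$ together with $\binom{n-k-3}{k-3}/\binom{n-k}{k} \le (k/(n-k))^3$ (each factor $(k-j)/(n-k-j)$ is at most $k/(n-k)$ when $n \ge 2k+1$) produces exactly the claimed inequality.

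The main subtlety, and the reason this step is more than routine bookkeeping, is the separation of even and odd indices above $2$. A crude bound replacing every $\binom{n-k-i}{k-i}$ for $i \ge 2$ by the second-largest absolute eigenvalue $\binom{n-k-2}{k-2}$ would produce an error of order $(k/(n-k))^2$ --- too weak to match the statement. The extra factor of $k/(n-k)$ in the bound reflects the fact that the $i=2$ eigenvalue is positive and therefore helpful rather than harmful; once that term is dropped, the worst remaining negative contribution comes from the $i=3$ eigenspace, which is what manifests as the cubic term $(k/(n-k))^3$ inside the parentheses.
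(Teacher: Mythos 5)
Your proposal is correct and follows essentially the same route as the paper: decompose $f$ into eigencomponents $f_0, \ldots, f_k$ of $K(n,k)$, take the $i=0$ and $i=1$ terms exactly, exploit the sign of the $i=2$ term to discard it, and bound the remaining tail by the $i=3$ eigenvalue magnitude times $\norm{f}_2^2$. The only cosmetic difference is that you discard all even-index terms $i \geq 2$ before bounding the odd tail, whereas the paper discards only the $i=2$ term and then bounds the whole tail $i \geq 3$ (even and odd alike) by $-\binom{n-k-3}{k-3}\norm{f}_2^2$, using that even terms are nonnegative and hence certainly above this floor — both choices give the same final estimate.
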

Less formally, this lemma says that if $\eta$ is small, then the average degree of the subgraph induced by $\cat F$ is not much less than $\alpha \binom{n-k}{k}$ (which is the approximate  average degree for a random family).
\begin{proof}[Proof of Lemma \ref{thirdorder}.]
In \cref{eigenvalkneser},  we have seen that the eigenvalues of the Kneser graph $K(n, k)$ are, in descending order of absolute value, as follows: 
\[\binom{n-k}{k},\ -\binom{n-k-1}{k-1},\ \binom{n-k-2}{k-2},\ \ldots,\ (-1)^k \binom{n-2k}{0}.\]
The function $f$ can be decomposed as a sum of (orthogonal) eigenvectors $f_0+f_1+\cdots +f_{k}$ corresponding to these eigenvalues, and note that $f_0 = \EE(f) \mathbbm 1 = \alpha \mathbbm 1$, so
\begin{align*}
    2e(\cat F) = \inner{f, Af} &= \binom{n-k}{k} \left\langle \alpha \mathbbm 1+ f_1 + f_2+ \cdots + f_k, \phantom{\binom{n}{k}}\right.\\ 
    &\phantom{= \binom{n-k}{k} \left\langle \alpha \right.} \alpha\mathbbm 1 - \frac{k}{n-k} f_1 + \frac{k(k-1)}{(n-k)(n-k-1)} f_2 \\
    &\phantom{\binom{n-k}{k} \left\langle \alpha \right. } \left. - \frac{k(k-1)(k-2)}{(n-k)(n-k-1)(n-k-2)} f_3 + \cdots + (-1)^k \frac{1}{\binom{n-k}{k}} f_k\right\rangle \\
    &\ge \binom{n-k}{k} \left(\alpha^2 \binom{n}{k}- \frac{k}{n-k} \alpha\eta \binom{n}{k} + \frac{k(k-1)}{(n-k)(n-k-1)} \cdot 0 \right.\\
    &\phantom{\binom{n-k}{k}} \hspace{-1em}\left. -\frac{k(k-1)(k-2)}{(n-k)(n-k-1)(n-k-2)}\left(\norm{f_3}_2^2 + \norm{f_4}_2^2 + \cdots +\norm{f_k}_2^2\right)\right) \\
    &\ge \binom{n-k}{k} \left(\alpha^2 \binom{n}{k}- \frac{k}{n-k} \alpha\eta \binom{n}{k} - \left(\frac{k}{n-k}\right)^3 \cdot  \norm{f}_2^2 \right) \\
    &= \binom{n-k}{k} \alpha\binom{n}{k} \left(\alpha - \frac{ k}{n-k} \left(\eta + \left(\frac{k}{n-k}\right)^2\right) \right) . 
\end{align*}
Dividing by $\abs{\cat F} = \alpha \binom{n}{k}$, we obtain the result.
\end{proof}

\cref{thirdorder} and \cref{gammamax} together will say if $\Delta(\cat F)$ is smaller than constructions in \cref{constructions}, then $\gamma_\text{max}$ is at least some constant. Now we shall use \cref{kneserexpander} to show that, once $\gamma_\text{max}$ is at least some constant, then indeed it cannot be much less than $\lambda/ (s+1)$.

\begin{lemma}
\label{extlem3}
If $n\ge 12sk$ and $\cat F \subseteq \binom{[n]}{k}$ has size parameter $\lambda \in [s, s+1]$, is not a union of $s$ stars, and minimizes the maximum degree subject to these conditions, then for any $i \in [n]$, if $\gamma_i = \left.\abs{\cat F_{\{i\}}^{\{i\}}}\middle/\binom{n-1}{k-1}\right. \ge c_0$, then
\[\gamma_i \ge \lambda/(s+1) - \sqrt{2(s+1)p/c_0}  - (s^2+4s) p .\]
\end{lemma}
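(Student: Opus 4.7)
Fix $i \in [n]$ with $\gamma_i \ge c_0$; we may assume $\gamma_i < \lambda/(s+1)$, else the conclusion is immediate. The strategy has three steps: (i) lower-bound $\Delta(\cat F)$ via the bipartite Expander Mixing Lemma of \cref{kneserexpander}; (ii) upper-bound $\Delta(\cat F)$ by comparison with the explicit family of \cref{explicitconstruction}, which qualifies as a competitor in the minimization; (iii) feed in the total size of $\cat F$ via (\ref{crudesize}) to extract the desired bound on $\gamma_i$.

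Setting $\cat C := \cat F_{\{i\}}^{\{i\}}$ (so $\abs{\cat C} = \gamma_i \binom{n-1}{k-1}$) and $\cat B := \cat F_{\{i\}}^{\varnothing}$ (so $\abs{\cat B} = \beta \binom{n-1}{k}$ for some $\beta \in [0,1]$), \cref{kneserexpander} gives $\Delta(\cat F) \ge \binom{n-k}{k}\bigl(\beta - \tfrac{k}{n-k}\sqrt{\beta/\gamma_i}\bigr)$. The family in \cref{explicitconstruction} has size parameter $\lambda$ and is not a union of $s$ stars (a size obstruction when $\lambda > s$; and when $\lambda = s$, its symmetric distribution across the $s+1$ stars $\cat D_1, \ldots, \cat D_{s+1}$ precludes equality with any union of $s$ stars), so it is a legitimate competitor in the minimization, giving $\Delta(\cat F) \le \bigl(\tfrac{s\lambda}{s+1} + 4sp\bigr)\binom{n-k-1}{k-1}$. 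Writing $q := k/(n-k) = \binom{n-k-1}{k-1}/\binom{n-k}{k}$, combining the two bounds and dividing by $\binom{n-k}{k}$ yields
\[ \beta \le q\Bigl(\tfrac{s\lambda}{s+1} + 4sp\Bigr) + q\sqrt{\beta/\gamma_i}. \]

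The size of $\cat F$ now feeds in via (\ref{crudesize}). The upper bound $\abs{\cat F} \le \lambda \binom{n-1}{k-1}$ translates into $\beta \le q(\lambda - \gamma_i) \le q(s+1) \le 2p(s+1)$ (using $q = p/(1-p) \le 2p$, valid since $p \le 1/2$), whence $\sqrt{\beta/\gamma_i} \le \sqrt{2(s+1)p/c_0}$. The matching lower bound in (\ref{crudesize}) translates into $\beta \ge q\bigl(\lambda - \gamma_i - \binom{s+1}{2}\tfrac{k-1}{n-1}\bigr)$. Chaining these estimates with the displayed inequality and dividing by $q$ gives
\[ \lambda - \gamma_i - \binom{s+1}{2}\tfrac{k-1}{n-1} \le \tfrac{s\lambda}{s+1} + 4sp + \sqrt{2(s+1)p/c_0}, \]
which rearranges to $\gamma_i \ge \tfrac{\lambda}{s+1} - 4sp - \binom{s+1}{2}\tfrac{k-1}{n-1} - \sqrt{2(s+1)p/c_0}$. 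Finally the elementary estimate $4sp + \binom{s+1}{2}\tfrac{k-1}{n-1} \le 4sp + \binom{s+1}{2}p = \tfrac{s(s+9)}{2}p \le (s^2+4s)p$ (valid for $s \ge 1$) delivers the claimed inequality. The only delicate point is the ``legitimate competitor'' claim at the boundary $\lambda = s$; this is routine once one notices that \cref{explicitconstruction} contains all sets of the form $\{j\} \cup \{s+2, \ldots, s+k\}$ for every $j \in [s+1]$, and no such $(s+1)$-indexed family can lie inside any union of $s$ stars.
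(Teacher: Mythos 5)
Your proof is correct and follows essentially the same route as the paper's: apply the star/complement Expander Mixing Lemma (\cref{kneserexpander}) to the pair $(\cat C, \cat B) = (\cat F_{\{i\}}^{\{i\}}, \cat F_{\{i\}}^{\varnothing})$, compare the resulting lower bound on $\Delta(\cat F)$ with the upper bound (\ref{upperbound}) coming from \cref{explicitconstruction}, and feed in (\ref{crudesize}) to trade the resulting bound on $\beta$ for a bound on $\gamma_i$. You rearrange the algebra slightly (normalising by $\binom{n-1}{k-1}$ rather than $\binom{n}{k}$, and combining the upper/lower degree bounds before inserting (\ref{crudesize})), but the content is identical and the arithmetic checks out, including the final estimate $\frac{s(s+9)}{2} \le s(s+4)$ for $s\ge 1$.

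One small caveat on the ``legitimate competitor'' point you flag: your stated justification is not quite right. The $s+1$ sets $\{j\}\cup\{s+2,\ldots,s+k\}$, $j\in[s+1]$, \emph{can} all lie inside a union of $s$ stars when $k\ge 2$ (take any $S\ni s+2$ with $|S|=s$), so containing this particular subfamily does not by itself rule out equality with some $\cat D_S$. The claim is nonetheless true: if $\cat E=\cat D_S$ for some $|S|=s$, then the core sets $\{i,j\}\cup A$ (for $\{i,j\}\subseteq[s+1]$ and $A$ disjoint from $[s+1]\cup S$, which exists since $n\ge 12sk$) force $S\subseteq[s+1]$, after which the sets $\{j\}\cup\{s+2,\ldots,s+k\}$ for the missing $j\in[s+1]\setminus S$ give a contradiction; alternatively, for $\lambda=s$ one has $t<n$ (since $n\ge(s+1)k$), so no full star $\cat D_m$ is contained in $\cat E$, whence no $\cat D_S\subseteq\cat E$. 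You are right that this point deserves mention — the paper glosses over it — just replace the one-line justification with either of these arguments.
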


\begin{proof}
Let $\cat B = \cat F^{\varnothing}_{\{i\}}$, $\beta = \left.\abs{\cat B}\middle/\binom{n-1}{k}\right.$, $\cat C = \cat F_{\{i\}}^{\{i\}}$,  and $
\gamma = \left.\abs{\cat C}\middle/\binom{n-1}{k-1}\right. $. By (\ref{crudesize}), we have
\[\lambda p - s^2 p^2 \le \beta (1 - p) +\gamma p \le \lambda p,\]
so 
\[\frac{(\lambda - \gamma)p - s^2 p^2 } {1-p} \le \beta \le \frac{(\lambda - \gamma)p}{1-p} < 2(s+1)p. \]
Suppose that $c_0 \le \gamma = \lambda/(s+1) - \varepsilon$. We are in exactly the setting of \cref{kneserexpander} (bipartite Kneser between $\binom{[n-1]}{k}$ and $\binom{[n-1]}{k-1}$), so 
\begin{align*}
\Delta(\cat F) \ge \frac{e(\cat B, \cat C)}{\abs{\cat C}} &\ge \binom{n - k}{k} \left( \beta - \frac{p}{1-p} \sqrt{\frac{\beta}{\gamma}}\right) \\
&\ge \binom{n - k}{k} \left(\frac{(\lambda - \gamma)p - s^2 p^2 } {1-p} - \frac{p}{1-p} \sqrt{\frac{2(s+1)p}{c_0}}\right) \\
&= \binom{n - k}{k}\frac{p}{1-p} \left((\lambda - \gamma) - s^2 p  - \sqrt{\frac{2(s+1)p}{c_0}}\right)\\
&= \binom{n - k -1}{k-1} \left(\left(\frac{s\lambda}{s+1} +\varepsilon\right) - s^2 p  - \sqrt{\frac{2(s+1)p}{c_0}}\right).
\end{align*}
Comparing this with the upper bound (\ref{upperbound}), we may conclude that 
\[\varepsilon \le (s^2 +4s) p + \sqrt{2(s+1)p/c_0}. \qedhere\] 
\end{proof}

\subsection{$\cat F$ is dense in $s+1$ stars}

The aim of this subsection is to show that if $\Delta(\cat F)$ is small, then \cref{extlem3} can be used several times to give $s+1$ `popular' elements in the family $\cat F$. Before proving the main result \cref{manylem3}, we need some straightforward bounds on binomial coefficients.

\begin{lemma}\label{binomratio}
Let $k \le m \le n$, then 
\[\left.\binom{m}{k} \middle/ \binom{n}{k}\right. \ge 1 - \frac{k(n-m)}{n-k+1}.\]
\end{lemma}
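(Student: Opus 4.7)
My plan is to expand the ratio of binomial coefficients as a falling product and apply the standard Weierstrass/Bernoulli-type inequality $\prod_i(1-a_i) \ge 1-\sum_i a_i$ (valid for $a_i \in [0,1]$).

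First I would write
\[
\frac{\binom{m}{k}}{\binom{n}{k}} = \prod_{i=0}^{k-1} \frac{m-i}{n-i} = \prod_{i=0}^{k-1}\left(1 - \frac{n-m}{n-i}\right).
\]
Since $m \ge k$ ensures $n-i \ge n-k+1 \ge 1$ for each $i \in \{0,1,\ldots,k-1\}$, and $n \ge m$ ensures each factor lies in $[0,1]$, the Weierstrass product inequality applies, giving
\[
\prod_{i=0}^{k-1}\left(1 - \frac{n-m}{n-i}\right) \ge 1 - \sum_{i=0}^{k-1} \frac{n-m}{n-i}.
\]

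Next, I would replace every denominator in the sum by its smallest value, namely $n-k+1$ (attained when $i=k-1$), to obtain
\[
\sum_{i=0}^{k-1} \frac{n-m}{n-i} \le k \cdot \frac{n-m}{n-k+1},
\]
and hence $\binom{m}{k}/\binom{n}{k} \ge 1 - k(n-m)/(n-k+1)$, as claimed.

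There is no real obstacle here; the only things to check are that each factor in the product is non-negative (so that the Weierstrass inequality applies) and that the denominator bound $n-i \ge n-k+1$ goes in the correct direction. Both are immediate from $k \le m \le n$. One could alternatively give a probabilistic proof by observing that $1 - \binom{m}{k}/\binom{n}{k}$ is the probability that a uniformly random $k$-subset of $[n]$ meets $[n]\setminus[m]$, then applying a union bound; but the purely algebraic proof above is cleaner and yields the stated bound directly.
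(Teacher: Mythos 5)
Your proof is correct and is essentially the same as the paper's: both expand the ratio as the falling product $\prod_{i=0}^{k-1}(m-i)/(n-i)$ and then linearize. The paper first bounds every factor below by the worst one, $(m-k+1)/(n-k+1)$, and then applies Bernoulli's inequality $(1-x)^k \ge 1-kx$; you apply the Weierstrass inequality $\prod(1-a_i)\ge 1-\sum a_i$ first and bound the denominators afterwards, which is the same calculation in the opposite order.
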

\begin{proof}
We have
\begin{align*}\left.\binom{m}{k} \middle/ \binom{n}{k}\right. 
&= \frac{m(m-1)\cdots (m-k+1)}{n(n-1)\cdots(n-k+1)}  \\
&\ge \left(\frac{m-k+1}{n-k+1}\right)^k\\
&= \left(1 - \frac{n-m}{n-k+1}\right)^k \\
&\ge 1 - \frac{k(n-m)}{n-k+1},
\end{align*}
as required.
\end{proof}

\begin{lemma}[Conversion between $\binom{[n]}{k}$ and $\binom{[n]\setminus [s]}{k}$]
\label{convert}
Suppose that $s \leq  k \leq n/2$ and that $\cat F \subseteq \binom{[n]}{k}$. Writing \[\gamma := \left.\abs{\cat F_{\{1\}}^{\{1\}}} \middle/ \binom{n - 1}{k - 1}\right.
\quad \text{and } \quad \widetilde \gamma := \left.\abs{\cat F^{\{1\}}_{[s+1]} }\middle/ \binom{n - s -1}{k-1}\right. ,\]
we have
\[\widetilde \gamma + sp> \gamma \ge  \widetilde \gamma (1 - 2sp). \]
\end{lemma}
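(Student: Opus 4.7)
The numerator $\gamma\binom{n-1}{k-1}$ counts the sets $F \in \cat F$ with $1 \in F$, while $\widetilde\gamma\binom{n-s-1}{k-1}$ counts the subcollection with the stricter requirement $F \cap [s+1] = \{1\}$. So the strategy is to write
\[
\gamma\binom{n-1}{k-1} \;=\; \widetilde\gamma\binom{n-s-1}{k-1} \;+\; R,
\]
where $R$ is the number of $F \in \cat F$ with $1 \in F$ and $F \cap \{2,\ldots,s+1\} \neq \varnothing$. A union bound over which element $j \in \{2,\ldots,s+1\}$ lies in $F$ yields $0 \le R \le s\binom{n-2}{k-2}$, since for each fixed $j$ the number of $k$-subsets of $[n]$ containing both $1$ and $j$ is $\binom{n-2}{k-2}$.

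From here both inequalities follow by routine estimates. For the upper bound on $\gamma$, use $\binom{n-s-1}{k-1} \le \binom{n-1}{k-1}$ together with the identity $\binom{n-2}{k-2}/\binom{n-1}{k-1} = (k-1)/(n-1) < k/n = p$, giving $\gamma < \widetilde\gamma + sp$. For the lower bound, drop the non-negative term $R$ and apply \cref{binomratio} with parameters $(n-1,k-1,n-s-1)$ in place of $(n,k,m)$ to obtain
\[
\frac{\binom{n-s-1}{k-1}}{\binom{n-1}{k-1}} \;\ge\; 1 - \frac{(k-1)s}{n-k+1} \;\ge\; 1 - 2sp,
\]
where the last step uses $n \ge 2k$ so that $n-k+1 > n/2$, hence $(k-1)s/(n-k+1) < 2sk/n = 2sp$. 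Combining these gives $\gamma \ge \widetilde\gamma(1-2sp)$, completing the proof. There is no real obstacle here — the only delicate point is tracking the correct hypothesis $k \le n/2$ that makes the conversion constant $2sp$ (rather than something weaker) go through.
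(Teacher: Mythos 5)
Your proposal is correct and follows essentially the same argument as the paper: the paper's two inequalities $\abs{\cat F_{\{1\}}^{\{1\}}} \ge \abs{\cat F^{\{1\}}_{[s+1]}}$ and $\abs{\cat F^{\{1\}}_{[s+1]}} \ge \abs{\cat F_{\{1\}}^{\{1\}}} - \sum_{i=2}^{s+1}\abs{\cat F_{\{1,i\}}^{\{1,i\}}}$ are precisely your $R \ge 0$ and $R \le s\binom{n-2}{k-2}$, and both then invoke \cref{binomratio} and the same elementary estimates. Packaging the difference into a single remainder $R$ is a minor expository improvement but not a different method.
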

\begin{proof}
Since $\abs{\cat F_{\{1\}}^{\{1\}}} \ge \abs{\cat F^{\{1\}}_{[s+1]} }$, we have $\gamma \binom{n-1}{k-1} \ge \widetilde \gamma \binom{n-s-1}{k-1}$, so
\begin{align*}
    \gamma &\ge \widetilde \gamma \left.\binom{n-s-1}{k-1} \middle/ \binom{n-1}{k-1}\right.   \\
    &\ge  \widetilde \gamma \left(1 - \frac{(k-1)s}{n-k+1}\right) &\text{by \cref{binomratio}} \\
    &\ge  \widetilde \gamma (1 - 2sp).
\end{align*}
On the other hand,
\[\abs{\cat F^{\{1\}}_{[s+1]} } \ge \abs{\cat F_{\{1\}}^{\{1\}}} - \sum_{i = 2}^{s+1} \abs{\cat F_{\{1, i\}}^{\{1, i\}}},\]
so\begin{align*} 
\widetilde \gamma \binom{n-s-1}{k-1} &\ge \gamma \binom{n-1}{k-1} - s\binom{n-2}{k-2}  \\
&= \left(\gamma - \frac{s(k-1)}{n-1}\right)\binom{n-1}{k-1} \\
&> (\gamma - sp) \binom{n-1}{k-1},\end{align*}
so $\widetilde \gamma > \gamma - sp$ (this follows from the above if $\gamma - sp \ge 0$, and is trivially true if $\gamma -sp < 0$).
\end{proof}

\begin{lemma}
\label{manylem3}
If $n \ge 100 s^2 k$ and $\cat F \subseteq \binom{[n]}{k}$ has size parameter $\lambda \in [s, s+1]$, is not a union of $s$ stars, and minimizes the maximum degree subject to these conditions, then there are at least $s+1$ elements $x \in [n]$ such that 
\[\abs{\cat F_{\{x\}}^{\{x\}}} \ge \binom{n-1}{k-1}\left(\frac{\lambda}{s+1} - \sqrt{40(s+1)p}  - (s^2+4s) p \right).\]
\end{lemma}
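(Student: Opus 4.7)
Our plan is to show that at least $s+1$ distinct elements $x\in[n]$ satisfy $\gamma_x := |\cat F^{\{x\}}_{\{x\}}|/\binom{n-1}{k-1}\ge c_0$, where $c_0 := 1/20$; since $\sqrt{2(s+1)p/c_0} = \sqrt{40(s+1)p}$, \cref{extlem3} applied to each such $x$ then yields the claimed bound. These popular elements $x_1,\ldots,x_{s+1}$ are constructed iteratively.

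For the base case $t=0$ we apply the spectral argument to $\cat F$ itself: since $\cat F$ minimises $\Delta$, (\ref{upperbound}) gives $\Delta(\cat F)\le(s\lambda/(s+1)+4sp)\binom{n-k-1}{k-1}$, so \cref{thirdorder} together with the size estimate (\ref{crudesize}) forces $\eta\ge\lambda/(s+1)-O(s^2p)\ge 1/3$; then \cref{lincompduetostar} gives $\gamma_{\text{max}}^2\ge\eta^3-3\eta\alpha\ge 1/30$, producing $x_1$. For the inductive step ($1\le t\le s$, popular elements relabelled to $[t]$), we set $\cat G := \cat F^\varnothing_{[t]}\subseteq\binom{[n-t]}{k}$ and run the same spectral argument on $\cat G$ in the $[n-t]$-universe to find an element $x_{t+1}\in[n]\setminus[t]$ with $\gamma_{x_{t+1}}(\cat G)\ge$ constant; the ratio $\binom{n-t-1}{k-1}/\binom{n-1}{k-1}\ge 1-tp$ from \cref{binomratio} then transfers this to $\gamma_{x_{t+1}}(\cat F)\ge c_0$.

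The main obstacle is that both of the naive bounds -- the upper bound $\Delta(\cat G)\le\Delta(\cat F)\le D$ and the inclusion--exclusion lower bound $|\cat G|\ge(\lambda-t)\binom{n-1}{k-1}-O(\binom{n-2}{k-2})$ -- are \emph{not} tight enough, the latter even degenerating to zero at the edge case $\lambda=s=t$. We remedy this by a double-counting estimate: for any $A\in\cat G$ the number of insider neighbours of $A$ (i.e.\ those lying in $\cat F\cap\bigcup_{i\in[t]}\cat D_i$) is at least $(\binom{n-k}{k}-\binom{n-k-t}{k})-(|\bigcup_{i\in[t]}\cat D_i|-|\cat F|+|\cat G|)$, since the second subtrahend counts every insider missing from $\cat F$. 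Combining this with $\deg_{\cat F}(A)\le D$ and the size-parameter lower bound on $|\cat F|$ yields \emph{simultaneously} a useful lower bound $|\cat G|\ge c\binom{n-k-1}{k-1}$ with $c\gtrsim s(s+1-\lambda)/(s+1)$ and a sharpened upper bound $\Delta(\cat G)\le|\cat G|-c\binom{n-k-1}{k-1}$. Feeding both into \cref{thirdorder} and \cref{lincompduetostar} on $\cat G$ gives $\eta(\cat G)$, and hence $\gamma'_{\text{max}}(\cat G)$, bounded below by a constant, with all binomial ratios being controlled via \cref{binomratio} and \cref{convert} under $n\ge 100s^2k$.
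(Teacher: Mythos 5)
The high-level plan (find $s+1$ elements $x$ with $\gamma_x \geq 1/20$ and apply \cref{extlem3} to each) is the same as the paper's, and your base case ($t=0$) is essentially correct and matches the paper's use of \cref{thirdorder} and \cref{lincompduetostar} on $\cat F$ itself. However, your inductive step contains a genuine gap: the double-counting bound on $\Delta(\cat G)$ is too crude to run the spectral argument on $\cat G$, and in fact fails whenever $pk = k^2/n$ is not small (which the hypothesis $n \geq 100 s^2 k$ allows).

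Here is the problem concretely. To extract a constant lower bound on $\eta(\cat G)$ from \cref{thirdorder}, you need
\[
\alpha(\cat G)\binom{n-t-k}{k} - \Delta(\cat G) \gtrsim p\binom{n-t-k}{k} \approx \binom{n-k-1}{k-1},
\]
where $\alpha(\cat G)\binom{n-t-k}{k} = |\cat G|\cdot\tfrac{\binom{n-t-k}{k}}{\binom{n-t}{k}}$. Plugging in your bound $\Delta(\cat G)\le |\cat G| - c\binom{n-k-1}{k-1}$, the left side becomes
\[
c\binom{n-k-1}{k-1} - |\cat G|\left(1 - \frac{\binom{n-t-k}{k}}{\binom{n-t}{k}}\right).
\]
Now $\binom{n-t-k}{k}/\binom{n-t}{k}$ is roughly $(1-p)^k$, and $|\cat G|$ is of order $\binom{n-1}{k-1}$, while $\binom{n-1}{k-1}/\binom{n-k-1}{k-1}$ is roughly $(1+p)^k$. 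So the ``bad'' term $|\cat G|(1-(1-p)^k)$ exceeds $c\binom{n-k-1}{k-1}$ as soon as $k^2/n$ exceeds a fixed constant, which certainly happens for $n = 100 s^2 k$ and $k$ large. (For $s=1$, $t=1$, $\lambda=1$ and $n=100k$, the discrepancy already turns negative once $k \gtrsim 50$.) Using the trivial bound $\Delta(\cat G)\le\Delta(\cat F)\le D$ instead does not help either, because then one would need $|\cat G|(1-p)^k \gtrsim D + \binom{n-k-1}{k-1}$, which would force $|\cat G| \gtrsim |\cat F|$, impossible once the $t$ popular elements each carry density $\gtrsim 1/20$. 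The contrast with your $t=0$ case is instructive: there $|\cat F|$ is only of order $p\binom{n}{k}$, so $\alpha\binom{n-k}{k} \approx \lambda\binom{n-k-1}{k-1}$ matches $D$ tightly. For $t\ge 1$ the restriction $\cat G = \cat F_{[t]}^\varnothing$ loses this tightness, and your worst-case insider-edge bound (which assumes every absent insider is a neighbour of every $A\in\cat G$) cannot recover it.

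The paper avoids this by never bounding $\Delta(\cat G)$ at all. It instead picks $s$ \emph{arbitrary} (not necessarily popular) elements, partitions $\cat F$ by intersection with $[s]$, and lower bounds $\Delta(\cat F)$ by the average degree within $\cat B = \cat F_{[s]}^\varnothing$ of the subgraph on $\cat B\cup\bigcup_i\cat C_{\{i\}}$. Crucially, $\sum_i e(\cat B,\cat C_{\{i\}})/|\cat B|$ is lower bounded via the Expander Mixing Lemma (\cref{kneserexpander}), which says each $A\in\cat B$ has roughly the expected number $\binom{n-s-k}{k-1}\delta_i$ of neighbours in $\cat C_{\{i\}}$ --- a per-vertex bound at the expectation level, unlike your adversarial one. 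The exponential factors then cancel cleanly, and the linear constraint (\ref{betadeltalb}) between $\beta$ and the $\delta_i$'s (derived from the size parameter) supplies exactly the missing information. Because the $s$ chosen elements are arbitrary, one run of this argument already yields a popular element outside every $s$-set, hence $s+1$ popular elements, with no iteration needed. To salvage your approach you would essentially have to replace the double-counting estimate by the expander-mixing estimate for edges into the stars $\cat D_1,\ldots,\cat D_t$, at which point you have reconstructed the paper's argument with arbitrary (rather than previously-found) elements.
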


\begin{proof}
Pick $s$ arbitrary elements from $[n]$, say $1, 2, \ldots, s$; we shall show there exists some other element $x \in [n] \setminus [s]$ such that $\abs{\cat F_{\{x\}}^{\{x\}}}$ satisfies the above bound. (Note that no assumptions here are made on the sizes of $\cat F_{\{1\}}^{\{1\}}, \cat F_{\{2\}}^{\{2\}}, \ldots, \cat F_{\{s\}}^{\{s\}}$.) This in turn implies that at least $s+1$ elements of $[n]$ satisfy the inequality.

We consider the partition of $\cat F$ into $2^s$ parts according to how each $A \in \cat F$ intersects $[s]$. Each of these $2^s$ parts corresponds to some $\cat F_{[s]}^I$ (for $I \subseteq [s]$).

Formally, let $\cat B := \cat F_{[s]}^\varnothing$, $\beta:= \abs{\cat B}/\binom{n-s}{k}$, and for $I \neq \varnothing$ let 
$\cat C_I: = \cat F_{[s]}^I$, and $\delta_I  := \abs{\cat C_I }/\binom{n-s }{k-\abs{I}}$. Equation (\ref{Fsize}) now gives
\[\beta \binom{n - s}{k} + \sum_{\substack{I \subseteq [s] \\ I \neq \varnothing}} \delta_I 
\binom{n - s}{k - \abs{I}} = (\lambda - s) \binom{n - s - 1}{k-1} + \sum_{j=1}^{s}\binom{s}{j}\binom{n-s}{k-j}.\]
Using $\delta_I \le 1$ for all $I$ with $\abs{I} \ge 2$, we have 
\[\beta \binom{n - s}{k} + \sum_{i = 1}^s \delta_i 
\binom{n - s}{k - 1} \ge (\lambda - s) \binom{n - s - 1}{k-1} + \binom{s}{1}\binom{n-s}{k-1}\]
where $\delta_i = \delta_{\{i\}}$. Dividing by $\binom{n-s}{k-1}$, we obtain
\begin{equation}\beta \frac{n - s - k + 1}{k} + \sum_{i = 1}^s \delta_i 
\ge (\lambda - s) \frac{n - s - k + 1}{n - s} + s \ge s \quad\text{since } \lambda \ge s, \label{betadeltalb}
\end{equation}
and 
\begin{equation*}
    \beta \ge \frac{k}{n - s - k + 1} \sum_{i = 1}^s ( 1 - \delta_i).
\end{equation*}
In particular, we have
\begin{equation}
    \beta \ge \frac{k}{n - s - k + 1}  ( 1 - \delta_i) \quad \text{for each } i. \label{betalb}
\end{equation}
By assumption, $\cat F$ is not a union of $s$ stars, so $\beta >0$. Note that $\cat B \subseteq \binom{[n]\setminus [s]}{k}$ and $\cat C_{\{i\}} \subseteq \binom{[n]\setminus [s]}{k-1}$, so we are in the setting of \cref{kneserexpander} (except that $n-1$ is replaced by $n - s$), so
\begin{align} \frac{e(\cat B, \cat C_{\{i\}})}{\abs{\cat B}} &\ge \binom{n - s - k}{k - 1} 
\left(\delta_i - \frac{k}{n - s - k + 1} \sqrt{\frac{\delta_i (1-\delta_i)}{\beta}}\right) 
\nonumber \\
&\ge \binom{n - s - k}{k - 1} 
\left(\delta_i - \sqrt{\frac{k}{n - s - k + 1}} \right) 
& \text{using (\ref{betalb}) and } \delta_i \le 1. \nonumber \\
&\ge \binom{n - s - k}{k - 1} 
\left(\delta_i - \sqrt{2p} \right). \label{BtoCi}
\end{align}
(The last line uses $p \le 1/(4s)$, so that $n \ge 4sk  \ge 2(s + k)$ and $\frac{k}{n-s-k} \le \frac{2k}{n} = 2p$.)

Applying \cref{thirdorder} to $\cat B \subseteq \binom{[n] \setminus [s]}{k}$, we have
\begin{align}\frac{2 e (\cat B) }{\abs{\cat B}} &\ge \binom{n - s - k}{k} \left(\beta - \frac{ k }{n - s - k} \left(\zeta +  \left(\frac{ k }{n - s - k}\right)^2 \right)\right)\nonumber\\
&=  \binom{n - s - k}{k - 1} \frac{n - s - 2k + 1}{k} \left(\beta - \frac{ k }{n - s - k} \left(\zeta +  \left(\frac{ k }{n - s - k}\right)^2 \right)\right)\nonumber \\ 
&\ge \binom{n - s - k}{k - 1} \left( \frac{n - s - 2k + 1}{k} \beta - \frac{ n-s - 2k + 1 }{n - s - k} \left(\zeta +  (2p)^2 \right)\right)\nonumber \\ 
&\ge \binom{n - s - k}{k - 1} \left( \frac{n - s - 2k + 1}{k} \beta - \zeta -4p^2\right),\label{BtoB}
\end{align}
where $\zeta = \norm{g_1}_2^2 / \norm{g}_2^2$ for $g = \mathbbm 1_{\cat B}: \binom{[n]\setminus [s]}{k} \to \{0, 1\}$, and $g_1$ denotes the `linear component' of $g$, i.e.\ the orthogonal projection of $g$ onto $Q_1^{(k)} = \text{Span}\{\mathbf{x} \mapsto x_i-k/(n-s):\ i \in [n]\setminus [s]\}$.

By (\ref{BtoB})$+ \sum_{i = 1}^s$(\ref{BtoCi}) (the number of edges can be simply added together, because the $\cat C_{\{i\}}$'s correspond to disjoint subfamilies of $\cat F$), we have
\begin{align}
    \phantom{\ge .}\Delta(\cat F) \nonumber
    &\ge \frac{\sum_{i = 1}^s e(\cat B, \cat C_{\{i\}}) + 2e(\cat B) }{\abs{\cat B}} \nonumber\\
    &\ge \binom{n - s - k}{k - 1} \left(\sum_{i = 1}^s \delta_i  - s\sqrt{2p}+ \frac{n - s - 2k + 1}{k} \beta - \zeta -4p^2\right)  \nonumber\\
     &\ge \binom{n - s - k}{k - 1} \left(\frac{n - s - 2k + 1}{n - s - k + 1} \left( \frac{n - s - k + 1}{k}\beta + \sum_{i = 1}^s \delta_i\right)- s\sqrt{2p}
      - \zeta -4p^2\right) \nonumber\\
     &\ge \binom{n - s - k}{k - 1} \left(\frac{n - s - 2k + 1}{n - s - k + 1} \left( (\lambda - s) \frac{n - s - k + 1}{n - s} + s\right)- s\sqrt{2p} - \zeta - 4p^2\right),  \hspace{2em}\label{s+1lowerbound}
\end{align}
where we have used (\ref{betadeltalb}) for the last inequality. We will now to combine this lower bound (\ref{s+1lowerbound}) with the upper bound (\ref{upperbound}). By \cref{binomratio}, we have
\begin{align}
    \binom{n - s - k}{k - 1} &\ge \binom{n - k - 1}{k-1} \left(1-\frac{(k-1)(s-1)}{n-2k+1}\right)
    \nonumber\\
    &\ge \binom{n - k - 1}{k-1}(1-2sp). \label{convertbinom}
\end{align}
Combining (\ref{s+1lowerbound}) and (\ref{upperbound}), and dividing through by $\binom{n-k-1}{k-1}$, we obtain 
\begin{align*}
&\phantom{\ge 1} \frac{s \lambda}{s+1} + 4sp \\
&\ge   (1-2sp) \left(\frac{n - s - 2k + 1}{n - s - k + 1} \left( (\lambda - s) \frac{n - s - k + 1}{n - s} + s\right)- s\sqrt{2p} - \zeta - 4p^2\right)\\
&\ge (1 - 2sp) \left((1-2p) \left((\lambda - s) (1-2p) + s\right) - s \sqrt{2p} - \zeta - 4p^2\right)\\
&\ge (1-(2s+4)p)(\lambda - s) + (1-(2s+2)p) s- s \sqrt{2p}  - \zeta - 4p^2.
\end{align*}
This inequality can be rewritten as 
\begin{align*}&\phantom{\ge 1}\left(1 - \frac{1}{s+1}\right) (\lambda - s) + \left(1 - \frac{1}{s+1}\right) s + 4sp \\ 
&\ge (1-(2s+4)p)(\lambda - s) + (1-(2s+2)p) s- s \sqrt{2p}  - \zeta - 4p^2,
\end{align*}
so
\begin{align*}\zeta &\ge \left( \frac{1}{s+1} - (2s+4)p \right) (\lambda - s) + \left(\frac{1}{s+1} - (2s+2)p\right) s -s\sqrt{2p} - 4p^2 -  4sp.
\end{align*}
When $p \le 1/(100s^2),$
the above inequality implies that
\[\zeta \ge 0 + \frac{1}{1.5 (s+1)} \cdot s  - s\sqrt{2p} - 4p^2 - 4sp > \frac{s}{2(s+1)} \ge \frac{1}{4}.\]
Now apply \cref{gammamax} to the family $\cat B \subseteq \binom{[n]\setminus [s]}{k}$. This lemma says there is some $x \in [n] \setminus [s]$ such that $\delta':=\left.\abs{\cat B_{\{x\}}^{\{x\}}} \middle/ \binom{n-s-1}{k-1}\right.$ satisfies
\[\frac{1}{64} \le \zeta^3 \le \left(\frac{n-s-1}{n-s-k}\right)^3 \delta'^2 + 3 \left(\frac{n-s-1}{n-s-k}\right)^2 \zeta \beta.\]
Using $\beta \le 2sp$,  $k/(n-s) \le 2p$, and $p \le 1/(100s^2)$, 
    we may conclude that the density of $\cat B_{\{x\}}^{\{x\}} = \cat F_{[s] \cup \{x\}}^{\{x\}}$ in $\binom{[n] \setminus ([s] \cup \{x\})}{k-1}$ is $\delta' \ge 1/10$, so by \cref{convert}, $\cat F_{\{x\}}^{\{x\}}$ has density $\delta \ge \delta' (1-2sp) > 1/20$. By \cref{extlem3}, we have
\[\abs{\cat F_{\{x\}}^{\{x\}}} \ge \binom{n-1}{k-1}\left(\frac{\lambda}{s+1} - \sqrt{40(s+1)p}  - (s^2 +4s)p \right),\]
as required. 
\end{proof}
\begin{remark}
    If $n$ is sufficiently large, then it is not possible to have $s+2$ such elements because $\lambda (s+2) / (s+1) > \lambda$ and by inclusion--exclusion $\cat F$ would be too large.
\end{remark}
\subsection{Calculating the maximum degree}
Now we have a structural description that most of $\cat F$ can be covered by $s+1$ stars. This allows us to calculate the maximum degree.

\begin{theorem}
\label{lowerbound}
If $n \ge 10000s^2 k$, $\cat F \subseteq \binom{[n]}{k}$ has size parameter $\lambda \in [s, s+1]$, and $\cat F$ is not a union of $s$ stars, then 
\[\Delta(\cat F) \ge \binom{n - k - 1}{k-1}  \left(\frac{s\lambda}{s+1} - 11\left(\sqrt{s^3 p} + s^3 p\right)\right).\]
\end{theorem}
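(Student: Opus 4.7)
The plan is to reduce to a minimizer of $\Delta(\cat F)$ over all families of size parameter $\lambda$ that are not unions of $s$ stars; any non-minimizer has larger maximum degree, so a bound for the minimizer suffices. Setting $p := k/n$, \cref{manylem3} then yields $s+1$ ``popular'' stars, which after relabeling we take to be $\cat D_1,\dots,\cat D_{s+1}$; each satisfies
\[ \gamma_i := \abs{\cat F^{\{i\}}_{\{i\}}}/\binom{n-1}{k-1} \;\ge\; \frac{\lambda}{s+1} - \sqrt{40(s+1)p} - (s^2+4s)p. \]
Passing from $\gamma_i$ to $\tilde\gamma_i := \abs{\cat F^{\{i\}}_{[s+1]}}/\binom{n-s-1}{k-1}$ via \cref{convert} costs an additive $sp$, so each $\tilde\gamma_i \ge \lambda/(s+1) - \epsilon$ where $\epsilon := \sqrt{40(s+1)p} + (s^2+5s)p$, which is comfortably below $1/6$ under the hypothesis $n \ge 10000 s^5 k$.

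Next, I would lower-bound $\Delta(\cat F)$ by the average degree over the ``simple'' vertices $A \in \cat F$ with $A \cap [s+1] = \{1\}$. Writing $\cat F_i := \cat F^{\{i\}}_{[s+1]}$, each such $A$ corresponds to $A' := A \setminus \{1\} \in \cat F_1 \subseteq \binom{[n]\setminus[s+1]}{k-1}$. For any $j \in \{2,\dots,s+1\}$ and any $B' \in \cat F_j$ with $B' \cap A' = \varnothing$, the set $B := \{j\} \cup B'$ is a $\cat F$-neighbour of $A$. Hence
\[ \sum_{A \in \cat F^{\{1\}}_{[s+1]}} d_A \;\ge\; \sum_{j=2}^{s+1} e(\cat F_1, \cat F_j), \]
where $e(\cat F_1,\cat F_j)$ counts ordered disjoint pairs in the Kneser graph $K(n-s-1,k-1)$. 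Its eigenvalues are controlled by the proof of \cref{singvalkneser}: the principal eigenvalue is $\binom{n-s-k}{k-1}$ and the spectral-gap ratio is at most $(k-1)/(n-s-k) \le 2p$.

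Applying the Expander Mixing Lemma to each term yields
\[ e(\cat F_1,\cat F_j) \;\ge\; \tilde\gamma_1\tilde\gamma_j \binom{n-s-1}{k-1}\binom{n-s-k}{k-1} - \binom{n-s-k-1}{k-2}\sqrt{\abs{\cat F_1}\abs{\cat F_j}}. \]
Dividing by $\abs{\cat F_1}$, summing over $j\in\{2,\dots,s+1\}$, and inserting $\tilde\gamma_j \ge \lambda/(s+1) - \epsilon$ together with $\tilde\gamma_1 \ge 1/3$, the average degree of $A \in \cat F^{\{1\}}_{[s+1]}$ is at least
\[ \binom{n-s-k}{k-1}\left(\frac{s\lambda}{s+1} - s\epsilon - O(sp)\right). \]
Since $\Delta(\cat F)$ is at least this average degree, converting $\binom{n-s-k}{k-1}$ to $\binom{n-k-1}{k-1}$ via (\ref{convertbinom}) completes the argument. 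The main obstacle is purely book-keeping: one must track the shortfall $s\epsilon = \Theta(\sqrt{s^3 p} + s^3 p)$, the $\Theta(sp)$ Expander-Mixing correction, and the $\Theta(s^2 p)$ loss from the binomial conversion, and verify that their total is absorbed by $11(\sqrt{s^3 p} + s^3 p)$ under $n \ge 10000 s^2 k$.
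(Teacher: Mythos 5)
Your proposal follows the paper's proof quite closely: reduce to a minimizer, invoke \cref{manylem3} to obtain $s+1$ popular stars, pass to $\widetilde\gamma_i$ via \cref{convert}, lower-bound $\Delta(\cat F)$ by the average over one of the stars of the cross-edge count into the other $s$ stars, and apply the Expander Mixing Lemma. The decomposition of $\cat F$ into the families $\cat F^{\{i\}}_{[s+1]}$ is exactly what the paper does (there $\cat B=\cat F^{\{s+1\}}_{[s+1]}$ plays the role of your $\cat F_1$).

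The one genuine divergence is which graph you apply the Expander Mixing Lemma in. The paper embeds $\cat B$ back into $\binom{[n]\setminus[s]}{k}$ by re-adding the element $s+1$, and then uses the \emph{bipartite} Kneser graph between $\binom{[n]\setminus[s]}{k}$ and $\binom{[n]\setminus[s]}{k-1}$ (via \cref{kneserexpander} and inequality (\ref{BtoCi})). In that formulation the density $\beta$ of $\cat B$ in the ambient $k$-sets is only $\Theta(p)$, and the EML error picks up a $\sqrt{1/\beta}=\Theta(p^{-1/2})$ factor, giving an error of order $s\sqrt{p}$ per summand. You instead keep $\cat F_1,\ldots,\cat F_{s+1}$ as subfamilies of $\binom{[n]\setminus[s+1]}{k-1}$ and apply EML directly in the (non-bipartite) Kneser graph $K(n-s-1,k-1)$, where both families have density $\Theta(1)$, so the error per summand is only $\Theta(p)$. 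This is actually slightly cleaner, though it makes no difference to the final bound since the dominant $\Theta(\sqrt{s^3p})$ error is inherited from \cref{manylem3}. (You do need, and correctly observe, that $\widetilde\gamma_1\ge 1/3$, which follows from $\epsilon<1/6$; note that the hypothesis here is $n\ge 10000s^2k$, not $10000s^5k$ as you wrote, but your estimate of $\epsilon$ still holds under the weaker hypothesis.) The remaining book-keeping is of the same character as the paper's; one should be careful that the accumulated error, accounted for as you set it up, may give a constant slightly larger than $11$ at $s=1$, but this is easily repaired with tighter bounds as in the paper.
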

\begin{proof} 
Let $\cat F$ be a family that minimizes the maximum degree, subject to the conditions of the theorem. We may assume that $1, 2, \ldots, s+1$ are the $s+1$ elements given by \cref{manylem3}. So for $i = 1, \ldots, s+1$, we have
\[
\gamma_i = \left.\abs{\cat F_{\{i\}}^{\{i\}}}\middle/\binom{n-1}{k-1} \right.\ge \frac{\lambda}{s+1} - \sqrt{40(s+1)p}  - (s^2+4s) p .
\]
Now let $\cat B = \cat F_{[s+1]}^{\{s+1\}}$ and $\cat C_{\{i\}} = \cat F^{\{i\}}_{[s+1]}$. (Note that this contrasts with the proof of \cref{manylem3}, where we defined $\cat C_{\{i\}}$ to be $\cat F^{\{i\}}_{[s]}$.) \cref{convert} gives
\[\widetilde\gamma_i := \left.\abs{\cat F_{[s+1]}^{\{i\}}}\middle/\binom{n-s-1}{k-1} \right. > \gamma_i - sp.\]
So, when viewed as a subset of $\binom{[n] \setminus [s]}{k-1}$,
the density of $\cat C_{\{i\}}$ is 
\begin{equation}\delta_i := \left.\abs{\cat F_{[s+1]}^{\{i\}}}\middle/\binom{n-s}{k-1}\right. 
= \frac{n-s-k+1}{n-s} \widetilde\gamma_i > (1-2p) (\gamma_i - sp) >\gamma_i - (s+2)p. \label{ineq:delta}\end{equation}
Also, we have
\[\abs{\cat B} > (\gamma_{s+1} - sp)\binom{n - s - 1}{k - 1} = (\gamma_{s + 1} - sp)\frac{k}{n-s} \binom{n - s}{k}.\]
Using $\frac{\lambda}{s+1} \ge \frac{s}{s+1} \ge \frac{1}{2}$, the density of $\cat B$ in $\binom{[n]\setminus [s]}{k}$ is 
\begin{equation}\beta > (\gamma_{s + 1} - sp)\frac{k}{n-s} \ge \frac{1}{3} \cdot \frac{k}{n-s} \ge \frac{k}{n - k - s + 1} \cdot \frac{1}{4}. \label{betalb2}\end{equation}
(Technically, $\cat B \subseteq \binom{[n]\setminus [s+1]}{k-1}$, but the latter injects into $\binom{[n]\setminus [s]}{k}$ by adding the element $s+1$ back into every set.) Now we apply (\ref{BtoCi}) to 
$\cat B \subseteq \binom{[n]\setminus [s+1]}{k-1} \hookrightarrow \binom{[n]\setminus [s]}{k}$ (noting that $\cat B$ has density $\beta$ in the latter set) and $\cat C_{\{i\}} \subseteq \binom{[n]\setminus [s]}{k-1}$ (noting that $\cat C_{\{i\}}$ has density $\delta_i$ in the latter set), obtaining:
\begin{align*}
    &\phantom{\ge .}\Delta(\cat F) \\
    &\ge \frac{\sum_{i = 1}^s e(\cat B, \cat C_{\{i\}})  }{\abs{\cat B}} \\
    &\ge  \binom{n - s - k}{k - 1} 
\sum_{i = 1}^s \left(\delta_i - \frac{k}{n - s - k + 1} \sqrt{\frac{\delta_i (1-\delta_i)}{\beta}}\right) 
 \\
&\ge \binom{n - s - k}{k - 1} 
\sum_{i = 1}^s \left(\delta_i - \sqrt{\frac{k}{n - s - k + 1}} \right) \hspace{5em} \text{using }\delta_i (1-\delta_i) \le 1/4 \text{ and (\ref{betalb2})}\\
&\ge \binom{n - s - k}{k - 1} \sum_{i = 1}^s \left(  (\gamma_i - (s+2)p) - \sqrt{2p}\right) \hspace{4.5em} \text{ by (\ref{ineq:delta})}\\
&\ge \binom{n - k - 1}{k - 1} (1-2sp) \left(\frac{s\lambda}{s+1} - s\sqrt{40(s+1)p}  - (s^3+4s^2) p  - s(s+2) p - s \sqrt{2p}\right) \\
& \hspace{24em} \text{ by (\ref{convertbinom})} \\
& \ge \binom{n - k - 1}{k-1}  \left(\frac{s\lambda}{s+1} - s\sqrt{40(s+1)p}  - (s^3+9s^2) p- s \sqrt{2p}\right)
\\
& \hspace{2em} \text{since the last factor on previous line is } \leq s, \text{ and if } x \le s \text{ then } (1-2sp) x \le x - 2s^2 p\\
&= \binom{n - k - 1}{k-1}  \left(\frac{s\lambda}{s+1} - 11\left(\sqrt{s^3 p} + s^3 p\right) \right).  
\end{align*}

\end{proof}

\begin{remark}
    
    If we want to see the ``jump'' behaviour as in \cref{roughplot} (with a jump of $\varepsilon$, say) from \cref{lowerbound}, then we need
    
    \[s^2 / (s+1) - 11\left(\sqrt{s^3 p} + s^3 p\right) > s-1 + \varepsilon > s - 1.\]
    (The left hand side corresponds to the lower bound from \cref{lowerbound} when $\lambda > s$, and the right-hand side corresponds to the case where $\cat F$ is a union of $s$ stars.) Hence, the error $O\left(\sqrt{s^3 p} + s^3 p\right)$ should not exceed $c/(s+1)$ for some small constant $c < 1$, so $p \leq c's^{-5}$ (for some small absolute constant $c'>0$) is necessary.
\end{remark}
\begin{remark}
    Note that the $O(s^3 p)$ part of the error term in the statement of the previous theorem resulted from our use of inclusion-exclusion, whereas the dominating $O\left(\sqrt{s^3 p}\right)$ part of the error term came from our use of the Expander Mixing Lemma.
\end{remark}
We obtain the following.
\begin{corollary}
\label{equalitycase}
If $n \ge 10000 s^5 k$, $\cat F \subset {[n] \choose k}$ has size parameter $\lambda = s$, and $\cat F$ minimizes the maximum degree over all induced subgraphs of $K(n,k)$ of the same order, then $\cat F$ is a union of $s$ stars.
\end{corollary}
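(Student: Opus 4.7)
The plan is to argue by contradiction using \cref{lowerbound} together with an easy direct calculation of the maximum degree of a union of $s$ stars.

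First, I would compute the maximum degree of a union of $s$ stars $\cat U = \cat D_{i_1} \cup \cdots \cup \cat D_{i_s}$. Any $A \in \cat U$ which lies in only one star (say $A \cap \{i_1,\dots,i_s\} = \{i_1\}$) has precisely $(s-1)\binom{n-k-1}{k-1}$ neighbours in $\cat U$, namely the sets that contain one of $i_2,\dots,i_s$ and are disjoint from $A$; any $A$ lying in two or more of the stars has strictly fewer neighbours in $\cat U$. Hence
\[
\Delta(\cat U) = (s-1)\binom{n-k-1}{k-1}.
\]
Since a union of $s$ stars is a family of size parameter $\lambda = s$, this gives an upper bound on the minimum of $\Delta(\cat F)$ over all $\cat F$ of size parameter $s$.

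Next, suppose for contradiction that $\cat F$ is a minimizer of size parameter $s$ which is \emph{not} a union of $s$ stars. Then \cref{lowerbound} applies with $\lambda=s$ and yields
\[
\Delta(\cat F) \ge \binom{n-k-1}{k-1}\left(\frac{s^2}{s+1} - 11\bigl(\sqrt{s^3p}+s^3p\bigr)\right),
\]
where $p = k/n \le 1/(10000\,s^5)$. Writing $s^2/(s+1) = (s-1) + 1/(s+1)$, minimality of $\cat F$ combined with the upper bound above forces
\[
\frac{1}{s+1} \le 11\bigl(\sqrt{s^3p}+s^3p\bigr).
\]

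The final step is to check that this inequality fails under our hypothesis $p \le 1/(10000\,s^5)$. Plugging in, $\sqrt{s^3p}\le 1/(100s)$ and $s^3p \le 1/(10000\,s^2)$, so
\[
11\bigl(\sqrt{s^3p}+s^3p\bigr) \le \frac{11}{100s} + \frac{11}{10000\,s^2} \le \frac{3}{25s} < \frac{1}{s+1}
\]
for all $s \ge 1$, a contradiction. Hence any minimizer of size parameter $s$ must be a union of $s$ stars. The argument is essentially routine once \cref{lowerbound} is in hand; the only thing to be slightly careful about is the exact constant, so the main (very mild) obstacle is verifying that the constant $10000$ in the hypothesis on $n$ really does beat the constant $11$ appearing in the error term of \cref{lowerbound}.
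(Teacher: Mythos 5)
Your approach is the same as the paper's: compute (or bound) the maximum degree of a union of $s$ stars, then invoke Theorem~\ref{lowerbound} with $\lambda=s$ and check numerically that the lower bound beats this under $p\le 1/(10000s^5)$. The argument is sound, but your opening claim is wrong as an \emph{equality}: a vertex $A$ with $A\cap\{i_1,\dots,i_s\}=\{i_1\}$ has
\[
\binom{n-k}{k}-\binom{n-k-s+1}{k}=\sum_{j=1}^{s-1}\binom{n-k-j}{k-1}
\]
neighbours in $\cat U$, not $(s-1)\binom{n-k-1}{k-1}$; your count overcounts sets containing more than one of $i_2,\dots,i_s$, and the two quantities differ already for $s=3$. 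Since the true value is strictly \emph{smaller}, what you wrote is still a valid upper bound on $\Delta(\cat U)$, so the contradiction goes through unchanged; just replace ``has precisely'' with ``has at most''. The paper instead records the exact degree $\binom{n-k}{k}-\binom{n-k-s+1}{k}$ and compares it with $(s-1)\binom{n-k-1}{k-1}$ term by term via the Vandermonde expansion~\eqref{eq:venn}, which is the same argument spelled out more carefully.
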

\begin{proof}
    A union of $s$ stars has maximum degree:
    \begin{align} 
    \label{eq:starsmax}
    &\phantom{= } \binom{n-k}{k} - \binom{n-k-s+1}{k} \nonumber \\ 
    &= (s-1) \binom{n-k-s+1}{k-1} + \sum_{i=2}^{s-1}\binom{s-1}{i}\binom{n-k-s+1}{k-i}.\end{align}
If $\cat F$ is not a union of $s$ stars, then \cref{lowerbound} says it has maximum degree at least
    \begin{align*}&\phantom{> }\binom{n - k - 1}{k-1}  \left(\frac{s^2}{s+1} -  11\left(\sqrt{s^3 p} + s^3 p\right)\right) \\
    &>  \binom{n-k-1}{k-1} (s-1)\\
    &= (s-1) \sum_{i=1}^{s-1}\binom{s-2}{i-1} \binom{n-k-s+1}{k-i},\end{align*}
    which is at least as large as (\ref{eq:starsmax}), since
    $$ (s-1){s-2 \choose i-1} \geq {s-1 \choose i}\quad \forall i \geq 1.$$
\end{proof}

\section{Relation to the Erd\H{o}s matching conjecture}
\label{reltoEMC}
Recall that a {\em matching} is a family of pairwise disjoint sets. Recall the following well-known problem, posed originally by Erd\H{o}s.
\begin{question}[Erd\H{o}s matching problem, \cite{EMCorig}]
    \label{EMC}
    Consider all $\cat F \subseteq \binom{[n]}{k}$ of size $\abs{\cat F} = m$. What is the minimum value of $\omega(K(n,k)[\cat F])$? That is, for each triple $(n,k,m) \in \mathbb{N}^3$, what is the minimum possible size of the largest matching in $\mathcal{F}$, over all $m$-element subsets of ${[n] \choose k}$?
    \end{question}

    For $s \in \mathbb{N}$, there are two natural constructions of large families $\cat F$ with $\omega(K(n,k)[\cat F]) \le s$.
    One family $\cat A$ is a union of $s$ stars:
    $$\mathcal{A} = \cat D_1 \cup \cat D_2 \cup \cdots \cup \cat D_s = \left\{A \in {[n] \choose k}:\ A \cap [s] \neq \varnothing\right\}.$$
    Among any $s+1$ sets in $\cat A$, two must be from the same one of the $s$ stars, so $s+1$ sets in $\mathcal{A}$ cannot be pairwise disjoint. This family has
    \[\abs{\cat A} =  \binom{n}{k} - \binom{n-s}{k}.\]
     Another construction is to take $\mathcal{B} = {[k(s+1)-1] \choose k}$, i.e.\ the family of all $k$-element subsets of $[k(s+1)-1]$; this has
    \[\abs{\cat B} = \binom{k(s+1)-1}{k},\]
    and clearly it is not possible for $(s+1)$ pairwise disjoint subsets of size $k$ to fit inside $[k(s+1)-1]$.
    The celebrated {\em Erd\H{o}s Matching Conjecture} says that, if $n \ge k(s+1)$, and $\cat F \subseteq \binom{[n]}{k}$ has size $\abs{\cat F} > \max \{\abs{\cat A}, \abs{\cat B}\}$, then 
    \[\omega(K(n, k)[\cat F]) \ge s+1.\]
This conjecture remains open, though several partial results are known. Erd\H{o}s himself proved the conjecture
for all $n$ sufficiently large, i.e.\ for $n \geq n_0(k,s)$. The bound on $n_0(k,s)$ was lowered in several works: Bollob\'as, Daykin and Erd\H{o}s~\cite{BDE76} showed
that $n_0(k, s) \leq 2sk^3$; Huang, Loh and Sudakov~\cite{HLS12} showed that $n_0(k,s) \leq 3sk^2$, and Frankl and F\"{u}redi (unpublished)
showed that $n_0(k, s) \leq cks^2$. A breakthrough was achieved by Frankl~\cite{Frankl13} in 2013; Frankl showed that $n_0(k,s) \leq (2s+1)k-s+1$. This was then further sharpened by Frankl and Kupavskii \cite{erdosmatchingconj}; they proved the following.
\begin{theorem}[Frankl--Kupavskii, Theorem 1 in \cite{erdosmatchingconj}]
There exists $s_0 \in \mathbb{N}$ such that for all $s \in \mathbb{N}$ with $s \geq s_0$, all $n \in \mathbb{N}$ with $n \geq \frac{5}{3} sk - \frac{2}{3} s$, and all $\cat F \subseteq \binom{[n]}{k}$,
\[\abs{\cat F} > \abs{\cat A} \implies 
    \omega(K(n, k)[\cat F]) \ge s+1.\]
\end{theorem}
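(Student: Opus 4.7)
My plan is to argue by contradiction: assume $\cat F \subseteq \binom{[n]}{k}$ has $|\cat F| > |\cat A|$ yet $\omega(K(n,k)[\cat F]) \le s$. The first step is to apply the standard shifting operators $S_{ij}$ (for $1 \le i < j \le n$) until $\cat F$ is shifted; shifting preserves $|\cat F|$ and cannot create a matching where none existed, so we may assume $\cat F$ is invariant under all $S_{ij}$. In particular the densities $\gamma_i := |\cat F^{\{i\}}_{\{i\}}|/\binom{n-1}{k-1}$ are non-increasing in $i$, and $\cat F$ enjoys the \emph{downward replacement} property: if $F \in \cat F$, $a \in F$ and $b < a$ with $b \notin F$, then $(F \setminus \{a\}) \cup \{b\} \in \cat F$.

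The core of the argument is a structural dichotomy driven by the ``leakage''
\[
\cat L := \{F \in \cat F : F \cap [s] = \varnothing\} = \cat F \setminus \cat A.
\]
If $\cat L = \varnothing$ then $\cat F \subseteq \cat A$ and $|\cat F| \le |\cat A|$, immediate contradiction. Otherwise, for each $L \in \cat L$, the downward replacement property should be used to produce an explicit family of sets meeting $[s]$ which are forced to be \emph{absent} from $\cat F$ (because their presence alongside $L$ would yield an $(s+1)$-matching). After careful bookkeeping this should deliver an exchange inequality of the shape
\[
|\cat F \cap \cat A| \;\le\; |\cat A| - \varphi(n,k,s)\,|\cat L|,
\]
with $\varphi(n,k,s) \to 1$ precisely as $n/(sk)$ crosses $5/3$. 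Adding $|\cat L|$ to both sides contradicts $|\cat F| > |\cat A|$ whenever $\varphi \ge 1$.

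The main obstacle, and the source of the precise constant $5/3$, is establishing the exchange rate $\varphi$ quantitatively. The threshold $n = \tfrac{5}{3}sk$ is essentially where $|\cat A|$ overtakes $|\cat B| = \binom{k(s+1)-1}{k}$; below it, $\cat B$ itself refutes the conclusion, so any proof must exploit a quantity that changes sign exactly at this threshold. I would therefore expect Frankl and Kupavskii's tool of choice to be a random-walk or peeling argument on shifted pairs, comparing the profile of couples $(L, F_L) \in \cat L \times (\cat A \setminus \cat F)$ along the natural shifting order to extract the sharp constant; cruder geometric bounds (e.g.\ counting neighbours of $L$ in $\cat A$ directly) would give a constant like $2$ or $3$ in place of $5/3$. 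Finally, the hypothesis $s \ge s_0$ is there to absorb lower-order inclusion--exclusion error terms (the contributions of order $\binom{s}{i}\binom{n-s}{k-i}$ for $i \ge 2$) whose weight relative to the main term is $O(s^2 k/n)$ and so only becomes negligible once $s$ is large; in the absence of such a cushion, the exchange argument would fail to close at small $s$.
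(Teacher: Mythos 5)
This statement is not proved in the paper: it is quoted verbatim as a known result and attributed to Frankl and Kupavskii \cite{erdosmatchingconj}, so there is no ``paper's own proof'' to compare against. Judged on its own terms, your proposal is a plan rather than a proof, and the plan stops exactly at the point where the theorem's actual content lies.

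Concretely, the shifting step, the reduction to a shifted family, and the introduction of the ``leakage'' family $\cat L = \cat F \setminus \cat A$ are all standard and correct as far as they go. But the entire difficulty is the exchange inequality $|\cat F \cap \cat A| \le |\cat A| - \varphi(n,k,s)|\cat L|$: you state that ``after careful bookkeeping this should deliver'' it and that $\varphi \to 1$ as $n/(sk)$ crosses $5/3$, and you yourself call establishing $\varphi$ quantitatively ``the main obstacle,'' but you never do it. Without that step there is no proof, only a target. The asserted claim that $n = \tfrac{5}{3}sk$ is ``essentially where $|\cat A|$ overtakes $|\cat B|$'' is also not a justification: the crossover of $|\cat A|$ and $|\cat B|$ gives a natural \emph{lower} barrier for any such theorem, but it does not by itself produce the exchange rate, and the conjectural threshold for the Erd\H{o}s matching problem is $n \ge (s+1)k$, strictly below $\tfrac{5}{3}sk$, so $5/3$ is not the crossover constant but rather the best bound Frankl and Kupavskii could reach.

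Finally, even as a high-level roadmap your sketch likely diverges from the cited argument. The Frankl--Kupavskii paper is titled ``The Erd\H{o}s Matching Conjecture and concentration inequalities,'' and their improvement to $\tfrac{5}{3}sk - \tfrac{2}{3}s$ for $s \ge s_0$ rests on a probabilistic concentration argument layered on top of shifting, not on a purely combinatorial ``peeling of shifted pairs'' as you propose. Your guess about the role of $s \ge s_0$ (absorbing inclusion--exclusion error terms of relative size $O(s^2 k/n)$) is plausible in spirit but is stated without derivation. In short: the framing is reasonable, but the proposal has a genuine gap at its centre --- the quantitative exchange inequality --- and the intended mechanism for closing that gap does not match the concentration-inequality approach of the source.
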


A good lower bound for the $\lambda > 2$ case in the maximum degree problem implies the corresponding instance of the Erd\H{o}s Matching Conjecture with $s = 2$. Similarly, the result for larger $\lambda$ implies the Erd\H{o}s Matching Conjecture for a corresponding range of parameters, as we proceed to outline in the following.

\begin{corollary}
    If the triple $(n, k, s) \in \mathbb{N}^3$ satisfies $n \ge 10000s^5 k$, then the Erd\H{o}s Matching Conjecture holds for this triple --- that is, for any $\cat F\subseteq \binom{[n]}{k}$ with size greater than the union of $s$ stars, $\cat F$ has a matching of size $s+1$, i.e.\ $K(n,k)[\mathcal{F}]$ contains a clique of size $s+1$.
\end{corollary}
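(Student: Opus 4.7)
The plan is to induct on $s$, using \cref{lowerbound} (equivalently \cref{thm:main}) at each step to locate a vertex $A\in\cat F$ whose induced neighbourhood is large enough for the $(s-1)$-instance of the corollary to apply. The base case $s=0$ is trivial: any non-empty family contains a matching of size $1$.

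For the inductive step, fix $(n,k,s)$ with $n\geq 10000s^5k$ and let $\cat F\subseteq\binom{[n]}{k}$ with $|\cat F|>\binom{n}{k}-\binom{n-s}{k}$. After discarding sets if necessary, we may assume $\cat F$ has size parameter $\lambda\in(s,s+1]$, so $\cat F$ is not a union of $s$ stars and \cref{lowerbound} yields a vertex $A\in\cat F$ whose degree in $K(n,k)[\cat F]$ is at least
\[\binom{n-k-1}{k-1}\left(\frac{s^2}{s+1}-11\left(\sqrt{s^3p}+s^3p\right)\right),\qquad p=k/n.\]
A union of $s-1$ stars inside $\binom{[n]\setminus A}{k}$ has size $\binom{n-k}{k}-\binom{n-k-s+1}{k}=\sum_{i=1}^{s-1}\binom{n-k-i}{k-1}\leq(s-1)\binom{n-k-1}{k-1}$, so it suffices to verify
\[\frac{s^2}{s+1}-11\left(\sqrt{s^3p}+s^3p\right)>s-1,\qquad\text{i.e.,}\qquad 11\left(\sqrt{s^3p}+s^3p\right)<\frac{1}{s+1}.\]
Granted this inequality, $\cat F':=\{B\in\cat F:B\cap A=\varnothing\}\subseteq\binom{[n]\setminus A}{k}$ has $|\cat F'|$ strictly greater than the size of any union of $s-1$ stars in $\binom{[n]\setminus A}{k}$; since $n-k\geq 10000(s-1)^5k$ (using the trivial bound $s^5-(s-1)^5\geq 1$), the inductive hypothesis produces a matching of size $s$ inside $\cat F'$, and adjoining $A$ yields the desired matching of size $s+1$ in $\cat F$.

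The only real obstacle is the final numerical step: the margin $\frac{s^2}{s+1}-(s-1)=\frac{1}{s+1}$ is narrow, so the error term from \cref{lowerbound} must be strictly smaller than $\frac{1}{s+1}$. Under the hypothesis $p\leq 1/(10000s^5)$ we have $s^3p\leq 1$, hence $11(\sqrt{s^3p}+s^3p)\leq 22\sqrt{s^3p}$, and the required bound reduces to $484(s+1)^2s^3p<1$, which is immediate from the hypothesis. This calibration is exactly why the corollary needs $n\geq 10000s^5k$ rather than merely $n=\Omega(k)$.
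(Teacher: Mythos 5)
Your proof is correct and takes essentially the same approach as the paper: iterate Theorem~\ref{lowerbound} (or Theorem~\ref{thm:main}) to find a high-degree vertex, pass to its neighbourhood viewed inside $\binom{[n-k]}{k}$, check the numerical bound $\frac{s^2}{s+1}-11(\sqrt{s^3p}+s^3p)>s-1$, and descend; you simply package the paper's iteration as an explicit induction on $s$ (with the same verification $n-k\geq 10000(s-1)^5k$).
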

\begin{proof} 
    Let $(n, k, s) \in \mathbb{N}^3$ be such that $n \geq 10000s^5k$, and let $\cat F \subset {[n] \choose k}$ such that $|\mathcal{F}| > {n \choose k}-{n-s \choose k}$. We need to show $\cat F$ has a matching of size $s+1$. We already know that the maximum degree of $K(n,k)[\mathcal{F}]$ is large, by \cref{lowerbound}. We shall pick out a vertex of maximum degree and focus on its neighbourhood, and then repeat this process. If we can do this $s$ times, and the remaining neighbourhood (adjacent to all $s$ chosen vertices) is still non-empty, then we have an $(s+1)$-clique in $K(n,k)[\mathcal{F}]$, and we are done.

    Since $\cat F$ has size parameter $\lambda > s$, \cref{lowerbound} yields
    \[\Delta (K(n,k)[\cat F])  \ge \binom{n - k - 1}{k-1}  \left(\frac{s^2}{s+1} -  11\left(\sqrt{s^3 p} + s^3 p\right)\right),\]
where, as before, $p=k/n$.

Let $v_1 \in \mathcal{F}$ be a vertex of degree $\Delta(K(n,k)[\cat F])$. Then the neighbourhood $\Gamma (v_1)$ is a subfamily of $ \binom{[n]\setminus v_1}{k}$, the latter isomorphic to $\binom{[n-k]}{k}$. Moreover the subset $\Gamma (v_1)$ has size $\Delta (K(n,k)[\cat F])$, which is greater than $(s-1)\binom{n-k-1}{k-1}$, since
$$\frac{s^2}{s+1} -  11\left(\sqrt{s^3 \tfrac{k}{n}} + s^3 \tfrac{k}{n}\right) > s-1,$$
as is easy to check. Hence, $\Gamma (v_1)$ has size greater than the union of $s-1$ stars in $\binom{[n-k]}{k}$. 

    It is clear that if $(n, k, s)$ satisfies the condition $n \geq 10000s^5k$, then $(n-k, k, s-1)$ also satisfies the same condition, so we can apply the same argument to the family $\cat F_1 := \Gamma(v_1) \subseteq \binom{[n]\setminus v_1}{k} \cong \binom{[n-k]}{k}$. Similarly, we can repeat this until we obtain
    $\cat F_s \subseteq \binom{[n]\setminus (v_1 \cup v_2 \cup \cdots\cup v_s)}{k} \cong \binom{[n-sk]}{k}$, which has size at least
    $$\binom{n - k - 1}{k-1}  \left(\frac{1}{2} -  11\left(\sqrt{\tfrac{k}{n}} + \tfrac{k}{n}\right)\right)>0,$$
    so we are done.
\end{proof}

This is of course far worse than the result in \cite{erdosmatchingconj}, which only requires $ n \ge 5sk/3 - 2s/3 $; we include it only to illustrate the connection between the two problems.

\end{document}